\newtheorem{lma}{Lemma}[section]
\newaliascnt{thmCt}{lma}
\newtheorem{thm}[thmCt]{Theorem}
\newaliascnt{corCt}{lma}
\newtheorem{cor}[corCt]{Corollary}
\newaliascnt{prpCt}{lma}
\newtheorem{prp}[prpCt]{Proposition}
\theoremstyle{definition}
\newaliascnt{pgrCt}{lma}
\newtheorem{pgr}[pgrCt]{}
\newaliascnt{dfnCt}{lma}
\newtheorem{dfn}[dfnCt]{Definition}
\newaliascnt{rmkCt}{lma}
\newtheorem{rmk}[rmkCt]{Remark}
\newaliascnt{exaCt}{lma}
\newtheorem{exa}[exaCt]{Example}
\newaliascnt{qstCt}{lma}
\newaliascnt{pbmCt}{lma}
\newtheorem{pbm}[pbmCt]{Problem}
\newaliascnt{cnjCt}{lma}
\newtheorem{cnj}[cnjCt]{Conjecture}
\def\today{\number\day\space\ifcase\month\or   January\or February\or
   March\or April\or May\or June\or   July\or August\or September\or
   October\or November\or December\fi\   \number\year}
\newcommand{\ZZ}{{\mathbb{Z}}}
\newcommand{\NN}{{\mathbb{N}}}
\newcommand{\CC}{{\mathbb{C}}}
\newcommand{\RR}{{\mathbb{R}}}
\newcommand{\TT}{{\mathbb{T}}}
\newcommand{\Bdd}{{\mathcal{B}}}
\newcommand{\Cpct}{{\mathcal{K}}}
\newcommand{\spec}{{\mathrm{sp}}}
\newcommand{\supp}{{\mathrm{supp}}}
\newcommand{\andSep}{\,\,\,\text{ and }\,\,\,}
\newcommand{\red}{{\mathrm{red}}}
\newcommand{\OP}{\mathcal{O}}
\newcommand{\ca}{$C^*$-algebra}
\DeclareMathOperator{\Proj}{Proj}
\DeclareMathOperator{\tr}{tr}
\newcounter{introCounter}
\newaliascnt{thmIntroCt}{introCounter}
\newtheorem{thmIntro}[thmIntroCt]{Theorem}
\newtheorem{prpIntro}[introCounter]{Proposition}
\newtheorem{corIntro}[introCounter]{Corollary}
\title{Diffuse traces and Haar unitaries}
\date{\today}
\author{Hannes Thiel}
\address{Hannes Thiel
Mathematisches Institut, Fachbereich Mathematik und Informatik der
Universit\"at M\"unster, Einsteinstrasse 62, 48149 M\"unster, Germany.}
\email{hannes.thiel@posteo.de}
\urladdr{www.hannesthiel.org}
\thanks{The author was partially supported by the Deutsche Forschungsgemeinschaft (DFG, German Research Foundation) under Germany's Excellence Strategy EXC 2044-390685587 (Mathematics M\"{u}nster: Dynamics-Geometry-Structure).
}
\subjclass[2010]%
{Primary
46L05, 
46L51; 
Secondary
46L09, 
46L30. 
}
\keywords{$C^*$-algebras, traces, Haar unitaries, diffuse functionals, reduced free products}
\date{\today}
\begin{document}

\begin{abstract}
We show that a tracial state on a unital \ca{} admits a Haar unitary if and only if it is diffuse, if and only if it does not dominate a tracial functional that factors through a finite-dimensional quotient.
It follows that a unital \ca{} has no finite-dimensional representations if and only if each of its tracial states admits a Haar unitary.

More generally, we study when nontracial states admit Haar unitaries.
In particular, we show that every state on a unital, simple, infinite-dimensional \ca{} admits a Haar unitary.

We obtain applications to the structure of reduced free products.
Notably, the tracial reduced free product of simple \ca{s} is always a simple \ca{} of stable rank one.
\end{abstract}

\maketitle

\section{Introduction}

Let $A$ be a unital \ca{} with a tracial state $\tau\colon A\to\CC$.
A unitary $u\in A$ is called a \emph{Haar unitary} (with respect to $\tau$) if $\tau(u^k)=0$ for $k\in\ZZ\setminus\{0\}$.
Equivalently, the sub-\ca{} $C^*(u)$ of $A$ generated by $u$ is isomorphic to $C(\TT)$ and the restriction of $\tau$ to $C^*(u)$ corresponds to the normalized Lebesgue measure on $\TT$.

Haar unitaries play important roles in various constructions and structure results in operator algebras, starting with the fundamental fact that they are natural generators of diffuse abelian *-subalgebras in separable, tracial von Neumann algebras.
Haar unitaries also naturally arise in reduced group \ca{s}, in group von Neumann algebras, and as generators of the irrational rotation \ca{s}. 

Popa's solution, \cite{Pop87CommutantModuloCompacts}, to the commutant modulo compact operators problem for general von Neumann algebras relies on the construction of suitable Haar unitaries in $\mathrm{II}_1$ factors.
This construction was later refined, \cite{Pop95FreeIndepSeq}, to prove the existence of free independent sequences of Haar unitaries for irreducible subfactors of $\mathrm{II}_1$ factors.
Haar unitaries also play a prominent role in Voiculescu's free probability theory, \cite{Dyk93FreeProdHyperfinite}, \cite{DykHaa01InvSubspCircularOps},
in particular as the unitaries in the polar decompositions of R-diagonal elements, \cite{NicSpe97RdiagonalPairs}, \cite[Section~15]{NicSpe06LecturesFreeProb}.

The main result of this paper characterizes when $\tau$ admits a Haar unitary:

\begin{thmIntro}[{\ref{prp:main}}]
\label{thmIntroMain}
The following are equivalent:
\begin{enumerate}
\item
$\tau$ is diffuse (the normal extension $\tau\colon A^{**}\to\CC$ vanishes on every minimal projection; see \autoref{pgr:diffuse})
\item
$\tau$ does not dominate a nonzero tracial functional that factors through a finite-dimensional quotient of $A$;
\item 
there exists a unital (maximal) abelian sub-\ca{} $C(X)\subseteq A$ such that $\tau$ induces a diffuse measure on $X$;
\item
there exists a Haar unitary in $A$ (with respect to $\tau$).
\end{enumerate}
\end{thmIntro}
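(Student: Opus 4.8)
The plan is to establish the implications $(1)\Leftrightarrow(2)$, $(4)\Rightarrow(2)$, $(2)\Rightarrow(3)$ and $(3)\Rightarrow(4)$, which together give all four equivalences. The genuine content sits in the construction $(2)\Rightarrow(3)$; the remaining steps are bookkeeping about minimal projections together with one measure-theoretic observation.

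For $(1)\Leftrightarrow(2)$ I would argue at the level of minimal projections of $A^{**}$. Every minimal projection $p$ is a rank-one projection inside a summand $B(H_\pi)\subseteq A^{**}$ attached to an irreducible representation $\pi$. If $\dim H_\pi=\infty$ then $A^{**}$ contains infinitely many mutually orthogonal rank-one projections Murray--von~Neumann equivalent to $p$, and traciality forces $N\tau(p)\le\tau(1)=1$ for every $N$, hence $\tau(p)=0$; so any atom of $\tau$ lives in a finite-dimensional summand. The central cover $z_\pi$ of such a finite-dimensional $\pi$ is central in $A^{**}$, so $\sigma:=\tau(z_\pi\,\cdot\,)$ is a tracial functional with $0\le\sigma\le\tau$ that factors through the finite-dimensional quotient $A^{**}z_\pi\cong M_n$, and it is nonzero exactly when $\tau(p)>0$. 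This gives $(\neg1)\Leftrightarrow(\neg2)$. For $(4)\Rightarrow(2)$ I use that a Haar unitary $u$ has distribution the normalized Lebesgue measure on $\TT$, which is atomless; were $\tau$ to dominate a nonzero tracial $\sigma$ factoring through some $\bigoplus_j M_{n_j}$, the distribution of $u$ under $\sigma$ would be supported on the finitely many eigenvalues of the matrices $\pi_j(u)$, and the domination $\mu_\sigma\le\mu_\tau$ would plant an atom inside the Lebesgue measure, a contradiction.

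For $(3)\Rightarrow(4)$ I would produce the Haar unitary inside the given $C(X)$. After replacing $C(X)$ by a separably generated subalgebra still carrying a diffuse measure (so that the relevant spectrum is metrizable), diffuseness of $\mu$ lets one find a self-adjoint $g\in C(X)$ whose distribution $g_\ast\mu$ on $\RR$ is atomless; this is the standard fact that a diffuse Radon measure on a compact metric space admits a continuous function with atomless push-forward. Its distribution function $F(t)=\mu(\{g\le t\})$ is then continuous, and the probability integral transform shows that $F(g)\in C(X)$ is uniformly distributed on $[0,1]$; hence $u:=\exp(2\pi i\,F(g))\in C(X)\subseteq A$ is a unitary whose distribution is the Haar measure on $\TT$, so $\tau(u^k)=0$ for all $k\in\ZZ\setminus\{0\}$.

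The heart of the argument is $(2)\Rightarrow(3)$, and this is where I expect the main obstacle. Passing to the GNS representation, $M:=\pi_\tau(A)''$ is a tracial von Neumann algebra onto which $\tau$ factors, and using $(1)$ one checks that $M$ is diffuse, so $M$ contains a self-adjoint element with atomless distribution (equivalently a Haar unitary). The difficulty is that this element lives in the von Neumann algebra, not in $A$, and the naive Kaplansky approximation delivers only \emph{approximately} Haar unitaries: the mass of the largest atom of a distribution is merely upper semicontinuous in the self-adjoint element (the self-adjoint contractions whose distribution carries an atom of mass $\ge\varepsilon$ form a norm-closed set, by the portmanteau theorem), so strong-$\ast$ approximation cannot be used to \emph{decrease} atoms. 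To obtain an exact atomless element in $A$ I would prove a perturbation lemma: if $a=a^\ast\in A$ has an atom at $t_0$, with spectral projection $q=\chi_{\{t_0\}}(a)\in A^{**}$ of positive trace, then $q$ is not minimal in $A^{**}$, since diffuseness makes $\tau$ vanish on minimal projections; hence $qA^{**}q\ne\CC q$, and as $qAq$ is weak-$\ast$ dense in $qA^{**}q$ there is a self-adjoint $b\in A$ with $qbq\notin\CC q$. Cutting down by a narrow bump $f(a)\approx q$ supported near $t_0$, the element $a+s\,f(a)\,b\,f(a)\in A$ perturbs $a$ essentially inside the eigenspace of $t_0$ and, because $qbq$ is non-scalar, spreads the eigenvalue $t_0$ over a genuine interval, strictly reducing the atom while moving $a$ by an arbitrarily small amount. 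The remaining---and most delicate---task is to iterate this reduction over the atoms in decreasing order of mass with summable perturbations, arranging the bookkeeping so that the norm-limit lies in $A$ and has no atoms at all; reconciling the upper semicontinuity above with the demand that every atom be annihilated in the limit is the crux of the whole proof. Feeding the resulting atomless self-adjoint (equivalently the abelian subalgebra it generates, which may be enlarged to a maximal abelian one) back through $(3)\Rightarrow(4)$ closes the cycle.
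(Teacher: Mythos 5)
Your outer implications are essentially sound: the argument for (1)$\Leftrightarrow$(2) via central covers of minimal projections is the same as the paper's Lemma~\ref{prp:charDiffuseTrace}; your direct measure-domination argument for (4)$\Rightarrow$(2) is a legitimate alternative to the paper's route through Lemma~\ref{prp:diffuseTrace-fullSubalg}; and (3)$\Rightarrow$(4) is the standard commutative fact, cited in the paper as \cite[Proposition~4.1(i)]{DykHaaRor97SRFreeProd} (though your reduction to a separably generated subalgebra ``still carrying a diffuse measure'' is not automatic --- pushing a diffuse measure forward along a quotient of spectra can create atoms, so that subalgebra must be chosen with care).

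The genuine gap is exactly where you locate it: the implication (2)$\Rightarrow$(3) is not proved. Your perturbation lemma (use non-minimality of $q=\chi_{\{t_0\}}(a)$ to find $b\in A$ with $qbq$ non-scalar, then add a small cut-down of $b$) is plausible as a single step, although even there ``non-scalar'' must be upgraded to ``non-scalar modulo $\tau$-null projections'' and a definite quantitative decrease of the atom must be extracted. But the iteration cannot be closed as described. As you yourself observe, the set of self-adjoint contractions whose distribution carries an atom of mass $\geq\varepsilon$ is norm-closed; its complement is, however, not norm-open in any useful sense --- every self-adjoint element is a norm limit of elements with \emph{finite} spectrum --- so the perturbation performed at stage $n+1$, however small in norm, can recreate atoms that stage $n$ destroyed. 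Summability of the perturbations guarantees only that the limit exists in $A$, not that it is atomless; to conclude you would need uniform control of the modulus of continuity of the distribution functions along the entire sequence, and nothing in the proposal provides it. This is precisely the difficulty the paper's construction is designed to avoid: instead of perturbing elements, it builds the spectral data first --- open projections $p_t$ with $\tau(p_t)=1+t$ \emph{exactly}, compactly contained in one another (Lemmas \ref{prp:NWSImpliesDenseOP}, \ref{prp:interpolatingPairs} and \ref{prp:constructionPath}, resting on the ``no gaps'' property of nowhere scattered functionals) --- and only then assembles the positive element via \autoref{prp:elementFromPath}, so that the Lebesgue distribution holds by construction rather than in a limit. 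Until the convergence of your iteration is actually established, the central implication of the theorem remains unproved.
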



\begin{corIntro}[\ref{prp:charNoFDRep}, \ref{prp:traceSimple}]
A unital \ca{} has no finite-dimensional representations if and only if each of its tracial states admits a Haar unitary.

In particular, every tracial state on a unital, simple, infinite-dimensional \ca{} is diffuse and admits a Haar unitary.
\end{corIntro}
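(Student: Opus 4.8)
The plan is to derive both assertions from the equivalence of conditions~(2) and~(4) in \autoref{thmIntroMain}, so that the only genuine work is to translate the representation-theoretic hypotheses into the trace-theoretic language of that theorem. The bridge is the elementary observation that, for a unital \ca{} $A$, the following are equivalent: $A$ has a nonzero finite-dimensional representation; $A$ has a nonzero finite-dimensional quotient; and there is a nonzero positive tracial functional on $A$ that factors through a finite-dimensional quotient of $A$. Indeed, the image of a nonzero representation $\pi\colon A\to M_n(\CC)$ is a nonzero finite-dimensional quotient $A/\ker\pi$; conversely any nonzero finite-dimensional quotient $B=A/I$ is unital and isomorphic to a finite direct sum of matrix algebras, so projecting onto one summand and precomposing with the quotient map yields a nonzero finite-dimensional representation. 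Finally, such a $B$ carries a tracial state (e.g.\ a normalized trace on one matrix summand), whose pullback along $A\to B$ is a nonzero tracial functional factoring through a finite-dimensional quotient; and if such a functional exists, then the quotient through which it factors is nonzero (else the functional would vanish) and finite-dimensional.

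For the first statement I would argue as follows. If $A$ has no finite-dimensional representation, then by the observation above there is no nonzero tracial functional factoring through a finite-dimensional quotient at all, so condition~(2) of \autoref{thmIntroMain} holds vacuously for every tracial state $\tau$; hence every $\tau$ satisfies~(4) and admits a Haar unitary. Conversely, I prove the contrapositive: if $A$ has a nonzero finite-dimensional representation, then $A$ has a nonzero finite-dimensional quotient $B=A/I$, and I set $\tau=\psi\circ q$, where $q\colon A\to B$ is the quotient map and $\psi$ is a tracial state on $B$. Then $\tau$ is a tracial state on $A$ that factors through the finite-dimensional quotient $B$, so $\tau$ dominates the nonzero tracial functional $\tau$ itself; thus~(2) fails for $\tau$, and by \autoref{thmIntroMain} so does~(4), that is, $\tau$ admits no Haar unitary. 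This shows that if every tracial state admits a Haar unitary, then $A$ has no finite-dimensional representation.

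The second statement is then immediate. If $A$ is unital, simple and infinite-dimensional, then any nonzero representation $\pi\colon A\to M_n(\CC)$ has kernel a proper closed two-sided ideal, hence $\ker\pi=\{0\}$ by simplicity, so $A$ embeds into $M_n(\CC)$ and is finite-dimensional, a contradiction. Thus $A$ has no finite-dimensional representation, and the first statement gives that each tracial state admits a Haar unitary; by the equivalence of~(1) and~(4) in \autoref{thmIntroMain} each such trace is moreover diffuse.

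I expect no serious obstacle here: \autoref{thmIntroMain} carries all the analytic content, and the remaining steps are just the routine dictionary between finite-dimensional representations, finite-dimensional quotients, and tracial functionals factoring through them. The only points requiring a little care are bookkeeping ones, namely insisting that representations be nonzero (so as to exclude the zero map) and noting that a finite-dimensional quotient of a unital algebra is again unital, so that the pulled-back trace is genuinely a state.
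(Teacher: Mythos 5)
Your proposal is correct and follows exactly the route the paper intends: both corollaries are immediate consequences of the equivalence of conditions (1), (4) and (7) in \autoref{prp:main}, via the routine dictionary between finite-dimensional representations, finite-dimensional quotients, and tracial functionals factoring through them (the paper leaves these details unstated). Your bookkeeping — excluding the zero representation, noting that a finite-dimensional quotient of a unital algebra is unital so the pulled-back trace is a state, and using simplicity to rule out finite-dimensional representations of an infinite-dimensional simple algebra — is precisely what is needed.
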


In \autoref{sec:groups}, we apply these results to reduced group \ca{s}.
First, we see that a discrete group $G$ is infinite if and only if the canonical tracial state on the reduced group \ca{} $C^*_\red(G)$ admits a Haar unitary;
see \autoref{prp:charGpInfinite}.
Thus, if $G$ is an infinite, locally finite group, then $C^*_\red(G)$ contains a Haar unitary, while there exists no Haar unitary in $\CC[G]$;
see \autoref{exa:locFinGp}.
We also obtain a characterization of nonamenability:

\begin{prpIntro}[\ref{prp:grpNoFD}]
A discrete group $G$ is nonamenable if and only if every tracial state on $C^*_\red(G)$ admits a Haar unitary.
\end{prpIntro}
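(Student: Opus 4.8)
The plan is to reduce this to a purely group-theoretic criterion via the characterization coming from \autoref{thmIntroMain}. Indeed, \autoref{prp:charNoFDRep} shows that every tracial state on a unital \ca{} admits a Haar unitary if and only if the algebra has no finite-dimensional representations; since $C^*_\red(G)$ always carries its canonical tracial state, this condition is never vacuous. Hence it suffices to prove that $G$ is amenable if and only if $C^*_\red(G)$ admits a finite-dimensional representation. Throughout I use the standard identification of representations of $C^*_\red(G)$ with unitary representations of $G$ that are weakly contained in the left regular representation $\lambda$, written $\pi\prec\lambda$, together with Hulanicki's theorem: $G$ is amenable precisely when $1_G\prec\lambda$, where $1_G$ denotes the trivial representation. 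The forward implication is then immediate: if $G$ is amenable, then $1_G\prec\lambda$, so the trivial representation factors through $C^*_\red(G)$ and gives a one-dimensional representation.

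For the converse, suppose $C^*_\red(G)$ admits a finite-dimensional representation. Restricting to the canonical unitaries yields a finite-dimensional unitary representation $\pi\colon G\to U(n)$ with $\pi\prec\lambda$. The decisive step is a tensoring argument. By Fell's absorption principle, $\lambda\otimes\bar\pi\cong\lambda^{\oplus n}$, which is weakly equivalent to $\lambda$; combining this with the stability of weak containment under tensoring by a fixed representation, we obtain
\[
\pi\otimes\bar\pi \prec \lambda\otimes\bar\pi \cong \lambda^{\oplus n} \sim \lambda.
\]
Since $\pi$ is finite-dimensional, $\pi\otimes\bar\pi$ is equivalent to the conjugation action on $\mathrm{End}(\CC^n)$, which contains $1_G$ as the line spanned by the identity operator; thus $1_G$ is a subrepresentation of $\pi\otimes\bar\pi$. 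By transitivity of weak containment, $1_G\prec\lambda$, and Hulanicki's theorem gives that $G$ is amenable.

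I expect the main obstacle to lie in this converse, specifically in showing that a finite-dimensional representation weakly contained in $\lambda$ forces $1_G\prec\lambda$. The points demanding care are Fell's absorption in the form $\lambda\otimes\bar\pi\sim\lambda$ for finite-dimensional $\pi$, and the two structural properties of weak containment invoked above, namely preservation under tensoring by a fixed representation and transitivity. Granting these standard facts, the proposition follows by assembling the group-theoretic equivalence with \autoref{prp:charNoFDRep}.
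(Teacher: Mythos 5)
Your proposal is correct, and it follows the paper's overall strategy: both reduce the statement, via \autoref{prp:charNoFDRep}, to the equivalence between amenability of $G$ and the existence of a finite-dimensional representation of $C^*_\red(G)$, and both handle the forward direction by noting that amenability gives $1_G\prec\lambda_G$ and hence a one-dimensional representation of $C^*_\red(G)$. The converse is where you genuinely diverge. The paper argues operator-algebraically: given a unital representation $\pi\colon C^*_\red(G)\to M_n(\CC)$, it uses the Arveson extension theorem to produce a completely positive contraction $\tilde{\pi}\colon\Bdd(\ell^2(G))\to M_n(\CC)$, observes that the unitaries $u_g$ lie in the multiplicative domain of $\tilde{\pi}$, and concludes that $\tr_n\circ\tilde{\pi}$ restricts to an invariant mean on $\ell^\infty(G)$. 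You instead argue representation-theoretically: a finite-dimensional $\pi\prec\lambda$ yields $\pi\otimes\bar{\pi}\prec\lambda\otimes\bar{\pi}\cong\lambda^{\oplus n}\sim\lambda$ by Fell absorption and the stability of weak containment under tensoring, and since $\pi\otimes\bar{\pi}$ (the conjugation action on $\mathrm{End}(\CC^n)$) fixes the identity operator, it contains $1_G$ as a subrepresentation, whence $1_G\prec\lambda$ and Hulanicki's theorem applies. Both routes rest on standard machinery; yours trades the Arveson extension and multiplicative-domain argument for Fell's absorption principle plus three standard properties of weak containment (transitivity, stability under tensoring, and Hulanicki's characterization of amenability), and it makes visible exactly where finite-dimensionality is used, namely that the identity operator is a Hilbert--Schmidt invariant vector for $\pi\otimes\bar{\pi}$. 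The paper's argument has the advantage of producing the invariant mean directly and of staying entirely within the completely positive map toolkit it already uses elsewhere.
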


In \autoref{sec:freeProd}, we obtain applications to reduced free products.
Given unital \ca{s} $A$ and $B$ with faithful tracial states $\tau_A$ and $\tau_B$, respectively, it is a well-studied problem to determine when the reduced free product of $(A,\tau_A)$ and $(B,\tau_B)$ is simple or has stable rank one;
see \cite{DykHaaRor97SRFreeProd, Dyk99SimplSRFreeProd}.
By \cite[Theorem~2]{Dyk99SimplSRFreeProd}, a sufficient condition is that there is an abelian subalgebra $C(X)\subseteq A$ such that $\tau_A$ induces a diffuse measure on $X$, and that $B\neq\CC$.
\autoref{thmIntroMain} shows that the condition on $A$ is satisfied if and only if $\tau_A$ is diffuse;
see \autoref{prp:freeProdDiffuse}.
As an important special case, we get:

\begin{corIntro}[\ref{prp:freeProdSimpleTracial}]
Let $A$ and $B$ be unital, simple \ca{s} with tracial states $\tau_A$ and $\tau_B$.
Assume that $A\neq\CC$ and $B\neq\CC$.
Then the reduced free product of $(A,\tau_A)$ and $(B,\tau_B)$ is simple, has stable rank one and a unique tracial state.
\end{corIntro}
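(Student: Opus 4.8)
The plan is to reduce to the case already covered by \autoref{prp:freeProdDiffuse} whenever possible, and to split off the purely matricial situation. Two preliminary remarks streamline this. First, since $A$ and $B$ are simple and unital, the tracial states $\tau_A$ and $\tau_B$ are automatically faithful: the set $\{a\in A:\tau_A(a^*a)=0\}$ is a closed two-sided ideal not containing the unit, hence zero, and likewise for $B$. Thus the reduced free product $M:=(A,\tau_A)\astRed(B,\tau_B)$ is defined. Second, a unital simple \ca{} is either infinite-dimensional or isomorphic to a full matrix algebra $M_k$; because $A\neq\CC$ and $B\neq\CC$, in the matricial case $k\geq 2$. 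I would therefore distinguish whether or not at least one factor is infinite-dimensional.

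Suppose first that one of the factors, say $A$, is infinite-dimensional. Being unital, simple and infinite-dimensional, $A$ has a diffuse trace by \autoref{prp:traceSimple}. Since $B\neq\CC$, \autoref{prp:freeProdDiffuse} applies and shows that $M$ is simple and has stable rank one; that $M$ has a unique tracial state is part of the same free-product analysis (\cite{Dyk99SimplSRFreeProd}). As the reduced free product is symmetric in its arguments, the identical conclusion holds when instead $B$ is infinite-dimensional. This disposes of every case except the one in which both factors are finite-dimensional.

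It remains to treat $A\cong M_n$ and $B\cong M_m$ with $n,m\geq 2$, where $\tau_A,\tau_B$ are the normalized (hence non-diffuse) traces, so that \autoref{prp:freeProdDiffuse} is not available. In keeping with \autoref{prp:main} it is worth recording that $M$ nonetheless contains a Haar unitary: taking diagonal unitaries $u=\diag(1,\omega,\ldots,\omega^{n-1})\in A$ and $v=\diag(1,\zeta,\ldots,\zeta^{m-1})\in B$, where $\omega,\zeta$ are primitive roots of unity of orders $n,m$, one has $\tau_A(u)=\tau_B(v)=0$, and freeness forces the trace of every alternating product of the centered elements $u,u^*,v,v^*$ to vanish; hence $w:=uv$ satisfies $(\tau_A\astRed\tau_B)(w^k)=0$ for all $k\in\ZZ\setminus\{0\}$ and is a Haar unitary, so that $\tau_A\astRed\tau_B$ is diffuse and $C^*(w)\cong C(\TT)$.

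The main obstacle is that this diffuseness cannot be fed back into \autoref{prp:freeProdDiffuse}, since the abelian subalgebra $C^*(w)$ does not present $M$ as a reduced free product with a diffuse free factor. For this last case I would instead invoke the direct analysis of reduced free products of finite-dimensional \ca{s}: simplicity and uniqueness of the trace of $(M_n,\tau)\astRed(M_m,\tau)$ for $n,m\geq 2$ follow from \cite{Dyk99SimplSRFreeProd} (whose simplicity criterion, in the spirit of Avitzour, is met because each $M_k$ with $k\geq 2$ contains enough trace-zero unitaries), and stable rank one follows from \cite{DykHaaRor97SRFreeProd}. Combining the two regimes yields the statement; the matricial case, lying just outside the scope of \autoref{prp:freeProdDiffuse}, is the only genuinely delicate point.
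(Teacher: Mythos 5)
Your proof is correct and follows essentially the same route as the paper: split according to whether one factor is infinite-dimensional (where \autoref{prp:traceSimple} gives diffuseness and \autoref{prp:freeProdDiffuse} applies) or both are matrix algebras $M_n$, $M_m$ with $n,m\geq 2$ (where Avitzour's condition is verified directly, as in \cite[Proposition~4.1(iv)]{DykHaaRor97SRFreeProd}). Your opening observation that simplicity forces $\tau_A$ and $\tau_B$ to be faithful is a worthwhile detail the paper leaves implicit, and the digression constructing the Haar unitary $w=uv$ in the matricial case, while correct, is not needed for the argument.
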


It follows that the class of unital, simple, stable rank one \ca{s} with unique tracial state is closed under formation of reduced free products;
see \autoref{prp:freeProdSimpleSR1}.
\\

Special cases of \autoref{thmIntroMain} have been shown before.
Under the additional assumption that the \ca{} is abelian, it follows from \cite[Proposition~4.1(i)]{DykHaaRor97SRFreeProd}.
For normal traces on von Neumann algebras, it is also well-known.
In fact, given a normal trace $\tau$ on a diffuse von Neumann algebra $M$, every maximal abelian subalgebra (masa) $D\subseteq M$ is a diffuse sub-von Neumann algebra.
It follows that $\tau|_D$ is a diffuse trace and thus admits a Haar unitary (for instance by \cite[Proposition~4.1(i)]{DykHaaRor97SRFreeProd}).
In particular, every masa in $M$ contains a Haar unitary. 

We point out that the analogous result does not hold for \ca{s}:
Given a diffuse trace on a unital \ca{} $A$, not every masa of $A$ needs to contain a Haar unitary;
see \autoref{exa:masa}.
This also indicates why the construction of a Haar unitary for a given diffuse trace is rather delicate.

\subsection*{Methods}

The most difficult implication in \autoref{thmIntroMain} is to show that a diffuse tracial state $\tau$ on a unital \ca{} $A$ admits a Haar unitary.
It suffices to construct a positive element in $A$ with spectrum $[0,1]$ on which $\tau$ induces the Lebesgue measure.
To find such element, we first establish a correspondence between positive elements in $A$ and certain maps from $(-\infty,0]$ to the lattice $\OP(A)$ of open projections.
More precisely, a positive element $a$ induces a map $f_a\colon(-\infty,0]\to\OP(A)$, that sends $t\leq 0$ to the support projection of $(a+t)_+$, and one can describe explicitly which maps $(-\infty,0]\to\OP(A)$ arise this way;
see \autoref{prp:elementFromPath}.
Further, a positive element $a$ has spectrum $\spec(a)=[0,1]$ and $\tau$ induces the Lebesgue measure on $\spec(a)$ if and only if the associated map $f_a$ satisfies $\tau(f_a(t))=1+t$ for $t\in[-1,0]$.

To obtain the desired map $f\colon(-\infty,0]\to\OP(A)$, it suffices to construct open projections $p_t$ for dyadic rationals in $[-1,0]$ such that $\tau(p_t)=1+t$ and such that $p_{t'}$ is compactly contained in $p_t$ in the sense of \autoref{dfn:prec}, denoted $p_{t'}\prec p_t$, whenever $t'<t$.
Such projections could easily be obtained by successive interpolation if we could show that for given $p\prec q$ in $\OP(A)$ and $t$ with $\tau(p)<t<\tau(q)$, there exists $r\in\OP(A)$ such that $p\prec r\prec q$ and $\tau(r)=t$.
In \autoref{prp:interpolatingPairs}, we establish an approximate version of this interpolation result, which suffices to construct the desired map $(-\infty,0]\to\OP(A)$.

The crucial assumption in \autoref{prp:interpolatingPairs} is that $\tau$ has `no gaps' in the sense that for every open projection $p$, the set $\{\tau(p') : p'\in\OP(A), p'\leq p\}$ is dense in $[0,\tau(p)]$.
The key observation is that this `no gaps' property holds if (and only if) $\tau$ is \emph{nowhere scattered}, which means that it gives no weight to scattered ideal-quotients;
see \autoref{dfn:NWS}.
This even holds for arbitrary positive functionals, and in \autoref{prp:charNWS} we provide several characterizations for a functional to be nowhere scattered.
The final ingredient is that a diffuse trace is nowhere scattered;
see \autoref{prp:diffuseImpliesNWS}.

\subsection*{Notation}
Given a \ca{} $A$, we use $A_+$ to denote the positive elements in~$A$.
By an ideal in a \ca{} we always mean a closed, two-sided ideal.
Given $a\in A$, we use $\spec(a)$ to denote its spectrum, and we let $\supp(a)$ denote the support projection in $A^{**}$.
Given a Hilbert space $H$, $\Bdd(H)$ denotes the \ca{} of bounded, linear operators on $H$, and $\Cpct(H)$ is the ideal of compact operators.

\section{Open projections}
\label{sec:OP}

Let $A$ be a \ca. 
A projection $p\in A^{**}$ is said to be \emph{open} if there exists an increasing net in $A_+$ that converges to $p$ in the weak*-topology;
a projection $p\in A^{**}$ is said to be \emph{closed} if $1-p$ is open;
see \cite[Definition~II.1]{Ake69StoneWeierstrass}.
Given an open projection $p$, the sub-\ca{} $pA^{**}p\cap A$ of $A$ is hereditary and $p$ is the weak*-limit of any approximate unit of $pA^{**}p\cap A$.
Conversely, given a hereditary sub-\ca{} $B\subseteq A$, there exists a (unique) open projection $p$ such that $B=pA^{**}p\cap A$.
For details we refer to \cite[p.77f]{Ped79CAlgsAutGp}.

We let $\OP(A)$ denote the collection of open projections in $A^{**}$, and we consider it as a subset of the complete lattice $\Proj(A^{**})$ of projections in $A^{**}$.
By \cite[Proposition~II.5]{Ake69StoneWeierstrass}, the infimum of an arbitrary family of closed projections is again closed.
Given any projection $p\in A^{**}$, this allows one to define its \emph{closure} $\overline{p}$ as the smallest closed projection that majorizes $p$.
Dually, $\OP(A)$ is closed under arbitrary suprema in $\Proj(A^{**})$.
Given a family $(p_j)_j$ of open projections, the open projection $\bigvee_j p_j$ corresponds to the hereditary sub-\ca{} of $A$ generated by $A\cap \bigcup_j p_jA^{**}p_j$.
We note that the infimum $\bigwedge_j p_j$ in $\Proj(A^{**})$ is in general strictly larger than the open projection
\[
\bigvee \big\{ p\in\OP(A) : p\leq p_j \text{ for all j} \big\},
\]
which corresponds to the hereditary sub-\ca{} $A \cap \bigcap_j p_jA^{**}p_j$.
Thus, $\OP(A)$ is naturally isomorphic to the lattice of hereditary sub-\ca{s} studied in \cite{AkeBic15HerSubalgLattice}.

\begin{dfn}
\label{dfn:prec}
We define an auxiliary relation $\prec$ on $\OP(A)$ by setting $p\prec q$ for $p,q\in\OP(A)$ if there exists $a\in A_+$ with $p\leq a\leq q$.
\end{dfn}

One can show that $p\prec q$ if and only if $p$ is compactly contained in $q$ in the sense of \cite[Definition~3.6]{OrtRorThi11CuOpenProj}.
Using Theorem~II.5 and Lemma~III.1 in \cite{Ake71GelfandRepr}, it follows that $p\prec q$ if and only if $\overline{p}\leq q$ and there exists $b\in A_+$ such that $\overline{p}\leq b$.
Thus, if $A$ is unital, then $p\prec q$ if and only if $\overline{p}\leq q$.

\begin{lma}
\label{prp:charCompPosProj}
Let $a$ be a positive, \emph{contractive} element, and let $e$ be a projection in a \ca.
Then $e\leq a$ if and only if $e=ea$.
Further, we have $a\leq e$ if and only if $a=ae$.
\end{lma}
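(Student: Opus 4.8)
The plan is to prove both equivalences by the same elementary manipulation, working inside the unital \ca{} $\widetilde A$ obtained by adjoining a unit (if $A$ is already unital this is $A$ itself), since the statement involves the projection $e$ and the constants $0$ and $1$ implicitly through contractivity. Because $a$ is positive and contractive, we have $0\leq a\leq 1$, so the relevant algebraic identities live in the commutative sub-\ca{} $C^*(a,e)$ only after we know $a$ and $e$ commute; the first task is therefore to extract commutativity from the order relations, rather than assuming it.

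For the first equivalence, I would argue as follows. If $e=ea$, then taking adjoints gives $e=e^*=(ea)^*=ae$ (using $a=a^*$ and $e=e^*$), so $e$ commutes with $a$. Then $a-e=a-ea=(1-e)a(1-e)+\dots$; more cleanly, since $e$ and $a$ commute I can compute $a-e = a - ea = (1-e)a \geq 0$ because $1-e$ is a projection commuting with the positive element $a$, whence $(1-e)a=(1-e)a(1-e)\geq 0$. This gives $e\leq a$. Conversely, suppose $e\leq a$. The key step is to deduce $e=ea$, and here the standard trick is to compress: from $0\leq e\leq a\leq 1$ we get $e=eae\leq e\cdot 1\cdot e=e$ after multiplying $e\leq a$ on both sides by $e$, so $eae=e$. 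Then consider $\|(e-ea)^*(e-ea)\|=\|(e-ae)(e-ea)\|=\|e-eae-eae+eae^2\|$; expanding using $e^2=e$ and $eae=e$ shows this norm is $0$, forcing $e=ea$. I would carry out this expansion carefully, as it is the crux: $(e-ea)^*(e-ea)=e-eae-eae+ea^2e$, and since $e\leq a\leq 1$ implies $a^2\leq a$ (as $a(1-a)\geq 0$ by contractivity), we get $ea^2e\leq eae=e$, and combined with $eae=e$ the whole expression is dominated by $e-e-e+e=0$ while being positive, hence zero.

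The main obstacle is precisely this last inequality $ea^2e\leq eae$, which relies on contractivity through $a^2\leq a$; this is where the hypothesis $a\leq 1$ is essential and where a naive argument would break down. Once $e=ea$ is established, the commutativity observation at the start of the previous paragraph shows the two conditions $e\leq a$ and $e=ea$ are genuinely equivalent.

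For the second equivalence, involving $a\leq e$, I would reduce it to the first by passing to complements. Assume $a\leq e$. Multiplying by the projection $1-e$ on both sides gives $(1-e)a(1-e)\leq (1-e)e(1-e)=0$, and since $(1-e)a(1-e)\geq 0$ we conclude $(1-e)a(1-e)=0$; because $a\geq 0$ this forces $a^{1/2}(1-e)=0$, hence $a(1-e)=0$, i.e.\ $a=ae$. Conversely, if $a=ae$ then $a=ea$ by taking adjoints, so $a$ commutes with $e$ and $e-a=e-ea=e(1-a)\geq 0$ since $e$ is a projection commuting with the positive contraction $a$, whence $a\leq e$. I expect this second part to be the easier of the two, since the compression argument $(1-e)a(1-e)=0\Rightarrow a^{1/2}(1-e)=0$ is a clean standard fact and does not require contractivity at all—indeed only the first equivalence truly needs $a\leq 1$.
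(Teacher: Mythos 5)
Your proof is correct and follows essentially the same route as the paper: compress to get $eae=e$ (respectively $(1-e)a(1-e)=0$) and then use the $C^*$-identity $x^*x=0\Rightarrow x=0$; the converse directions differ only cosmetically (you extract commutativity of $e$ and $a$ by taking adjoints, while the paper uses $e=aea\leq a^2$ and operator monotonicity of the square root). Two small blemishes: the expansion $e-eae-eae+ea^2e$ is that of $(e-ea)(e-ea)^*$, not of $(e-ea)^*(e-ea)$ (harmless, since either product vanishing forces $e=ea$), and your first displayed version of it, with $eae^2$ in place of $ea^2e$, is garbled. Finally, your closing remark that only the first equivalence truly needs $a\leq 1$ is wrong: the implication $a=ae\Rightarrow a\leq e$ fails without contractivity (take $a=2e$), and your own argument for it uses $1-a\geq 0$.
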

\begin{proof}
If $e\leq a\leq 1$, then $e\leq eae\leq e$ and so
\[
[e(1-a)^{1/2}][e(1-a)^{1/2}]^*=e(1-a)e=0.
\]
Hence, $e(1-a)^{1/2}=0$, which implies $e(1-a)=0$, that is, $a=ea$. Conversely, if $e=ea$, then $e=aea\leq a^2$, and so $e=e^{1/2}\leq (a^2)^{1/2}=a$.

Further, if $a\leq e$, then $0\leq (1-e)a(1-e)\leq (1-e)e(1-e)=0$ and so $[a^{1/2}(1-e)]^*[a^{1/2}(1-e)]=0$. 
Hence $a^{1/2}(1-e)=0$, and we get $a(1-e)=0$, that is, $a=ae$;
see \cite[Proposition~II.3.3.2]{Bla06OpAlgs}.
Conversely, if $a=ae$, then $a=eae\leq e^2=e$. 
\end{proof}

It follows from \autoref{prp:charCompPosProj} that open projections $p,q$ satisfy $p\prec q$ if and only if there exists a positive element $a$ such that $p=pa$ and $a=aq$.
The next result summarizes basic properties of the relation $\prec$.

\begin{lma}
Let $A$ be a \ca.
Then the following statements hold: 
\begin{enumerate}
\item
Let $p,q,r,s\in\OP(A)$ satisfy $p\leq q\prec r\leq s$.
Then $p\prec s$.
\item
Let $p,q\in\OP(A)$ satisfy $p\prec q$.
Then there exists $q'\in\OP(A)$ with $p\prec q'\prec q$.
\end{enumerate}
\end{lma}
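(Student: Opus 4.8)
The plan is to read off (1) directly from the definition of $\prec$, and to prove (2) by performing functional calculus on a single positive element witnessing $p\prec q$. For (1), since $q\prec r$ there is $a\in A_+$ with $q\le a\le r$; then $p\le q\le a$ and $a\le r\le s$, so $p\le a\le s$ exhibits $p\prec s$, and nothing further is needed.

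For (2), fix $a\in A_+$ with $p\le a\le q$. As $q$ is a projection, $a$ is contractive and $\spec(a)\subseteq[0,1]$. I would first record two consequences of \autoref{prp:charCompPosProj}. From $p\le a$ we get $p=pa$, hence $p\,a^n=p$ for all $n$ and therefore $p\,f(a)=f(1)\,p$ for every continuous $f$ on $[0,1]$; in particular a contractive $f(a)$ satisfies $p\le f(a)$ whenever $f(1)=1$. Dually, from $a\le q$ we get $a=aq$, hence $f(a)=f(a)\,q$ whenever $f(0)=0$, so a contractive $f(a)$ satisfies $f(a)\le q$ whenever $f(0)=0$.

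Now fix $\delta\in(0,1)$ and let $q'=\supp\big((a-\delta)_+\big)$, which is open and equals the spectral projection $\chi_{(\delta,1]}(a)$. To see $p\prec q'$, pick a continuous $g\colon[0,1]\to[0,1]$ vanishing on $[0,\delta]$ with $g(1)=1$ (for instance $g(t)=\max\{0,(t-\delta)/(1-\delta)\}$); then $p\le g(a)$ by the first fact, while the pointwise inequality $g\le\chi_{(\delta,1]}$ on $[0,1]$ gives $g(a)\le q'$, so $p\le g(a)\le q'$. Symmetrically, to see $q'\prec q$, pick a continuous $h\colon[0,1]\to[0,1]$ with $h(0)=0$ and $h\equiv 1$ on $[\delta,1]$ (for instance $h(t)=\min\{t/\delta,1\}$); then $h(a)\le q$ by the second fact, while $\chi_{(\delta,1]}\le h$ gives $q'\le h(a)$, so $q'\le h(a)\le q$. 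Hence $p\prec q'\prec q$, as desired.

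The only points requiring care are that $q'$ is genuinely open --- which is why I take it to be the support projection of a positive element rather than an abstract spectral projection --- and that the two pointwise spectral comparisons $g\le\chi_{(\delta,1]}\le h$ hold on all of $[0,1]$. Both are routine, so I anticipate no real obstacle; the substance of the lemma lies entirely in translating $p\prec q$ into the algebraic identities $p=pa$ and $a=aq$ supplied by \autoref{prp:charCompPosProj}.
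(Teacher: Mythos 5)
Your proof is correct and follows essentially the same route as the paper: translate $p\prec q$ into the identities $p=pa$ and $a=aq$ via \autoref{prp:charCompPosProj}, take $q'$ to be the support projection of a cut-down of $a$, and sandwich it between two continuous cutoff functions of $a$. (In fact your displayed chain $p\le g(a)\le q'\le h(a)\le q$ is the corrected form of the paper's chain, which appears to have the roles of its two functions transposed.)
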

\begin{proof}
Statement~(1) is obvious.
To verify~(2), let $a\in A_+$ satisfy $p\leq a\leq q$.
Then $p=pa$ and $a=aq$.
Let $f,g\colon\RR\to[0,1]$ be continuous functions such that~$f$ takes the value $0$ on $[0,\tfrac{1}{2}]$ and the value $1$ on $[1,\infty)$, while $g$ takes the value $0$ on $\{0\}$ and the value $1$ on $[\tfrac{1}{2},\infty)$.
Note that $fg=f$.
Set $q'=\supp(f(a))\in\OP(A)$.
Then $p\leq f(a)\leq q'\leq g(a)\leq q$, which shows that $p\prec q'\prec q$.
\end{proof}

The following definition is inspired by the notion of paths in $\mathcal{Q}$-semigroups (certain directed complete, partially ordered, abelian semigroups equipped with an auxiliary relation) from \cite[Paragraph~2.12]{AntPerThi20CuntzUltraproducts}.

\begin{dfn}
\label{dfn:path}
A \emph{path} in $\OP(A)$ is an order-preserving map $f\colon(-\infty,0]\to\OP(A)$ satisfying the following conditions:
\begin{enumerate}
\item
$f(t)=\sup\{f(t'):t'<t\}$ for every $t\in(-\infty,0]$;
\item
$f(t')\prec f(t)$ for all $t'<t$ in $(-\infty,0]$.
\end{enumerate}
We say that a path $f$ is \emph{bounded} if there exist $t\in(-\infty,0]$ such that $f(t)=0$.
We then define the length of $f$ as $l(f):=\sup\{|t|:f(t)\neq 0\}$.
\end{dfn}

Every positive element in $A$ defines a bounded path in $\OP(A)$:

\begin{lma}
\label{prp:pathFromElement}
Let $A$ be a \ca{} and $a\in A_+$.
Define $f_a\colon(-\infty,0]\to\OP(A)$ by
\[
f_a(t) := \supp( (a+t)_+ )
\]
for $t\in(-\infty,0]$.
Then $f_a$ is a bounded path in $\OP(A)$ with length $l(f_a)=\|a\|$.
\end{lma}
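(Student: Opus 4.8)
The plan is to identify the support projections $f_a(t)=\supp((a+t)_+)$ with spectral projections of $a$ inside the abelian von Neumann subalgebra $W^*(a)\subseteq A^{**}$, after which every assertion reduces to an elementary statement about intervals. Writing $g_t(x):=(x+t)_+$ for the corresponding continuous function on $[0,\infty)$, we have $(a+t)_+=g_t(a)$, and since $\{x:g_t(x)>0\}=(-t,\infty)$, the composition rule for the Borel functional calculus gives
\[
f_a(t)=\supp(g_t(a))=\chi_{(0,\infty)}(g_t(a))=\chi_{(-t,\infty)}(a),
\]
where $\chi_S(a)$ denotes the spectral projection of $a$ associated with a Borel set $S$. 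Note that $g_t(0)=0$ for every $t\le 0$, so each $(a+t)_+$ lies in $A$ even when $A$ is nonunital. Since $t'<t$ in $(-\infty,0]$ yields the nested inclusion $(-t',\infty)\subseteq(-t,\infty)$, monotonicity of the spectral calculus immediately shows that $f_a$ is order-preserving.

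For condition~(1) of \autoref{dfn:path} I would use that, as $t'$ increases to $t$ from below, the open sets $(-t',\infty)$ increase with union $\bigcup_{t'<t}(-t',\infty)=(-t,\infty)$; since all the projections $\chi_{(-t',\infty)}(a)$ lie in $W^*(a)$, where the supremum of an increasing family of spectral projections is the spectral projection of the union, this gives $\sup_{t'<t}f_a(t')=\chi_{(-t,\infty)}(a)=f_a(t)$. For condition~(2), given $t'<t$ I would exhibit an explicit interpolant: choose a continuous function $h\colon[0,\infty)\to[0,1]$ with $h=0$ on $[0,-t]$ and $h=1$ on $[-t',\infty)$, so that $h(0)=0$ and $h(a)\in A_+$ is a contraction. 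The functional-calculus identities $\chi_{(-t',\infty)}\,h=\chi_{(-t',\infty)}$ and $h\,\chi_{(-t,\infty)}=h$, combined with \autoref{prp:charCompPosProj}, yield
\[
f_a(t')=\chi_{(-t',\infty)}(a)\le h(a)\le\chi_{(-t,\infty)}(a)=f_a(t),
\]
so that $f_a(t')\prec f_a(t)$ in the sense of \autoref{dfn:prec}.

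Finally, for boundedness and length I would observe that $f_a(t)=\chi_{(-t,\infty)}(a)$ is nonzero precisely when $\spec(a)\cap(-t,\infty)\neq\emptyset$, that is, when $-t<\|a\|$, because $\|a\|=\max\spec(a)$. Hence $f_a(t)=0$ for $t\le-\|a\|$, so the path is bounded, and $\{|t|:f_a(t)\neq 0\}=[0,\|a\|)$, whose supremum is $\|a\|=l(f_a)$ (with the degenerate case $a=0$ giving the constant zero path of length $0$). The computation is otherwise routine; the one point that requires care—and thus the main obstacle—is condition~(1), where one must verify that the supremum of the increasing family $(\chi_{(-t',\infty)}(a))_{t'<t}$ computed in $\Proj(A^{**})$ is genuinely the spectral projection of the union, and not something strictly larger, in contrast to the behaviour of infima of open projections noted earlier. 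Confining the argument to the abelian algebra $W^*(a)$ and using that increasing nets of projections converge strongly to their supremum, computed identically in $W^*(a)$ and in $A^{**}$, resolves this.
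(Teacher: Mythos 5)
Your proof is correct. The paper in fact states this lemma without any proof, treating it as routine, so there is nothing to compare against; your route---identifying $f_a(t)$ with the spectral projection $\chi_{(-t,\infty)}(a)$ and reducing conditions (1) and (2) of \autoref{dfn:path} to monotone convergence of spectral projections and to an explicit continuous interpolant $h(a)$ combined with \autoref{prp:charCompPosProj}---is the natural argument, and every step checks out, including the two points that genuinely need care (that $h(0)=0$ keeps $h(a)$ in $A$ when $A$ is nonunital, and that the supremum of the increasing family $(\chi_{(-t',\infty)}(a))_{t'<t}$ in $\Proj(A^{**})$ is the strong limit and hence equals $\chi_{(-t,\infty)}(a)$ rather than something larger).
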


Below, we show that every bounded path in $\OP(A)$ is induced by an element in~$A_+$.
It follows that positive elements in $A$ correspond to bounded paths in $\OP(A)$.
However, the (pointwise) order on paths does not correspond to the usual order on $A_+$, but to the \emph{spectral order} introduced by Olsen in \cite{Ols71SelfadjLattice}.
We note that the next result is closely related to \cite[Theorem~I.1]{Ake71GelfandRepr}.

\begin{prp}
\label{prp:elementFromPath}
Let $A$ be a \ca{} and let $f\colon(-\infty,0]\to\OP(A)$ be a bounded path.
Then there exists a unique positive element $a\in A$ such that $f=f_a$, that is, $f(t)=\supp((a+t)_+)$ for every $t\in(-\infty,0]$.
\end{prp}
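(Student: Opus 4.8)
The plan is to pass to the decreasing family of open projections $p_s := f(-s)$ for $s\in[0,\infty)$ and to realize the desired element as the ``integral'' $a=\int_0^\infty p_s\,ds$. Translating the path axioms, order-preservation of $f$ makes $s\mapsto p_s$ decreasing; condition~(1) becomes right-continuity, $p_s=\sup_{s'>s}p_{s'}$; condition~(2) becomes $p_{s'}\prec p_s$ whenever $s'>s$; and boundedness gives $p_s=0$ for $s>L$, where $L=l(f)$. Since the $p_s$ are nested, they pairwise commute. For uniqueness I would use that every $a\in A_+$ satisfies $a=\int_0^\infty\chi_{(s,\infty)}(a)\,ds$ and that $\chi_{(s,\infty)}(a)=\supp((a-s)_+)=f_a(-s)$; hence $f_a=f_{a'}$ forces the spectral projections of $a$ and $a'$ to agree for every $s\geq 0$, and therefore $a=a'$.

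For existence, fix a finite partition $\mathcal P\colon 0=s_0<s_1<\dots<s_N$ with $s_N>L$ and form the lower and upper sums $\ell_{\mathcal P}=\sum_k(s_k-s_{k-1})p_{s_k}$ and $u_{\mathcal P}=\sum_k(s_k-s_{k-1})p_{s_{k-1}}$ in $A^{**}$. These a priori lie in $A^{**}$ only, and here condition~(2) enters: by \autoref{dfn:prec} each relation $p_{s_k}\prec p_{s_{k-1}}$ supplies a positive contraction $b_k\in A$ with $p_{s_k}\leq b_k\leq p_{s_{k-1}}$, so that the honest \emph{staircase} $a_{\mathcal P}:=\sum_k(s_k-s_{k-1})b_k$ lies in $A_+$ and is sandwiched, $\ell_{\mathcal P}\leq a_{\mathcal P}\leq u_{\mathcal P}$. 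The crucial estimate is that $u_{\mathcal P}-\ell_{\mathcal P}=\sum_k(s_k-s_{k-1})(p_{s_{k-1}}-p_{s_k})$ is a combination of the \emph{mutually orthogonal} projections $p_{s_{k-1}}-p_{s_k}$ (orthogonal because the $p_s$ are nested), whence $\|u_{\mathcal P}-\ell_{\mathcal P}\|\leq\max_k(s_k-s_{k-1})$, the mesh of $\mathcal P$.

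As for a monotone scalar integrand (all $p_s$ commute) the lower sums increase and the upper sums decrease under refinement, so along any sequence of partitions whose mesh tends to $0$ the nets $(\ell_{\mathcal P})$ and $(a_{\mathcal P})$ are norm-Cauchy with a common limit $a$. Since each $a_{\mathcal P}$ lies in the closed set $A_+$, so does $a$; this is the one genuinely delicate point, and it is exactly what condition~(2) buys us, since without honest witnesses the sums would only converge in $A^{**}$. It then remains to check $f_a=f$. Because all $p_s$ commute, $a$ lies in the commutative von Neumann subalgebra $W^*(\{p_s\})\subseteq A^{**}$, which I would model as $L^\infty(\Omega,\mu)$; there the $p_s$ become a decreasing, right-continuous family of indicators $\chi_{E_s}$, and the integral evaluates pointwise as $a(\omega)=\sup\{s:\omega\in E_s\}$. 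Consequently $\chi_{(s,\infty)}(a)(\omega)=1$ iff $\omega\in E_{s'}$ for some $s'>s$, i.e.\ iff $\omega\in E_s$ by right-continuity (condition~(1)); thus $\chi_{(s,\infty)}(a)=p_s$, equivalently $\supp((a+t)_+)=p_{-t}=f(t)$ for all $t\leq 0$, as desired. The main obstacle throughout is ensuring $a\in A$ rather than merely $A^{**}$, which the orthogonality estimate together with condition~(2) resolves.
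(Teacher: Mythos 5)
Your argument is correct, and its core --- the construction of $a$ --- is essentially the paper's argument in different clothing: the paper uses dyadic Riemann sums $a_n=\sum_k 2^{-n}a_{n,k}$ with witnesses $a_{n,k}$ supplied by condition~(2) of \autoref{dfn:path}, sandwiched between exactly your lower and upper sums of projections, and its Cauchy estimate is your mesh bound coming from the mutual orthogonality of the differences $p_{s_{k-1}}-p_{s_k}$. You differ in the two peripheral steps. For uniqueness the paper invokes Olsen's theorem on the spectral order ($a\leq b$ spectrally iff $a^n\leq b^n$ for all $n$), whereas you use the layer-cake identity $a=\int_0^\infty\chi_{(s,\infty)}(a)\,ds$; both are standard, and yours is arguably more self-contained. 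For the verification $f_a=f$ the paper stays with operator inequalities (using that $x\leq y$ implies $(x+t)_+\leq(y+t)_+$ for \emph{commuting} self-adjoint elements, after checking that the witnesses commute with the projections $f(l/2^n)$), whereas you pass to an $L^\infty$-model of $W^*(\{p_s\})$ and argue pointwise. That route is fine, but one justification is misattributed: the reason $a$ lies in $W^*(\{p_s\})$ is \emph{not} that the $p_s$ commute --- the staircases $a_{\mathcal P}$ built from the witnesses $b_k$ need not lie in that subalgebra, nor even commute with the $p_s$ a priori --- but rather that $a$ is also the \emph{norm} limit of the lower sums $\ell_{\mathcal P}$, which do lie in the norm-closed subalgebra $W^*(\{p_s\})$; you have this available from $\|a_{\mathcal P}-\ell_{\mathcal P}\|\leq\|u_{\mathcal P}-\ell_{\mathcal P}\|\leq\mathrm{mesh}(\mathcal P)$, so this is a gap in exposition rather than in substance. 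In the $L^\infty$-picture you should also fix a countable dense set of partition points (e.g.\ the dyadics) so that the almost-everywhere identifications of essential unions are legitimate; condition~(1), applied as $p_s=\sup_{s'>s}p_{s'}$ over that countable set, then yields $\chi_{(s,\infty)}(a)=p_s$ for every $s\geq 0$ exactly as you claim.
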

\begin{proof}
\emph{Uniqueness:}
Let $a,b\in A_+$.
By \cite[Theorem~3]{Ols71SelfadjLattice}, $a$ is dominated by $b$ in the spectral order if and only if $a^n\leq b^n$ for every $n\geq 1$.
In particular, $f_a\leq f_b$ implies $a\leq b$, and $f_a=f_b$ implies $a=b$.

\emph{Existence:}
Set $\ZZ_{\leq 0}:=\ZZ\cap(-\infty,0]$. 
Let $n\geq 1$.
For each $k\in\ZZ_{\leq 0}$, using that $f(\tfrac{k-1}{2^n})\prec f(\tfrac{k}{2^n})$, we obtain $a_{n,k}\in A_+$ such that
\[
f(\tfrac{k-1}{2^n})
\leq a_{n,k}
\leq  f(\tfrac{k}{2^n}).
\]
For $t\leq \tfrac{k-1}{2^n}$, we have $f(t)\leq a_{n,k}$.
By \autoref{prp:charCompPosProj}, this implies that $a_{n,k}$ commutes with $f(t)$.
Similarly, we obtain that $a_{n.k}$ commutes with $f(t)$ for every $t\geq \tfrac{k}{2^n}$.
In particular, $a_{n,k}$ commutes with $f(\tfrac{l}{2^n})$ for every $l\in\ZZ_{\leq 0}$.
We set
\[
a_n := \sum_{k=-\infty}^0 \tfrac{1}{2^n} a_{n,k},
\]
which is a finite sum since $a_{n,k}=0$ for $k<2^n l(f)$, where $l(f)$ is the lenght of $f$.
We deduce that $a_n$ commutes with $f(\tfrac{l}{2^n})$ for every $l\in\ZZ_{\leq 0}$.

For each $m\geq n$, we have
\begin{align}
\label{eq:elementFromPath}
\sum_{k\in\ZZ_{\leq 0}} \tfrac{1}{2^n} f(\tfrac{k-1}{2^n})
\leq a_m 
\leq \sum_{k\in\ZZ_{\leq 0}} \tfrac{1}{2^n} f(\tfrac{k}{2^n}).
\end{align}
Hence, $\|a_n-a_{n+1}\|\leq\tfrac{1}{2^n}$ for each $n$, and it follows that $(a_n)_n$ is a Cauchy sequence.
We set $a:=\lim_n a_n$, which is a positive element in $A$.

Let $n\geq 1$.
It follows from \eqref{eq:elementFromPath} that
\[
\sum_{j=-\infty}^0 \tfrac{1}{2^n} f(\tfrac{j-1}{2^n})
\leq a
\leq \sum_{j=-\infty}^0 \tfrac{1}{2^n} f(\tfrac{j}{2^n}).
\]
For each $j$, since $a_m$ commutes with $f(\tfrac{j}{2^n})$ for $m\geq n$, it follows that $a$ commutes with $f(\tfrac{j}{2^n})$.
Consequently, $a$ commutes with $\sum_{j=-\infty}^0 \tfrac{1}{2^n} f(\tfrac{j-1}{2^n})$.
Given \emph{commuting} self-adjoint elements $x,y$ in a \ca{} that satisfy $x\leq y$, and given $t\in\RR$, it follows that $(x+t)_+\leq(y+t)_+$.
(For noncommuting elements, this does not necessarily hold.)
Given $k\in\ZZ_{\leq 0}$, we have
\[
\left( \left( \sum_{j=-\infty}^0 \tfrac{1}{2^n} f(\tfrac{j}{2^n}) \right) - \tfrac{k}{2^n} \right)_+
= \sum_{j=-\infty}^{k-1} \tfrac{1}{2^n} f(\tfrac{j}{2^n}),
\]
and therefore
\[
\sum_{j=-\infty}^{k-1} \tfrac{1}{2^n} f(\tfrac{j}{2^n})
\leq (a-\tfrac{k}{2^n})_+
\leq \sum_{j=-\infty}^{k} \tfrac{1}{2^n} f(\tfrac{j}{2^n}).
\]

It follows that
\[
f(\tfrac{k-1}{2^n}) 
\leq \supp( (a-\tfrac{k}{2^n})_+ )
\leq f(\tfrac{k}{2^n})
\]
for every $n\geq 1$ and every $k\in\ZZ$, $k\leq 0$.
Using condition~(1) in the definition of a path, we obtain
\[
f(\tfrac{k}{2^n})
= \sup_{l\geq 1} f(\tfrac{2^l k - 1}{2^{n+l}}) \leq \supp( (a-\tfrac{k}{2^n})_+ ),
\]
and so $\supp( (a-\tfrac{k}{2^n})_+ ) = f(\tfrac{k}{2^n})$ for every $n\geq 1$ and every $k\in\ZZ$, $k\leq 0$.
It follows that $f$ agrees with $f_a$ at every nonpositive dyadic number.
Since both $f$ and $f_a$ are paths, we deduce that $f=f_a$.
\end{proof}

\section{Diffuse and nowhere scattered functionals}
\label{sec:NWS}

In this section, we first recall basic properties of diffuse and atomic functionals.
We then introduce the main technical concept of this paper:
a positive functional is \emph{nowhere scattered} if it gives no weight to scattered ideal-quotients;
see \autoref{dfn:NWS}.
Equivalently, the functional gives no weight to elementary ideal-quotients, or it vanishes on minimal projections in quotients of the algebra;
see \autoref{prp:firstCharNWS}.
(A projection $p$ in a \ca{} $A$ is said to be minimal it $p\neq 0$ and $pAp=\CC p$.)
In the next section, we prove that nowhere scattered functionals admit Haar unitaries.

We show that every diffuse functional is nowhere scattered;
see \autoref{prp:diffuseImpliesNWS}.
A \ca{} is type~$\mathrm{I}$ if and only if every of its nowhere scattered states is diffuse;
see \autoref{prp:typeI}.
In forthcoming work, we will study the class of \ca{s} with the property that every positive functional is nowhere scattered.

\begin{pgr}
\label{pgr:diffuse}
Let $A$ be a \ca.
By a \emph{positive functional} on $A$ we mean a positive, linear map $\varphi\colon A\to\CC$.
Every positive functional is bounded and therefore extends uniquely to a normal, positive functional $A^{**}\to\CC$ which we also denote by~$\varphi$.

A positive functional $\varphi$ is \emph{diffuse} if $\varphi(e)=0$ for every minimal projection $e$ in $A^{**}$.
Equivalently, $\varphi(z_{\mathrm{at}})=0$, where $z_{\mathrm{at}}\in A^{**}$ denotes the supremum of all minimal projections in $A^{**}$.
A positive functional $\varphi$ is \emph{atomic} if $\varphi(1-z_{\mathrm{at}})=0$.
The notions of atomic and diffuse functionals on a \ca{} were introduced by Pedersen in \cite{Ped71AtomicDiffuse} using the concept of Baire operators.
It is straightforward to verify that the definitions in \cite{Ped71AtomicDiffuse} are equivalent to the ones above, and also equivalent to \cite[Definition~1.1]{Jen77ScatteredCAlg}.

By \cite[Proposition~4]{Ped71AtomicDiffuse}, a positive functional $\varphi$ is atomic if and only if $\varphi=\sum_{k=1}^\infty \alpha_k \varphi_k$ for a sequence $(\varphi_k)_k$ of pure states and positive coefficients $(\alpha_k)_k$ with $\sum_k\alpha_k<\infty$;
see also \cite[Theorem~1.2]{Jen77ScatteredCAlg}.

Every positive functional $\varphi$ admits a unique decomposition as a sum of an atomic, positive functional $\varphi_{\mathrm{a}}$ and a diffuse, positive functional $\varphi_{\mathrm{d}}$.
With $z_{\mathrm{at}}\in A^{**}$ as above, we have
\[
\varphi_{\mathrm{a}}(a)=\varphi(az_{\mathrm{at}}), \andSep
\varphi_{\mathrm{d}}(a)=\varphi(a(1-z_{\mathrm{at}}))
\]
for $a\in A$.
It follows that a positive functional is diffuse if and only if it does not dominated a nonzero multiple of a pure state.
\end{pgr}

In the next result, we use $\pi_\varphi\colon A\to\Bdd(H_\varphi)$ to denote the GNS-representation associated to a positive functional $\varphi$, and we let $\pi_\varphi(A)''\subseteq\Bdd(H_\varphi)$ denote the generated von Neumann algebra.
Recall that a von Neumann algebra is \emph{diffuse} if it contains no minimal projections.
It is called \emph{atomic} (sometimes `purely atomic') if its unit is the supremum of minimal projections; equivalently, it is a product of type~$\mathrm{I}$~factors.

\begin{lma}
\label{prp:diffuseGNS}
Let $\varphi\colon A\to\CC$ be a positive functional on a \ca{} $A$.
Then $\varphi$ is diffuse (atomic) if and only if $\pi_\varphi(A)''$ is diffuse (atomic).
\end{lma}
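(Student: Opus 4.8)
The plan is to route everything through the GNS data. Writing $\xi$ for the canonical cyclic vector, we have $\varphi(a)=\langle\pi_\varphi(a)\xi,\xi\rangle$, and $\pi_\varphi$ extends to a \emph{normal}, unital, surjective $*$-homomorphism $\tilde\pi_\varphi\colon A^{**}\to M$, where $M:=\pi_\varphi(A)''$. Since $\varphi$ and $\omega_\xi\circ\tilde\pi_\varphi$ (with $\omega_\xi=\langle(\,\cdot\,)\xi,\xi\rangle$ the vector state on $M$) are both normal and agree on the weak*-dense subalgebra $A\subseteq A^{**}$, they agree on all of $A^{**}$. Let $z_{\mathrm{at}}\in A^{**}$ and $w_{\mathrm{at}}\in M$ denote the suprema of the minimal projections in $A^{**}$ and in $M$, respectively. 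By \autoref{pgr:diffuse}, $\varphi$ is diffuse iff $\varphi(z_{\mathrm{at}})=0$ and atomic iff $\varphi(1-z_{\mathrm{at}})=0$, while $M$ is diffuse iff $w_{\mathrm{at}}=0$ and atomic iff $w_{\mathrm{at}}=1_M$. The whole statement will thus follow once I establish the identity $\tilde\pi_\varphi(z_{\mathrm{at}})=w_{\mathrm{at}}$ and show that $\omega_\xi$ annihilates neither $w_{\mathrm{at}}$ nor $1_M-w_{\mathrm{at}}$ unless they are zero.

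For the identity, I would use the central support of the representation. The kernel of the normal $*$-homomorphism $\tilde\pi_\varphi$ is a weak*-closed two-sided ideal, hence of the form $(1-z)A^{**}$ for a central projection $z\in A^{**}$, and $\tilde\pi_\varphi$ restricts to a normal $*$-isomorphism $zA^{**}\xrightarrow{\ \cong\ }M$. For a projection $p\leq z$, centrality of $z$ gives $pA^{**}p=p(zA^{**})p$, so $p$ is minimal in $A^{**}$ if and only if it is minimal in $zA^{**}$; since a $*$-isomorphism preserves minimality, $\tilde\pi_\varphi$ sends the minimal projections of $A^{**}$ lying below $z$ bijectively onto those of $M$, while it annihilates those lying below $1-z$. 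Interchanging $\tilde\pi_\varphi$ with suprema by normality then yields $\tilde\pi_\varphi(z_{\mathrm{at}})=w_{\mathrm{at}}$, and therefore $\varphi(z_{\mathrm{at}})=\omega_\xi(w_{\mathrm{at}})$ and $\varphi(1-z_{\mathrm{at}})=\omega_\xi(1_M-w_{\mathrm{at}})$ using $\tilde\pi_\varphi(1)=1_M$.

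It then remains to note two facts about $M$. First, $w_{\mathrm{at}}$ (and hence also $1_M-w_{\mathrm{at}}$) is \emph{central} in $M$: if $e$ is a minimal projection and $u\in M$ is unitary, then $ueu^*$ is again minimal, so the set of minimal projections is conjugation-invariant and its supremum commutes with every unitary. Second, the cyclic vector $\xi$ detects nonzero central projections: if $c\in M$ is central with $\omega_\xi(c)=\|c\xi\|^2=0$, then $c\xi=0$, whence $c(a\xi)=a(c\xi)=0$ for all $a\in M$, and density of $M\xi$ forces $c=0$. Applying this to $c=w_{\mathrm{at}}$ and to $c=1_M-w_{\mathrm{at}}$ gives $\varphi(z_{\mathrm{at}})=0\iff w_{\mathrm{at}}=0$ and $\varphi(1-z_{\mathrm{at}})=0\iff w_{\mathrm{at}}=1_M$, which are exactly the two claimed equivalences. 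The main point requiring care — and the reason the argument is organized around the global projection $z_{\mathrm{at}}$ rather than individual minimal projections — is that for a single minimal projection $e\leq z$ one may well have $\varphi(e)=\omega_\xi(\tilde\pi_\varphi(e))=0$ even when $M$ is not diffuse, since $\xi$ can be orthogonal to the range of the corresponding minimal projection of $M$. What rescues the statement is precisely the \emph{centrality} of $w_{\mathrm{at}}$, which lets the faithfulness of $\omega_\xi$ on central projections (a direct consequence of cyclicity) do the work.
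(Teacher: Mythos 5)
Your proof is correct and follows essentially the same route as the paper's: both identify $\pi_\varphi(A)''$ with $zA^{**}$ for $z$ the central cover of the support projection of $\varphi$, and both exploit the centrality of the atomic projection $z_{\mathrm{at}}$ (your $w_{\mathrm{at}}$). Your use of cyclicity of $\xi$ to show that $\omega_\xi$ is faithful on central projections of $M$ is just the vector-state formulation of the paper's step $s_\varphi\leq 1-z_{\mathrm{at}}\iff c(s_\varphi)\leq 1-z_{\mathrm{at}}$, so the two arguments differ only in bookkeeping.
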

\begin{proof}
Let $s_\varphi$ denote the support projection of $\varphi$ in $A^{**}$, and let $c(s_\varphi)$ denote its central cover.
We have
\[
\pi_\varphi(A)'' \cong c(s_\varphi)A^{**};
\]
see for example \cite[III.2.2.23f]{Bla06OpAlgs}.
Let $z_{\mathrm{at}}\in A^{**}$ denote the supremum of all minimal projections in $A^{**}$.

Now $\varphi$ is diffuse if and only if $\varphi(z_{\mathrm{at}})=0$, which is equivalent to $s_\varphi\leq 1-z_{\mathrm{at}}$.
Since $z_{\mathrm{at}}$ is central, this is also equivalent to $c(s_\varphi)\leq 1-z_{\mathrm{at}}$.
Given a central projection $z\in A^{**}$, note that $zA^{**}$ is diffuse if and only if $z\leq 1-z_{\mathrm{at}}$.
Thus, $\varphi$ is diffuse if and only if $c(s_\varphi)A^{**}$ is diffuse.
The atomic case is proved analogously.
\end{proof}

\begin{exa}
\label{exa:diffuseMeasure}
Let $X$ be a locally compact, Hausdorff space.
Positive functionals on $C_0(X)$ naturally correspond to bounded, positive Borel measures on $X$.
Given a measure $\mu$, the corresponding functional $\varphi_\mu\colon C_0(X)\to\CC$ is given by $\varphi_\mu(f)=\int_X f(x)d\mu(x)$.
Then $\varphi_\mu$ is diffuse if and only if $\mu$ is diffuse, that is, has not atoms. 
\end{exa}


\begin{pgr}
A \ca{} $A$ is \emph{scattered} if every positive functional on $A$ is atomic;
see \cite[Definition~2.1]{Jen77ScatteredCAlg}.
This is known to be equivalent to many other properties.
For example, $A$ is scattered if and only if every self-adjoint element in $A$ has countable spectrum, if and only if $A$ has a composition series $(K_\alpha)_{0\leq\alpha\leq\beta}$ such that the successive quotients $K_{\alpha+1}/K_\alpha$ are elementary (a \ca{} is \emph{elementary} if it is isomorphic to the algebra of compact operators on some Hilbert space);
see \cite[Theorem~1.4]{GhaKos18NCCantorBendixson} and \cite[Theorem~2]{Jen78ScatteredCAlg2}.
\end{pgr}

Given a \ca{} $A$, an \emph{ideal-quotient} of $A$ is a (closed, two-sided) ideal of a quotient of $A$.
Using the correspondence between ideals (quotients) of a \ca{} and open (closed) subsets of its primitive ideal space, it follows that ideal-quotients of $A$ correspond to locally closed subsets of the primitive ideal space of $A$.

\begin{dfn}
\label{dfn:NWS}
A positive functional $\varphi\colon A\to\CC$ on a \ca{} $A$ is \emph{nowhere scattered} if $\|\varphi|_I\|=\|\varphi|_J\|$ for all ideals $I\subseteq J\subseteq A$ such that $J/I$ is scattered.
\end{dfn}

\begin{rmk}
\label{rmk:NWS}
Let $\varphi\colon A\to\CC$ be a positive functional on a \ca{} $A$.
Given ideals $I\subseteq J\subseteq A$, let $p,q\in\OP(A)$ be the corresponding central open projections.
Then $\|\varphi|_I\|=\varphi(p)$ and $\|\varphi|_J\|=\varphi(q)$.
Thus, we have $\|\varphi|_I\|=\|\varphi|_J\|$ if and only if $\varphi(p)=\varphi(q)$, that is, $\varphi(q-p)=0$.
\end{rmk}

\begin{prp}
\label{prp:firstCharNWS}
Let $A$ be a \ca{}, and let $\varphi\colon A\to\CC$ be a positive functional.
Then the following are equivalent:
\begin{enumerate}
\item
$\varphi$ is nowhere scattered;
\item
$\|\varphi|_I\|=\|\varphi|_J\|$ for all ideals $I\subseteq J\subseteq A$ such that $J/I$ is elementary;
\item
for every ideal $I\subseteq A$ and every minimal projection $e\in A/I$, we have $\varphi(e)=0$ (viewing $e\in A/I \subseteq (A/I)^{**} \subseteq A^{**}$).
\end{enumerate}
\end{prp}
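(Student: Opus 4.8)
The plan is to prove the cycle of implications $(1)\Rightarrow(2)\Rightarrow(3)\Rightarrow(1)$. Throughout I would use the reformulation from \autoref{rmk:NWS}: for ideals $I\subseteq J\subseteq A$ with corresponding central open projections $p\leq q$ in $A^{**}$, one has $\|\varphi|_I\|=\|\varphi|_J\|$ if and only if $\varphi(q-p)=0$. The implication $(1)\Rightarrow(2)$ is then immediate, since every elementary \ca{} is scattered (its self-adjoint elements are compact and hence have countable spectrum), so the defining condition of nowhere scatteredness applies in particular to elementary subquotients.

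For $(2)\Rightarrow(3)$, let $I\subseteq A$ be an ideal and let $e\in A/I$ be a minimal projection, so that $e(A/I)e=\CC e$. The key input is the well-known fact that the closed two-sided ideal of $A/I$ generated by a minimal projection is elementary; concretely, it is isomorphic to $\Cpct(H)$, where $H$ is the Hilbert space obtained by completing the left ideal $(A/I)e$ under the inner product determined by $x^{*}y\in e(A/I)e=\CC e$. Writing $J$ for the ideal of $A$ with $I\subseteq J$ corresponding to this elementary ideal $J/I$ of $A/I$, hypothesis~(2) gives $\varphi(q-p)=0$, where $p\leq q$ are the central open projections of $I$ and $J$. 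Since $e\in J/I$, we have $e\leq q-p$, and positivity of the normal extension of $\varphi$ yields $0\leq\varphi(e)\leq\varphi(q-p)=0$, as desired.

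The substantive implication is $(3)\Rightarrow(1)$, which I would establish by transfinite induction. Let $I\subseteq J\subseteq A$ with $J/I$ scattered. Recall that a scattered \ca{} admits a composition series $(L_\alpha)_{0\leq\alpha\leq\beta}$ with elementary successive quotients $L_{\alpha+1}/L_\alpha$ and with $L_\lambda=\overline{\bigcup_{\alpha<\lambda}L_\alpha}$ at limit ordinals. Transporting this series through the lattice isomorphism between ideals of $J/I$ and ideals of $A$ lying between $I$ and $J$, we obtain $I=J_0\subseteq J_1\subseteq\cdots\subseteq J_\beta=J$ with central open projections $p=p_0\leq p_1\leq\cdots\leq p_\beta=q$. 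I would prove $\varphi(p_\alpha)=\varphi(p)$ for all $\alpha$ by induction. At a successor step, $J_{\alpha+1}/J_\alpha$ is an elementary ideal of $A/J_\alpha$; choosing an orthonormal basis of the underlying Hilbert space gives pairwise orthogonal rank-one projections $e_i\in J_{\alpha+1}/J_\alpha$, each of which satisfies $e_i(A/J_\alpha)e_i=e_i(J_{\alpha+1}/J_\alpha)e_i=\CC e_i$ and is therefore minimal in $A/J_\alpha$. Hypothesis~(3), applied to the ideal $J_\alpha$, gives $\varphi(e_i)=0$; since the finite sums $\sum_{i\in F}e_i$ increase to $p_{\alpha+1}-p_\alpha$, normality of $\varphi$ forces $\varphi(p_{\alpha+1}-p_\alpha)=0$, whence $\varphi(p_{\alpha+1})=\varphi(p_\alpha)=\varphi(p)$. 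At a limit ordinal $\lambda$, the ideal $\overline{\bigcup_{\alpha<\lambda}J_\alpha}$ has central open projection $p_\lambda=\sup_{\alpha<\lambda}p_\alpha$, so normality gives $\varphi(p_\lambda)=\sup_{\alpha<\lambda}\varphi(p_\alpha)=\varphi(p)$. Taking $\alpha=\beta$ yields $\varphi(q)=\varphi(p)$, that is, $\|\varphi|_I\|=\|\varphi|_J\|$.

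I expect the main obstacles to be two points that are easy to state but must be handled with care. First, the identification of the ideal generated by a minimal projection as elementary, together with its dual, the observation that a rank-one projection of an elementary ideal $J_{\alpha+1}/J_\alpha$ remains minimal in the larger quotient $A/J_\alpha$ (which rests on the equality $e(A/J_\alpha)e=e(J_{\alpha+1}/J_\alpha)e$, valid because $J_{\alpha+1}/J_\alpha$ is an ideal). Second, the passage to limit ordinals in the transfinite induction, where one must know that the central open projection of a directed union of ideals is the supremum of the individual central open projections, and then invoke normality of $\varphi$ to pass the supremum through. Neither step is deep, but together they are precisely what reduces arbitrary scattered subquotients to the elementary ones controlled by hypothesis~(2).
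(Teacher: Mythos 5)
Your argument is correct, but it runs the cycle in the opposite direction from the paper, and the two routes differ in which structural fact about minimal projections they lean on. The paper proves $(3)\Rightarrow(2)\Rightarrow(1)\Rightarrow(3)$: its $(3)\Rightarrow(2)$ is your successor-step argument (an approximate unit of finite-rank projections in the elementary subquotient, each a sum of minimal projections killed by~(3), plus normality), its $(2)\Rightarrow(1)$ is your transfinite induction along Jensen's composition series, and its $(1)\Rightarrow(3)$ invokes the theorem of Ghasemi--Koszmider that the ideal of $A/I$ generated by \emph{all} minimal projections is scattered, then applies nowhere scatteredness to that single ideal. You instead prove $(1)\Rightarrow(2)$ trivially (elementary implies scattered) and close the loop with $(2)\Rightarrow(3)$ via the classical fact that the ideal generated by a \emph{single} minimal projection is elementary (the completion of $(A/I)e$ realizing it as $\Cpct(H)$); your $(3)\Rightarrow(1)$ then folds the paper's two remaining implications into one induction, using minimal projections rather than the norm condition at successor steps. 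The net effect is that your proof replaces the noncommutative Cantor--Bendixson input by an older, more elementary fact about the socle, at the modest cost of a slightly heavier inductive step; both proofs rely identically on Jensen's composition-series theorem and on normality of the extension of $\varphi$ to pass suprema at limit ordinals. The two points you flag as delicate (minimality of rank-one projections of an ideal in the ambient quotient, via $e(A/J_\alpha)e=e(J_{\alpha+1}/J_\alpha)e$, and the identification of $p_\lambda$ with $\sup_{\alpha<\lambda}p_\alpha$ at limit ordinals) are exactly the right ones and are handled correctly.
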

\begin{proof}
To show that~(3) implies~(2), let $I\subseteq J\subseteq A$ be ideals such that $J/I$ is elementary.
Let $p,q\in\OP(A)$ denote the central, open projections corresponding to~$I$ and $J$.
We can choose an approximate unit of finite-rank projections $(e_\lambda)_\lambda$ in~$J/I$.
It follows from the assumption that $\varphi(e_\lambda)=0$ for each $\lambda$.
We get
\[
\varphi(q-p)=\varphi\big( \sup_\lambda e_\lambda \big) = 0.
\]

To show that~(2) implies~(1), let $I\subseteq J\subseteq A$ be ideals such that $J/I$ is scattered.
Let $\pi\colon J\to J/I$ denote the quotient map.
By \cite[Theorem~2]{Jen78ScatteredCAlg2}, there exists a composition series $(K_\alpha)_{0\leq\alpha\leq\beta}$ for $J/I$ such that $K_{\alpha+1}/K_\alpha$ is elementary for each $\alpha<\beta$.
In particular, $K_0=\{0\}$ and $K_\beta=J/I$.
For each $\alpha\leq\beta$, let $p_\alpha\in\OP(A)$ be the central open projection corresponding to the ideal $\pi^{-1}(K_\alpha)$.

Using transfinite induction, we show that $\varphi(p_\alpha-p_0)=0$ for each $\alpha\leq\beta$.
This is clear for $\alpha=0$.
Assuming that it holds for some $\alpha$, we use that $K_{\alpha+1}/K_\alpha$ is elementary and thus $\varphi(p_{\alpha+1}-p_\alpha)=0$ to deduce that $\varphi(p_{\alpha+1}-p_0)=0$.
If $\alpha$ is a limit ordinal and we have $\varphi(p_{\alpha'}-p_0)=0$ for every $\alpha'<\alpha$, then
\[
\varphi(p_{\alpha}-p_0)
= \varphi\left( \sup_{\alpha'<\alpha}(p_{\alpha'}-p_0) \right)
= \sup_{\alpha'<\alpha}\varphi(p_{\alpha'}-p_0)
= 0.
\]
Thus, $\varphi(p_{\beta}-p_0)=0$, and so
\[
\|\varphi|_I\|
= \varphi(p_0) 
= \varphi(p_\beta)
= \|\varphi|_J\|.
\]

To show that~(1) implies~(3), let $I\subseteq A$ be an ideal, and let $e\in A/I$ be a minimal projection.
Let $\pi\colon A\to A/I$ denote the quotient map.
Let $K\subseteq A/I$ denote the sub-\ca{} generated by all minimal projections in $A/I$.
By \cite[Theorem~1.2]{GhaKos18NCCantorBendixson}, $K$ is scattered and an ideal of $A/I$.
Thus, if $p$ and $q$ denote the central, open projections corresponding to the ideals $I$ and $\pi^{-1}(K)$, then $\varphi(q-p)=0$.
Since $e\leq q-p$, it follows that $\varphi(e)=0$.
\end{proof}

\begin{lma}
\label{prp:NWS-RestrictHer}
Let $A$ be a \ca{}, let $B\subseteq A$ be a hereditary sub-\ca{}, and let $\varphi\colon A\to\CC$ be a nowhere scattered, positive functional.
Then $\varphi|_{B}$ is nowhere scattered.
\end{lma}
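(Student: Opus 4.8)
The plan is to verify condition~(3) of \autoref{prp:firstCharNWS} for the restricted functional $\varphi|_B$. Concretely, I would fix an ideal $I'\subseteq B$ and a minimal projection $e$ in the quotient $B/I'$, and show that $\varphi(e)=0$, where $e$ is viewed inside $(B/I')^{**}\subseteq B^{**}\subseteq A^{**}$ and $\varphi$ denotes its normal extension to $A^{**}$. This is legitimate because the normal extension of $\varphi|_B$ to $B^{**}\cong pA^{**}p$, with $p$ the open projection of $B$, is exactly the restriction of the normal extension of $\varphi$: the latter restriction is normal and agrees with $\varphi|_B$ on $B$, so it coincides with the normal extension of $\varphi|_B$ by uniqueness.

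The first step is to transport the situation into a quotient of $A$. Using the standard correspondence between ideals of the hereditary sub-\ca{} $B$ and ideals of $A$, I would set $J:=\overline{AI'A}$, so that $J\cap B=I'$ and the quotient map $\pi\colon A\to A/J$ restricts to a surjection $B\to\pi(B)$ with kernel $I'$. This identifies $B/I'$ with the sub-\ca{} $\pi(B)=(B+J)/J$ of $A/J$, which is again hereditary, since the image of a hereditary sub-\ca{} under a quotient map is hereditary.

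The key algebraic observation is that a minimal projection of a hereditary sub-\ca{} is automatically minimal in the ambient algebra: if $e$ is a minimal projection in a hereditary sub-\ca{} $D$ of a \ca{} $C$, then $eCe\subseteq D$ (because $ece\leq\|c\|e\in D$ for $c\in C_+$ and $D$ is hereditary), whence $eCe=eDe=\CC e$. Applying this to $e\in\pi(B)\subseteq A/J$, the image $\bar e$ of $e$ is a minimal projection of $A/J$. Now \autoref{prp:firstCharNWS}(3), applied to the nowhere scattered functional $\varphi$ and the ideal $J$, yields $\varphi(\bar e)=0$.

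It remains to match the two ways of viewing our projection inside $A^{**}$: as $e\in(B/I')^{**}\subseteq B^{**}\subseteq A^{**}$, and as $\bar e\in(A/J)^{**}\subseteq A^{**}$. This is the delicate point, and I expect it to be the main obstacle. The issue is that the inclusion $(B/I')^{**}\hookrightarrow A^{**}$ cuts down by the open projection of $B$, whereas $(A/J)^{**}\hookrightarrow A^{**}$ cuts down by the central projection of $J$. The identification follows from the compatibility of these inclusions with the \mbox{*-homomorphism} $B/I'\xrightarrow{\cong}\pi(B)\hookrightarrow A/J$ induced by $\pi$: writing $c$ for the central open projection of $J$ in $A^{**}$, the central open projection of $I'=J\cap B$ in $B^{**}=pA^{**}p$ is $cp$, so that $e$, lying below $p(1-c)\leq 1-c$, is fixed by the cut-down $x\mapsto(1-c)x$ and therefore coincides with $\bar e$ as an element of $A^{**}$. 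Hence $\varphi|_B(e)=\varphi(e)=\varphi(\bar e)=0$, so that $\varphi|_B$ satisfies condition~(3), completing the proof.
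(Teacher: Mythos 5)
Your proof is correct and takes essentially the same route as the paper: pass to the ideal $J$ of $A$ generated by $I'$, use $J\cap B=I'$ to identify $B/I'$ with the hereditary sub-\ca{} $\pi(B)\subseteq A/J$, observe that $e$ remains a minimal projection of $A/J$, and invoke condition~(3) of \autoref{prp:firstCharNWS}. The only difference is that you spell out two points the paper leaves implicit --- that a minimal projection of a hereditary sub-\ca{} is minimal in the ambient algebra, and that the bidual identifications of $e$ inside $A^{**}$ are compatible --- and both verifications are correct.
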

\begin{proof}
We use the characterization of unscattered functionals through minimal projections in quotients from condition~(3) in \autoref{prp:firstCharNWS}.
Let $I\subseteq B$ be an ideal, and let $e\in B/I$ be a minimal projection.
We need to show that $\varphi|_B(e)=0$.
Let $J\subseteq A$ be the ideal of $A$ generated by $I$, and let $\pi\colon A\to A/J$ denote the quotient map.
We have $I=B\cap J$, and so $\pi(B)$ is a hereditary sub-\ca{} of $A/J$ and $\pi(B)\cong B/I$.
Thus, $e$ is also a minimal projection of $A/J$.
Since $\varphi$ is nowhere scattered, we get $\varphi(e)=0$, and so $\varphi|_B(e)=0$.
\end{proof}

\begin{prp}
\label{prp:diffuseImpliesNWS}
Every diffuse, positive functional is nowhere scattered.
\end{prp}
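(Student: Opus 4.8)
The plan is to verify the characterization of nowhere scatteredness through minimal projections in quotients given by condition~(3) of \autoref{prp:firstCharNWS}. So let $\varphi$ be a diffuse, positive functional on a \ca{} $A$, fix an ideal $I\subseteq A$ and a minimal projection $e\in A/I$, and view $e$ in $A^{**}$ via the canonical embedding $(A/I)^{**}\subseteq A^{**}$. Since $\varphi$ vanishes on every minimal projection of $A^{**}$ by the definition of diffuseness (see \autoref{pgr:diffuse}), it suffices to show that the image of $e$ in $A^{**}$ is again a minimal projection; then $\varphi(e)=0$ is immediate and condition~(3) follows.

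To set this up, I would record the standard description of $(A/I)^{**}$ as a central direct summand of $A^{**}$. Let $p\in A^{**}$ be the central open projection corresponding to $I$, so that the kernel of the normal extension $\pi^{**}\colon A^{**}\to(A/I)^{**}$ of the quotient map is $pA^{**}$, and $(A/I)^{**}$ is canonically identified with the summand $(1-p)A^{**}$. Under this identification, $e\in A/I$ is carried to a projection $\bar e\in(1-p)A^{**}\subseteq A^{**}$ satisfying $\bar e\leq 1-p$.

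The argument then splits into two steps. First, I would check that $e$ is minimal not merely in the \ca{} $A/I$ but already in the von Neumann algebra $(A/I)^{**}$: here $e(A/I)e=\CC e$ is finite-dimensional, hence weak*-closed, while $e(A/I)e$ is weak*-dense in the corner $e(A/I)^{**}e$ because left and right multiplication by the fixed projection $e$ are weak*-continuous; therefore $e(A/I)^{**}e=\CC e$. Second, I would transport minimality to $A^{**}$: using $\bar e\leq 1-p$ together with the centrality of $p$, the corner of $A^{**}$ at $\bar e$ satisfies
\[
\bar e A^{**}\bar e=\bar e(1-p)A^{**}(1-p)\bar e=e(A/I)^{**}e=\CC\bar e,
\]
so that $\bar e$ is a minimal projection of $A^{**}$. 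Diffuseness of $\varphi$ now yields $\varphi(e)=\varphi(\bar e)=0$, completing the verification of condition~(3).

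The main obstacle is entirely bookkeeping rather than analysis: one must keep careful track of the identification $(A/I)^{**}\cong(1-p)A^{**}$ and confirm that the notion of minimality survives both the passage from the \ca{} $A/I$ to its bidual and the passage from the central summand $(1-p)A^{**}$ to the full bidual $A^{**}$. Once these identifications are in place, no estimate or construction is required, and the statement reduces to the defining property of a diffuse functional.
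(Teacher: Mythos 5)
Your proposal is correct and follows essentially the same route as the paper: verify condition~(3) of \autoref{prp:firstCharNWS} by observing that a minimal projection $e\in A/I$ remains minimal in $(A/I)^{**}$ and hence in $A^{**}$, so that diffuseness gives $\varphi(e)=0$. The paper states the two minimality transfers without proof, whereas you supply the (correct) weak*-density and central-summand arguments; otherwise the proofs coincide.
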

\begin{proof}
Let $A$ be a \ca{}, and let $\varphi\colon A\to\CC$ be a diffuse, positive functional.
Let $I\subseteq A$ be an ideal, and let $e\in A/I$ be a minimal projection.
Then $e$ is also a minimal projection in $(A/I)^{**}$, and thus in $A^{**}$.
Since $\varphi$ is diffuse, we get $\varphi(e)=0$.
By \autoref{prp:firstCharNWS}, it follows that $\varphi$ is nowhere scattered.
\end{proof}

The converse of \autoref{prp:diffuseImpliesNWS} does not hold.
In fact, we will show that a \ca{} is not type~$\mathrm{I}$ if and only if it has a nowhere scattered state that is pure (and therefore not diffuse);
see \autoref{prp:typeI}.

A nowhere scattered functional is diffuse if it is also tracial (\autoref{prp:traceDiffuseNWS}) or if it is a normal functional on a von Neumann algbera (\autoref{prp:vNdiffuse}).

\begin{lma}
\label{prp:pureNWS}
Let $\varphi$ be a pure state on a \ca{} $A$, and let $\pi_\varphi\colon A\to\Bdd(H_\varphi)$ be the induced GNS-representation.
Then $\varphi$ is nowhere scattered if and only if $\pi_\varphi(A)\cap\Cpct(H_\varphi)=\{0\}$.
\end{lma}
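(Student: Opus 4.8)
The plan is to prove both implications by contraposition, exploiting that, since $\varphi$ is pure, $\pi_\varphi$ is irreducible and hence $\pi_\varphi(A)''=\Bdd(H_\varphi)$. I would set up the second-dual picture as in the proof of \autoref{prp:diffuseGNS}: with $s_\varphi$ the support projection of $\varphi$, one has $\pi_\varphi(A)''\cong c(s_\varphi)A^{**}$, and under this identification the normal extension $\Pi\colon A^{**}\to\Bdd(H_\varphi)$ of $\pi_\varphi$ is the cut-down by the central projection $c(s_\varphi)$. Writing $\xi_\varphi$ for the cyclic vector, the normal extension of $\varphi$ to $A^{**}$ is then $\varphi=\langle\Pi(\cdot)\xi_\varphi,\xi_\varphi\rangle$, and $\xi_\varphi\xi_\varphi^{*}$ is the rank-one projection onto $\CC\xi_\varphi$.

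For the implication that $\varphi$ nowhere scattered forces $\pi_\varphi(A)\cap\Cpct(H_\varphi)=\{0\}$, I argue contrapositively. If $\pi_\varphi(A)$ contains a nonzero compact operator, then, acting irreducibly, it contains all of $\Cpct(H_\varphi)$ by the classical fact on irreducible algebras of operators. Put $I_0=\ker\pi_\varphi$ and $K=\pi_\varphi^{-1}(\Cpct(H_\varphi))$; these are ideals of $A$ with $I_0\subseteq K$ and $K/I_0\cong\Cpct(H_\varphi)$, which is elementary and hence scattered. Now $\|\varphi|_{I_0}\|=0$ since $\varphi$ vanishes on $\ker\pi_\varphi$, whereas lifting the rank-one projection $\xi_\varphi\xi_\varphi^{*}\in\Cpct(H_\varphi)=\pi_\varphi(K)$ to a contraction in $K$ shows $\|\varphi|_K\|=1$. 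Thus the defining equality of \autoref{dfn:NWS} fails for the pair $I_0\subseteq K$, so $\varphi$ is not nowhere scattered.

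The substantial direction is that $\pi_\varphi(A)\cap\Cpct(H_\varphi)=\{0\}$ implies $\varphi$ nowhere scattered, again by contraposition. If $\varphi$ is not nowhere scattered, then by \autoref{prp:firstCharNWS} there is an ideal $I\subseteq A$ and a minimal projection $e\in A/I$ with $\varphi(e)\neq0$, where $e$ is viewed in $A^{**}$ via $A/I\subseteq(A/I)^{**}=(1-p)A^{**}\subseteq A^{**}$ for the central open projection $p$ of $I$; let $q\colon A\to A/I$ be the quotient map and choose a lift $a\in A$ with $q(a)=e=(1-p)\hat a$. Since $1-p$ is central and $\Bdd(H_\varphi)$ is a factor, $\Pi(1-p)\in\{0,1\}$; and as $\varphi(e)=\langle\Pi(e)\xi_\varphi,\xi_\varphi\rangle\neq0$ forces $\Pi(e)\neq0$, we must have $\Pi(1-p)=1$, whence $\Pi(e)=\Pi(1-p)\pi_\varphi(a)=\pi_\varphi(a)=:E$, a nonzero projection lying in $\pi_\varphi(A)$.

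It then remains to see that $E$ is compact, and here the minimality of $e$ is decisive: for any $a'\in A$ one computes $E\pi_\varphi(a')E=\Pi(e\,\hat{a'}\,e)=\Pi(e\,q(a')\,e)$, using $e(1-p)=e$, and $e\,q(a')\,e\in e(A/I)e=\CC e$ by minimality, so this lies in $\CC E$. Hence $E\pi_\varphi(A)E=\CC E$, and passing to the weak closure (using $\pi_\varphi(A)''=\Bdd(H_\varphi)$) gives $E\Bdd(H_\varphi)E=\CC E$, so $E$ is a rank-one projection; in particular $E\in\pi_\varphi(A)\cap\Cpct(H_\varphi)$ is nonzero, the desired contradiction. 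I expect the main obstacle to be the bookkeeping in the second-dual picture, namely verifying that the normal extension of $\varphi$ evaluates the quotient projection as $\langle\Pi(e)\xi_\varphi,\xi_\varphi\rangle$ and that $\Pi(e)=\pi_\varphi(a)$, since this is precisely what bridges the minimal projection living in a quotient of $A$ with the representation $\pi_\varphi$; once this identification is secured, the minimality computation is routine.
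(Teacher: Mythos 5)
Your proof is correct. The forward direction --- producing the elementary ideal-quotient $K/I_0\cong\Cpct(H_\varphi)$ sitting over $\ker\pi_\varphi$ and observing that $\varphi$ vanishes on $I_0$ but has norm one on $K$ --- is essentially the paper's argument. For the converse you take a genuinely different route. The paper negates \autoref{dfn:NWS} directly: it picks ideals $I\subseteq J$ with $J/I$ scattered and $\|\varphi|_I\|<\|\varphi|_J\|$, uses that the restriction of a pure state to an ideal is pure or zero to get $\|\varphi|_J\|=1$ and hence that $\pi_\varphi|_J$ is a nonzero irreducible representation factoring through the scattered, hence type~$\mathrm{I}$, algebra $J/I$, and concludes that $\pi_\varphi(J)$ contains a nonzero compact operator. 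You instead negate via the minimal-projection characterization of \autoref{prp:firstCharNWS} and run a bidual computation: the minimal projection $e\in A/I$ with $\varphi(e)\neq 0$ is sent by the normal extension $\Pi$ to a projection $E=\pi_\varphi(a)$ lying in $\pi_\varphi(A)$ (since $\Pi(1-p)$ is a central projection of the factor $\Bdd(H_\varphi)$ and cannot vanish), and minimality of $e$ forces $E\Bdd(H_\varphi)E=\CC E$, so $E$ is rank one. Your version is more self-contained, avoiding the appeal to the structure theory of scattered/type~$\mathrm{I}$ \ca{s}, at the cost of the second-dual bookkeeping you flag; all the identifications you worry about ($\varphi=\langle\Pi(\cdot)\xi_\varphi,\xi_\varphi\rangle$ on $A^{**}$, $q^{**}=(1-p)\,\cdot\,$, and $\Pi(e)=\pi_\varphi(a)$) are standard and hold exactly as you state them.
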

\begin{proof}
Set $L:=\ker(\pi_\varphi)$.
The restriction of a pure state to an ideal is either a pure state or zero.
Thus, given an ideal $I\subseteq A$, we either have $\|\varphi|_I\|=1$ (which happens precisely if $I$ is not contained in $L$) or $\varphi|_I=0$ (which happens if and only if $I\subseteq L$).

To show that forward implication, assume that $\pi_\varphi(A)\cap\Cpct(H_\varphi)\neq\{0\}$.
Since $\pi_\varphi$ is irreducible, it follows that $\Cpct(H_\varphi)\subseteq\pi_\varphi(A)$;
see \cite[Corollary~IV.1.2.5]{Bla06OpAlgs}
Set $J:=\pi_\varphi^{-1}(\Cpct(H_\varphi))$.
Then $J/L\cong \Cpct(H_\varphi)$ is elementary and $\|\varphi|_L\|=0<1=\|\varphi|_J\|$, showing that $\varphi$ is not nowhere scattered.

To show the converse implications, assume that $\varphi$ is not nowhere scattered.
Choose ideals $I\subseteq J\subseteq A$ such that $J/I$ is scattered and such that $\|\varphi|_I\|<\|\varphi|_J\|$.
This forces $\|\varphi|_J\|=1$, and consequently $J$ is not contained in $L$.
It follows that the restriction of $\pi_\varphi$ to $J$ is a nonzero, irreducible representation.
Since $J$ is scattered, and therefore of type~$\mathrm{I}$, it follows that $\pi_\varphi(J)$ contains a nonzero, compact operator.
Hence, $\pi_\varphi(A)\cap \Cpct(H_\varphi)\neq\{0\}$.
\end{proof}

\begin{lma}
\label{prp:dominatedNWS}
Let $\varphi$ and $\psi$ be positive functionals on a \ca{} satisfying $\psi\leq\varphi$.
Assume that $\varphi$ is nowhere scattered.
Then $\psi$ is nowhere scattered.
\end{lma}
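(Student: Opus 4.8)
The plan is to reduce the claim to the characterization of nowhere scatteredness through minimal projections in quotients, namely condition~(3) of \autoref{prp:firstCharNWS}, and then to exploit the domination $\psi\leq\varphi$ directly. Concretely, I would aim to verify that for every ideal $I\subseteq A$ and every minimal projection $e\in A/I$ we have $\psi(e)=0$, where $e$ is viewed inside $A^{**}$ via the embeddings $A/I\subseteq(A/I)^{**}\subseteq A^{**}$. By \autoref{prp:firstCharNWS} this is exactly what it means for $\psi$ to be nowhere scattered.

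The one point that needs an argument is that the domination $\psi\leq\varphi$, which a priori holds only for the functionals on $A$, persists between their normal extensions to $A^{**}$. Since $\varphi-\psi$ is a positive functional on $A$, its unique normal extension to $A^{**}$ is again positive; by uniqueness and linearity of the normal extension (see \autoref{pgr:diffuse}) this extension coincides with the difference of the normal extensions of $\varphi$ and of $\psi$. Hence $\psi\leq\varphi$ also holds on $A^{**}$, and in particular $0\leq\psi(x)\leq\varphi(x)$ for every positive $x\in A^{**}$.

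With this in hand the remainder is immediate. Given an ideal $I\subseteq A$ and a minimal projection $e\in A/I$, regard $e$ as a positive element of $A^{**}$. Since $\varphi$ is nowhere scattered, condition~(3) of \autoref{prp:firstCharNWS} gives $\varphi(e)=0$, whence $0\leq\psi(e)\leq\varphi(e)=0$ and so $\psi(e)=0$. As $I$ and $e$ were arbitrary, condition~(3) of \autoref{prp:firstCharNWS} now applies to $\psi$ and shows that $\psi$ is nowhere scattered. The only genuinely delicate step is the transfer of the inequality to the bidual; once that is settled, the conclusion is purely formal.
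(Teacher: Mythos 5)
Your proof is correct and rests on the same mechanism as the paper's: the inequality $\psi\leq\varphi$ passes to the normal extensions on $A^{**}$, so $\psi$ vanishes on every positive element of $A^{**}$ on which $\varphi$ vanishes. The only difference is which equivalent formulation you feed this into: the paper applies it to the difference $q-p$ of central open projections coming straight from \autoref{dfn:NWS} (via \autoref{rmk:NWS}), whereas you apply it to minimal projections in quotients via condition~(3) of \autoref{prp:firstCharNWS}; both routes are equally valid since that equivalence is established beforehand, though the paper's version is marginally more self-contained in that it does not invoke the characterization theorem. Your explicit verification that the domination persists on the bidual -- via positivity of the normal extension of $\varphi-\psi$ and uniqueness of normal extensions -- is a point the paper leaves implicit, and it is correctly argued.
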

\begin{proof}
Let $I\subseteq J\subseteq A$ be ideals such that $J/I$ is scattered.
Let $p$ and $q$ be the central, open projections corresponding to $I$ and $J$, respectively.
Then
\[
\|\psi|_J\| - \|\psi|_I\|
= \psi(q-p)
\leq \varphi(q-p)
= \|\varphi|_J\| - \|\varphi|_I\|
= 0,
\]
as desired.
See also \autoref{rmk:NWS}.
\end{proof}

\begin{prp}
\label{prp:typeI}
Let $A$ be a \ca.
Then $A$ is of type~$\mathrm{I}$ if and only if every nowhere scattered state on $A$ is diffuse.
\end{prp}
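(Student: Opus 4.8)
The plan is to reduce the proposition to the auxiliary equivalence announced after \autoref{prp:diffuseImpliesNWS}: a \ca{} fails to be type~$\mathrm{I}$ if and only if it admits a pure state that is nowhere scattered. The two external ingredients I would lean on are that a pure state is never diffuse, and the structural characterization of type~$\mathrm{I}$ algebras: $A$ is type~$\mathrm{I}$ if and only if $\pi(A)\supseteq\Cpct(H_\pi)$ for every irreducible representation $\pi\colon A\to\Bdd(H_\pi)$; equivalently, $A$ fails to be type~$\mathrm{I}$ precisely when some irreducible representation $\pi$ satisfies $\pi(A)\cap\Cpct(H_\pi)=\{0\}$ (the passage between the two forms uses that $\pi(A)\cap\Cpct(H_\pi)\neq\{0\}$ already forces $\Cpct(H_\pi)\subseteq\pi(A)$ by irreducibility).

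First I would record that a pure state $\psi$ is never diffuse. Indeed $\pi_\psi$ is irreducible, so $\pi_\psi(A)''=\Bdd(H_\psi)$ is a type~$\mathrm{I}$ factor and hence atomic; by \autoref{prp:diffuseGNS} this makes $\psi$ atomic, and in particular not diffuse (alternatively, the support projection of $\psi$ is a minimal projection in $A^{**}$ on which $\psi$ does not vanish).

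Next I would establish the auxiliary equivalence. For one direction, if $A$ is not type~$\mathrm{I}$, pick an irreducible representation $\pi$ with $\pi(A)\cap\Cpct(H_\pi)=\{0\}$; since every irreducible representation is unitarily equivalent to the GNS representation of a pure state, I may write $\pi\cong\pi_\psi$ for a pure state $\psi$, and \autoref{prp:pureNWS} then shows that $\psi$ is nowhere scattered. Conversely, if $\psi$ is a pure, nowhere scattered state, then \autoref{prp:pureNWS} gives $\pi_\psi(A)\cap\Cpct(H_\psi)=\{0\}$; were $A$ type~$\mathrm{I}$, the structure theorem would force $\Cpct(H_\psi)\subseteq\pi_\psi(A)$ and hence a nonzero intersection, a contradiction, so $A$ is not type~$\mathrm{I}$.

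Finally I would deduce the proposition in both directions. If $A$ is type~$\mathrm{I}$ and some nowhere scattered state $\varphi$ were not diffuse, then by the discussion in \autoref{pgr:diffuse} it dominates a nonzero multiple $\lambda\psi$ of a pure state $\psi$; \autoref{prp:dominatedNWS} shows $\lambda\psi$ is nowhere scattered, and since nowhere scatteredness is invariant under positive rescaling, so is $\psi$, contradicting type~$\mathrm{I}$-ness via the auxiliary equivalence. Thus every nowhere scattered state is diffuse. For the converse, if every nowhere scattered state is diffuse yet $A$ were not type~$\mathrm{I}$, the auxiliary equivalence would produce a pure, nowhere scattered state, which would then have to be diffuse, contradicting the first step. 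I expect the main obstacle to be not the $C^*$-algebraic bookkeeping but the appeal to the deep structure theory of type~$\mathrm{I}$ algebras (Glimm's characterization through the presence of the compacts in every irreducible representation); once that is invoked correctly, the argument is a short chain through \autoref{prp:pureNWS}, \autoref{prp:dominatedNWS}, and \autoref{prp:diffuseGNS}.
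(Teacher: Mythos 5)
Your proposal is correct and follows essentially the same route as the paper: both directions rest on \autoref{prp:pureNWS}, \autoref{prp:dominatedNWS}, the fact from \autoref{pgr:diffuse} that a non-diffuse functional dominates a multiple of a pure state, and the characterization of type~$\mathrm{I}$ via the compacts in irreducible representations. Your packaging of the argument around the auxiliary equivalence (``$A$ is not type~$\mathrm{I}$ iff it admits a pure, nowhere scattered state'') is exactly the observation the paper states just before the proposition, so the only difference is organizational.
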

\begin{proof}
To prove the forward implication, assume that $A$ is of type~$\mathrm{I}$.
By \autoref{prp:diffuseImpliesNWS}, it remains to verify that every nowhere scattered functional is diffuse.
To prove the contraposition, let $\varphi$ be a positive functional on $A$ that is not diffuse.
Then there exists a pure state $\psi$ and $t>0$ such that $\psi\leq t\varphi$;
see \autoref{pgr:diffuse}.
Since $A$ is type~$\mathrm{I}$, we have $\pi_\psi(A)\cap\Cpct(H_\psi)\neq\{0\}$ and it follows from \autoref{prp:pureNWS} that $\psi$ is not nowhere scattered.
By \autoref{prp:dominatedNWS}, neither is $t\varphi$, which implies that~$\varphi$ is not nowhere scattered.

To show that backward implication, assume that $A$ is not of type~$\mathrm{I}$.
Then there exists a pure state $\varphi$ such that $\pi_\varphi$ is not GCR, that is, 
$\pi_\varphi(A)\cap\Cpct(H_\varphi)=\{0\}$.
By \autoref{prp:pureNWS}, $\varphi$ is nowhere scattered, yet not diffuse.
\end{proof}

\section{Haar unitaries characterize nowhere scattered functionals}

The main result of this section is \autoref{prp:charNWS}, which provides several characterizations for a functional to be nowhere scattered.
Most interestingly, a positive functional $A\to\CC$ is nowhere scattered if and only if the minimal unitization of every hereditary sub-\ca{} of $A$ contains a Haar unitary.
It follows in particular that every positive functional on a unital, simple, infinite-dimensional \ca{} admits a Haar unitary;
see \autoref{prp:HaarSimple}.

Every positive functional $\varphi\colon A\to\CC$ extends uniquely to a normal, positive functional $A^{**}\to\CC$, which we also denote by $\varphi$. 
The induced map $\OP(A)\to[0,\infty)$, $p\mapsto\varphi(p)$, is order-preserving and satisfies $\varphi(\sup_j p_j)=\sup_j \varphi(p_j)$ for every increasing net $(p_j)_j$ in $\OP(A)$.
Given $p\in\OP(A)$, it is easy to see that $p$ is the supremum of the set $\{ p'\in\OP(A) : p'\prec p \}$.
However, this set is not necessarily upward-directed.
Nevertheless, we have:

\begin{prp}
\label{prp:cutDown}
Let $\varphi\colon A\to\CC$ be a positive functional, let $p,q\in\OP(A)$ satisfy $p\prec q$, and let $\varepsilon>0$.
Then there exists $q'\in\OP(A)$ such that
\[
p\prec q'\prec q, \andSep
\varphi(q)-\varepsilon \leq \varphi(q').
\]
In particular, we have
\[
\varphi(q) = \sup \big\{ \varphi(q') : q'\in\OP(A), q'\prec q \big\}.
\]
\end{prp}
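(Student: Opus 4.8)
The plan is to approximate the value $\varphi(q)$ from below by a single open projection $q'$ that is compactly contained in $q$ yet still contains $p$. The difficulty is that forcing $q'\prec q$ requires a spectral ``cut-down'' of a positive element away from the top of its spectrum, and such a cut threatens both the lower bound on $\varphi(q')$ and the relation $p\prec q'$. To decouple $p$ from the cut, I would first interpolate: since $p\prec q$, there is $q_0\in\OP(A)$ with $p\prec q_0\prec q$ (the interpolation property of $\prec$ established above). It then suffices to find $q'$ with $q_0\le q'\prec q$, because $p\prec q_0\le q'$ forces $p\prec q'$ (by the property that $p\le q''\prec r\le s$ implies $p\prec s$). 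Thus $p$ disappears from the problem and only the condition $q_0\le q'$ must be preserved.

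Next, I would fix $b\in A_+$ with $q_0\le b\le q$; as $b$ is contractive, \autoref{prp:charCompPosProj} gives $b=qb=bq$, so $b$ commutes with $q$. Using that $\varphi$ is normal on $A^{**}$ and that $q=\sup_\lambda e_\lambda$ for an approximate unit $(e_\lambda)_\lambda$ of $B:=qA^{**}q\cap A$, I choose $e:=e_\lambda$ with $e\le q$ and $\varphi(e)\ge\varphi(q)-\tfrac{\varepsilon}{2}$. The key element is
\[
c:=b+(1-b)^{1/2}e(1-b)^{1/2}\in A_+,
\]
which adds the ``bulk'' $e$ only on the part orthogonal to $b$. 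I would verify: (i) $c\le q$, since $b$ commutes with $q$ and $e\le q$ give $(1-b)^{1/2}e(1-b)^{1/2}\le(1-b)^{1/2}q(1-b)^{1/2}=q-b$; and (ii) $q_0\le c\le 1$, so that \autoref{prp:charCompPosProj} yields $q_0=q_0c$, i.e.\ $q_0$ lies below the spectral projection $\chi_{\{1\}}(c)$.

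The crucial point is that $\supp(c)\ge\supp(e)$, so that $\varphi(\supp(c))\ge\varphi(e)\ge\varphi(q)-\tfrac{\varepsilon}{2}$. To see this, note that $c$ vanishes on the projection $q-\supp(c)$, hence so do both summands; from the vanishing of the first summand one gets $(1-b)^{1/2}(q-\supp(c))=q-\supp(c)$, and substituting this into the vanishing of the second summand yields $e(q-\supp(c))=0$, i.e.\ $\supp(e)\le\supp(c)$. Finally I would cut $c$ down: the open projections $\supp((c-\delta)_+)=\chi_{(\delta,\infty)}(c)$ increase to $\supp(c)$ as $\delta\searrow 0$, so by normality I can pick $\delta\in(0,1)$ with $\varphi(\supp((c-\delta)_+))\ge\varphi(\supp(c))-\tfrac{\varepsilon}{2}\ge\varphi(q)-\varepsilon$, and set $q':=\supp((c-\delta)_+)$. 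Since $\delta<1$ we have $q_0\le\chi_{\{1\}}(c)\le q'$, whence $p\prec q'$ (again because $p\prec q_0\le q'$); and $q'\prec q$ follows from $\supp((c-\delta)_+)\le g(c)\le q$ for a continuous $g$ with $g\equiv 1$ on $[\delta,\infty)$ and $g(0)=0$ (so $g(c)\in A_+$ and $g(c)\le q$ as $c\in B$). The ``in particular'' statement then follows by taking $p=0$ (note $0\prec q$) and combining with the trivial bound $\varphi(q')\le\varphi(q)$.

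I expect the main obstacle to be the construction in the second and third paragraphs: producing a single positive $c\le q$ that takes the value $1$ on $q_0$ (so that $q_0$ survives an arbitrarily small spectral cut) while $\supp(c)$ is large enough to carry almost all of $\varphi(q)$. The off-diagonal correction $(1-b)^{1/2}e(1-b)^{1/2}$ is essential here: a naive average $\tfrac12(b+e)$ would only give $c\ge\tfrac12 q_0$ in compression, which is too weak to force $q_0\le\chi_{(\delta,\infty)}(c)$, whereas the correction keeps $c\le q$ and exactly preserves the eigenvalue $1$ on $q_0$.
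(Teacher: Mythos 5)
Your argument is correct, but it follows a genuinely different route from the paper's. The paper fixes $a\in A_+$ with $p\leq a\leq q$, passes to the commutative algebra $C^*(a,q)\cong C(\spec(a))$, and uses a measure-theoretic pigeonhole step --- choosing an interval $(s,t)$ with $\mu_\varphi((s,t))<\varepsilon$ --- to split the desired $q'$ as an orthogonal sum $w+z$ of two open projections, where $w=\supp(f(a))$ sits above the cut (and absorbs $p$) and $z=\supp(q-h(a))$ sits below it; the final approximation of $z$ by $\supp((c_j-\delta)_+)$ matches your last step. You instead first discharge $p$ entirely via the interpolation $p\prec q_0\prec q$, and then glue $q_0$ to the bulk of $q$ with the single element $c=b+(1-b)^{1/2}e(1-b)^{1/2}$, where $e$ is an approximate-unit element of the hereditary subalgebra $qA^{**}q\cap A$ chosen so that $\varphi(e)\geq\varphi(q)-\varepsilon/2$. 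The two nontrivial verifications --- that $q_0\leq\chi_{\{1\}}(c)$ survives the spectral cut at any $\delta<1$, and that $\supp(e)\leq\supp(c)$ (via $rbr=0\Rightarrow(1-b)^{1/2}r=r\Rightarrow rer=0$ for $r=q-\supp(c)$) --- are both carried out correctly, and the functional calculus you apply to $c$ stays inside $A$ because $(1-b)^{1/2}e(1-b)^{1/2}=e+g(b)e+eg(b)+g(b)eg(b)$ with $g(b)\in A$. What your approach buys is the elimination of the spectral-gap argument and of the orthogonal decomposition $q'=w+z$: everything is packaged into one positive element whose $1$-eigenspace contains $q_0$ and whose support carries almost all of $\varphi(q)$; what the paper's approach buys is that it never needs the interpolation lemma for $\prec$ and works entirely inside a commutative subalgebra generated by the witness $a$. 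Both are complete proofs, including the ``in particular'' statement via $p=0$.
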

\begin{proof}
Choose $a\in A_+$ such that $p\leq a\leq q$.
Let $C^*(a,q)$ denote the sub-\ca{} of $A^{**}$ generated by $a$ and $q$, and let $\spec(a)$ denote the spectrum of $a$, which is a closed subset of $[0,1]$.
There is a natural isomorphism between $C^*(a,q)$ and $C(\spec(a))$.
The restriction of $\varphi$ to $C^*(a,q)$ induces a measure $\mu_\varphi$ on $\spec(a)$, which we view as a measure on $[0,1]$.
Choose $s,t$ such that
\[
0\leq s < t \leq 1, \andSep \mu_\varphi((s,t)) <\varepsilon.
\]
Then choose continuous functions $f,g,h\colon[0,1]\to[0,1]$ that take the value $0$ on $[0,s]$, that take the value $1$ on $[t,1]$, and that satisfy $f=fg$ and $g=gh$.
The elements $f(a)$, $g(a)$ and $h(a)$ belong to $A_+$ and satisfy 
\[
p\leq f(a), \quad
f(a) = f(a) g(a), \andSep
g(a) = g(a) h(a) \leq q.
\]

Set $z:=\supp(q-h(a))\in A^{**}$.
To see that $z$ is an open projection, let $(b_j)_j$ be an increasing net in $A_+$ with $q=\sup_j b_j$.
Then $((q-h(a))b_j(q-h(a))_j$ is an increasing net in $A_+$ with supremum $(q-h(a))^2$.
Then
\[
z 
= \supp\big( q-h(a) \big)
= \supp\big( (q-h(a))^2 \big)
= \sup_j \supp\big( (q-h(a))b_j(q-h(a)) \big),
\]
which shows that $z\in\OP(A)$.
We have $q-h(a) \leq z$ and therefore
\[
\mu_\varphi([t,1])\leq\varphi(q-h(a))\leq\varphi(z).
\]

Set $w:=\supp(f(a)) \in \OP(A)$.
We have $f(a)\leq w$ and therefore
\[
\mu_\varphi([0,s])\leq\varphi(f(a))\leq\varphi(w).
\]

Note that $f(a)=f(a)g(a)$ implies that $w=wg(a)$.
Similarly, it follows from $q-h(a)=(q-g(a))(q-h(a))$ that $z=(q-g(a))z$.
Hence,
\[
wz
= w(q-g(a))z
= 0.
\]
We have $\mu_\varphi([0,1])=\varphi(q)$ and therefore
\[
\varphi(w+z)
\geq \mu_\varphi([0,s]\cup[t,1])
= \mu_\varphi([0,1])-\mu_\varphi((s,t))
> \varphi(q) - \varepsilon.
\]

Let $(c_j)_j$ be an increasing net in $A_+$ with supremum $z$.
Using that $\varphi(z)=\sup_j\varphi(c_j)$, choose $j$ such that
\[
\varphi(q) - \varepsilon
< \varphi(w) + \varphi(c_j).
\]

For each $\delta>0$ let $(c_j-\delta)_+$ be the element obtained by applying functional calculus for the function $t\mapsto\max\{0,t-\delta\}$ to $z_j$.
Using that $c_j=\sup_{\delta>0}(c_j-\delta)_+$, choose $\delta>0$ such that
\[
\varphi(q) - \varepsilon
< \varphi(w) + \varphi\big( (c_j-\delta)_+ \big).
\]

Let $e\colon[0,1]\to[0,1]$ be a continuous function with $e(0)=0$ and taking the value~$1$ on $[\delta,1]$.
Then
\[
\supp\big( (z_j-\delta)_+ \big)
\leq e\big( (z_j-\delta)_+ \big)
\leq z,\quad
w\leq g(a), \andSep
zg(a)=0.
\]
Thus, $q':=\supp((z_j-\delta)_+) + w$ is an open projection satisfying
\[
q' \leq e\big( (z_j-\delta)_+ \big) + g(a) \leq q
\]
and thus $q'\prec q$.
Further, we have
\[
p\leq f(a) \leq w \leq q'
\]
and thus $p\prec q'$.
Lastly, we have
\[
\varphi(q) - \varepsilon
< \varphi(w) + \varphi\big( (c_j-\delta)_+ \big)
\leq \varphi(w) + \varphi\big( \supp((c_j-\delta)_+) \big)
= \varphi(q'),
\]
as desired.
\end{proof}

Given a positive functional $\varphi\colon A\to\CC$, it is well-known that the set
\[
L_\varphi := \big\{ a\in A : \varphi(a^*a)=0 \big\}
\]
is a closed, left ideal;
see \cite[Theorem~3.3.3]{Ped79CAlgsAutGp}.
Since $a\in A$ satisfies $\varphi(a^*a)=0$ if and only if $\varphi(\supp(a^*a))=0$, we have $L_\varphi := \big\{ a\in A : \varphi(\supp(a^*a))=0 \big\}$.

Analogously, $L_\varphi^*=\{ a\in A : \varphi(\supp(aa^*))=0 \}$ is a closed, right ideal, and
\[
L_\varphi\cap L_\varphi^* = \big\{ a\in A : \varphi(\supp(a^*a)) =\varphi(\supp(aa^*)) =0 \big\}
\]
is a hereditary sub-\ca{} of $A$.
The next result shows that $L_\varphi\cap L_\varphi^*$ is even a (closed, two-sided) ideal if $0$ is isolated in $\varphi(\OP(A))$.
It actually suffices that $\varphi(\OP(A))$ contains a sufficiently large gap near zero.

\begin{lma}
\label{prp:gapGivesIdeal}
Let $A$ be a \ca{}, let $\varphi\colon A\to\CC$ be a nonzero, positive functional, and let $\delta\in(0,\|\varphi\|)$ such that $(\delta,2\delta]\cap\varphi(\OP(A))=\emptyset$.
Then
\[
I := \big\{ a\in A : \varphi(\supp(aa^*)),\varphi(\supp(a^*a))\leq\delta \big\}
\]
is an ideal in $A$.
\end{lma}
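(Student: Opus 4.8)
The plan is to reduce all the ideal axioms to a single monotonicity property of $\varphi$ on open projections that is forced by the gap, and then to prove that property by an intermediate-value argument that genuinely exploits the gap. First observe that $I$ is self-adjoint, since the two defining conditions on $\supp(aa^*)$ and $\supp(a^*a)$ are interchanged by $a\mapsto a^*$, and that $I$ is invariant under nonzero scaling. Hence it suffices to show that $I$ is closed under addition, closed as a subset, and a \emph{right} ideal; closedness under left multiplication then follows by adjoints, since for $a\in I$, $b\in A$ we have $ba=(a^*b^*)^*$ with $a^*\in I$ and $a^*b^*\in I$. For the right ideal property, let $a\in I$ and $b\in A$. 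The left support of $ab$ satisfies $\supp(abb^*a^*)\leq\supp(aa^*)$ because $abb^*a^*\leq\|b\|^2aa^*$, so $\varphi(\supp(abb^*a^*))\leq\delta$ for free. For the right support, put $e:=\supp(a^*a)$; then $a^*a\leq\|a\|^2 e$ gives $\supp((ab)^*ab)\leq\supp(b^*eb)$, and $\supp(b^*eb)=\supp((eb)^*(eb))$ is Murray--von Neumann subequivalent to $e$. Thus the only nontrivial input is the following key claim.

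\emph{Key claim:} if $e\in\OP(A)$ with $\varphi(e)\leq\delta$ and $f\in\OP(A)$ is Murray--von Neumann subequivalent to $e$, then $\varphi(f)\leq\delta$. The mechanism driving this is the arithmetic of the gap: for any $p\in\OP(A)$ with $\varphi(p)\leq2\delta$ we automatically have $\varphi(p)\leq\delta$, since $\varphi(p)\notin(\delta,2\delta]$. To prove the claim, suppose toward a contradiction that $\varphi(f)>\delta$, hence $\varphi(f)>2\delta$. Writing $f=\sup_{\varepsilon>0}\supp((h-\varepsilon)_+)$ for a positive $h$ with $\supp(h)=f$ and using normality of $\varphi$, one locates an \emph{isolated} spectral value of $h$, and hence an honest projection $P\in A$ (so $P$ is open) with $P\leq f$ on which $\varphi$ jumps by more than $\delta$. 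The subequivalence $f\precsim e$ then supplies a partial isometry in $A$ carrying $P$ onto a projection $P'\leq e$, whence $\varphi(P')\leq\delta$ while $\varphi(P)>2\delta$. Interpolating $P$ and $P'$ by a norm-continuous path of projections in $A$ — a rotation built from the implementing partial isometry, passing to $M_2(A)$ if $P$ and $P'$ are not orthogonal — produces projections whose $\varphi$-values vary continuously from a value $>2\delta$ to one $\leq\delta$; by the intermediate value theorem some such open projection takes a value in $(\delta,2\delta]$, contradicting the gap.

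Granting the key claim, the remaining closures follow. Closedness under addition: for $a,b\in I$ the relevant open projection $\supp((a+b)^*(a+b))$ is dominated by $\supp(a^*a)\vee\supp(b^*b)$, and the parallelogram law $\supp(a^*a)\vee\supp(b^*b)=\supp(a^*a)+\big(\supp(a^*a)\vee\supp(b^*b)-\supp(a^*a)\big)$ with the second summand subequivalent to $\supp(b^*b)$ gives, via the key claim, that this join has $\varphi$-value $\leq2\delta$, hence $\leq\delta$ by the gap. Topological closedness: if $a_n\to a$ with $a_n\in I$, then for each $\varepsilon>0$ Cuntz comparison yields $(a^*a-\varepsilon)_+\precsim a_n^*a_n$ for large $n$, so $\supp((a^*a-\varepsilon)_+)\precsim\supp(a_n^*a_n)$ and the key claim gives $\varphi(\supp((a^*a-\varepsilon)_+))\leq\delta$; letting $\varepsilon\to0$ and using normality gives $\varphi(\supp(a^*a))\leq\delta$, and symmetrically for $aa^*$, so $a\in I$. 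Collecting these, $I$ is a closed, self-adjoint, two-sided ideal.

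The hard part will be the key claim. The naive estimate $\varphi(p\vee q)\leq\varphi(p)+\varphi(q)$ fails for non-tracial $\varphi$ (a vector state on $M_2$ already violates it), so the gap is truly indispensable, and it enters only through the implication ``$\varphi(p)\leq2\delta$ and $\varphi(p)\notin(\delta,2\delta]$ force $\varphi(p)\leq\delta$'' applied to \emph{open} projections. Consequently the delicate points are exactly the ones that keep the argument inside this regime: reducing an abstract subequivalence of open projections to one implemented by a partial isometry in $A$ between honest (hence open) projections — which is where the isolated-spectrum extraction and the $M_2(A)$-amplification are needed — and guaranteeing that the interpolating projections remain genuine projections of $A$, so that the gap is available to contradict the intermediate value produced along the path.
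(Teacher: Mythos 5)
Your overall architecture (self-adjointness plus right-ideal closure plus additivity plus topological closure, all funneled through the ``key claim'') could in principle work, but the proof you sketch for the key claim is where the argument genuinely breaks. First, the extraction of ``an honest projection $P\in A$ with $P\leq f$'' is unjustified: a general \ca{} --- already $C([0,1])$, or the hereditary sub-\ca{} $fA^{**}f\cap A$ --- may contain no nonzero projections at all, and normality of $\varphi$ together with the gap does not force $h$ to have an isolated spectral value; the monotone map $\varepsilon\mapsto\varphi(\supp((h-\varepsilon)_+))$ can jump across $(\delta,2\delta]$ without any point of $\spec(h)$ being isolated. Second, the partial isometry implementing a Murray--von Neumann subequivalence of open projections lives in $A^{**}$, not in $A$, so even if such a $P$ existed, $P'$ and the rotated projections along your path need not be (open) projections of $A$; and once you amplify to $M_2(A)$, the interpolating projections are open projections of $M_2(A)$, about which the hypothesis $(\delta,2\delta]\cap\varphi(\OP(A))=\emptyset$ says nothing, so the intermediate-value contradiction is unavailable exactly where you need it. A further problem: even granting the key claim for open projections, your additivity step applies it to $p\vee q-p$, which is in general not an open projection. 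In effect the key claim is not easier than the lemma itself --- it follows a posteriori from the centrality of the supremum of $\{p\in\OP(A):\varphi(p)\leq\delta\}$, i.e.\ from the conclusion you are trying to prove.

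For comparison, the paper never invokes subequivalence. It shows that $U=\{p\in\OP(A):\varphi(p)\leq\delta\}$ is upward directed, so that $z=\bigvee U$ again lies in $U$ by normality and $I$ is the hereditary sub-\ca{} corresponding to $z$; this yields closedness, self-adjointness and additivity in one stroke. Two-sidedness is then obtained by conjugating $\supp(aa^*)$ along a norm-continuous path of unitaries $u_t\in\mathcal{U}_0(\widetilde{A})$: the path $t\mapsto u_t\supp(aa^*)u_t^*$ stays inside $\OP(A)$, so its $\varphi$-values form a continuous function avoiding $(\delta,2\delta]$ and starting below $\delta$, hence staying below $\delta$. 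This is precisely the intermediate-value mechanism you are reaching for, implemented so that every projection along the path is an open projection of $A$ --- the point your version cannot secure. Your side remark that $\varphi(p\vee q)\leq\varphi(p)+\varphi(q)$ can fail for non-tracial $\varphi$ is correct, and it is worth keeping in mind when reading the directedness step of the paper's argument, which estimates $\varphi(p\vee q)$ by $\varphi(p+q)$.
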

\begin{proof}
Set
\[
U := \big\{ p\in\OP(A) : \varphi(p)\leq\delta \big\}, \andSep
z:=\bigvee U\in\OP(A).
\]
Given $p,q\in U$, we have 
\[
\varphi(p\vee q)\leq\varphi(p+q)\leq 2\delta.
\]
The assumption on the gap in $\varphi(\OP(A))$ implies that $p\vee q$ belongs to $U$.
Thus, $U$ is upward-directed and using that $\varphi\colon A^{**}\to\CC$ is a normal functional, we get
\[
\varphi(z)=\varphi(\bigvee U)=\sup_{p\in U}\varphi(p)\leq\delta.
\]
Thus, $z$ is the largest element in $U$.
It follows that $I$ is the hereditary sub-\ca{} of $A$ corresponding to $z$.

To show that $I$ is a two-sided ideal, let $\mathcal{U}_0(\widetilde{A})$ denote the subgroup of unitaries in the minimal unitization of $A$ that are connected to the unit.
Given $a\in I$ and $u\in\mathcal{U}_0(\widetilde{A})$, choose a continuous path $[0,1]\to\mathcal{U}_0(\widetilde{A})$, $t\mapsto u_t$, with $u_0=1$ and $u_1=u$.
For $t\in[0,1]$, we have
\[
\supp(u_taa^*u_t)=u_t\supp(aa^*)u_t.
\]
Thus, $t\mapsto\supp(u_taa^*u_t)$ is a continuous path of open projections in $A^{**}$, and so $t\mapsto\varphi(\supp(u_taa^*u_t))$ is a continuous map $[0,1]\to\RR$.
The gap in $\varphi(\OP(A))$ implies that
\[
\varphi(\supp(uaa^*u))\leq\delta.
\]
Since also $\varphi(\supp(a^*uua))=\varphi(\supp(a^*a))\leq\delta$, we deduce that $ua$ belongs to $I$.
Using that every element in $\widetilde{A}$ is a finite linear combination of elements in $\mathcal{U}_0(\widetilde{A})$, it follows that $I$ is a left ideal.
Analogously, we obtain that $I$ is a right ideal.
\end{proof}

\begin{lma}
\label{prp:gapGivesFDQuotient}
Let $A$ be a \ca{}, let $\varphi\colon A\to\CC$ be a nonzero, positive functional, and let $\delta\in(0,\|\varphi\|)$ such that $(\delta,2\delta]\cap\varphi(\OP(A))=\emptyset$.
Then there exists an ideal $I\subseteq A$ such that $A/I$ is finite-dimensional and $\|\varphi|_I\|<\|\varphi\|$. 
\end{lma}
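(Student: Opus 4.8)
The plan is to reuse the ideal produced in \autoref{prp:gapGivesIdeal}. Recall from its proof that $z:=\bigvee\{p\in\OP(A):\varphi(p)\leq\delta\}$ is the largest open projection with $\varphi(z)\leq\delta$, and that the ideal $I$ is the hereditary sub-\ca{} of $A$ corresponding to $z$. Since $I$ is a (closed, two-sided) ideal, the open projection $z$ is central, so $A/I$ has bidual $(1-z)A^{**}$. As $z$ is the central open projection corresponding to $I$, we have $\|\varphi|_I\|=\varphi(z)\leq\delta<\|\varphi\|$ by \autoref{rmk:NWS}, which settles the norm inequality. It remains to show that $A/I$ is finite-dimensional.

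Let $\overline\psi$ denote the positive functional induced on $A/I$ by $a\mapsto\varphi((1-z)a)$; this is well-defined and positive because $z$ is central, and it vanishes on $I$. The key step is the estimate that every \emph{nonzero} open projection of $A/I$ has $\overline\psi$-value strictly greater than $\delta$. Indeed, the open projections of $A/I$ are exactly the projections $q-z$ with $q\in\OP(A)$ and $q\geq z$, and then $\overline\psi(q-z)=\varphi(q)-\varphi(z)$. If $q-z\neq 0$, then $q>z$, so $q$ does not lie in the set $\{p\in\OP(A):\varphi(p)\leq\delta\}$ (of which $z$ is the largest element) and hence $\varphi(q)>\delta$; the gap hypothesis $(\delta,2\delta]\cap\varphi(\OP(A))=\emptyset$ then forces $\varphi(q)>2\delta$, whence $\overline\psi(q-z)>2\delta-\delta=\delta$, using $\varphi(z)\leq\delta$.

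To conclude, suppose for contradiction that $A/I$ is infinite-dimensional. I would then invoke the standard fact that every infinite-dimensional \ca{} contains an infinite sequence $(b_n)_n$ of nonzero, pairwise orthogonal positive elements; this follows by distinguishing the case in which some self-adjoint element has infinite spectrum (handled by applying bump functions supported on pairwise disjoint spectral intervals) from the complementary case, in which a short argument shows every commutative sub-\ca{} is finite-dimensional and the algebra itself is finite-dimensional. The support projections $\overline q_n:=\supp(b_n)$ are then pairwise orthogonal, nonzero open projections of $A/I$, so $\sum_{n=1}^N\overline\psi(\overline q_n)\leq\overline\psi(1)=\varphi(1-z)\leq\|\varphi\|$ for every $N$; but each summand exceeds $\delta$ by the previous paragraph, giving $N\delta<\|\varphi\|$ for all $N$, a contradiction. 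Hence $A/I$ is finite-dimensional. The main obstacle is the bookkeeping for open projections in the quotient together with the structural fact on infinite orthogonal families; everything else is a direct consequence of the gap and \autoref{prp:gapGivesIdeal}.
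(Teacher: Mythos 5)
Your proof is correct and follows essentially the same route as the paper: it reuses the ideal $I$ and the maximal open projection $z$ from \autoref{prp:gapGivesIdeal}, passes to the induced functional on $A/I$, and uses the gap to bound the value of every nonzero open projection of the quotient from below, concluding finite-dimensionality from the standard fact about infinite spectra. The only cosmetic difference is that you package the paper's two claims (faithfulness of $\psi$ and the gap $(0,2\delta-\delta_0]$ in $\psi(\OP(A/I))$) into the single estimate $\overline\psi(p)>\delta$, and phrase the final step via infinite orthogonal families rather than via positive elements with infinite spectrum.
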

\begin{proof}
Set
\[
I := \big\{ a\in A : \varphi(\supp(aa^*)),\varphi(\supp(a^*a))\leq\delta \big\},
\]
which is an ideal in $A$ by \autoref{prp:gapGivesIdeal}.
Let $z\in A^{**}$ denote the corresponding central, open projection.
By assumption, we have $\delta<\|\varphi\|$ and therefore
\[
\|\varphi|_I\|
= \varphi(z)
\leq \delta 
< \|\varphi\|.
\]

Set $\delta_0:=\varphi(z)$.
Then $(\delta_0,2\delta]\cap\varphi(\OP(A))=\emptyset$.
Set $B:=A/I$, and let $\pi\colon A\to B$ denote the quotient map.
Using the natural identification of $B^{**}$ with $(1-z)A^{**}$, we define $\psi\colon B\to\CC$ as the composition
\[
B\subseteq B^{**} \cong (1-z)A^{**}\subseteq A^{**}\xrightarrow{\varphi}\CC.
\]

\emph{Claim: The functional $\psi\colon B\to\CC$ is faithful.}
To prove the claim, let $b\in B_+$ satisfy $\psi(b)=0$.
Lift $b$ to find $a\in A_+$ with $\pi(a)=b$.
Let $p\in\OP(A)$ be the support projection of $a$.
The natural isomorphism $(1-z)A^{**}\cong B^{**}$ identifies $(1-z)p$ with the support projection of $b$, and we get
\[
\varphi\big( (1-z)p \big)
= \psi\big( \supp(b) \big)
=0.
\]
Since $p\leq z+(1-z)p$, we obtain that
\[
\varphi(p)
\leq \varphi(z) + \varphi\big( (1-z)p \big)
= \delta_0 
\leq \delta.
\]
We get $a\in I$ and so $b=0$,
which proves the claim.

\emph{Claim: We have $(0,2\delta-\delta_0]\cap\psi(\OP(B))=\emptyset$.}
To prove the claim, assume that $p\in\OP(B)$ satisfies $\psi(p)\in(0,2\delta-\delta_0]$.
We identify $B^{**}$ with $(1-z)A^{**}\subseteq A^{**}$.
Set $\bar{p}:=z+p$.
Then $\bar{p}$ is the open projection in $A^{**}$ corresponding to the hereditary sub-\ca{} $\pi^{-1}(pB^{**}p\cap B)\subseteq A$.
We have
\[
\varphi(\bar{p})
= \varphi(z+p)
= \delta_0 + \psi(p)
\in (\delta_0,2\delta].
\]
This contradicts that $(\delta_0,2\delta]\cap\varphi(\OP(A))=\emptyset$.
The claim is proved.

It follows from the above claims that $\psi\colon B\to\CC$ is faithful and that $0$ is isolated in $\psi(\OP(B))$.
Let us show that this implies that every positive element in~$B$ has finite spectrum.
To reach a contradiction, let $b \in B_+$ have infinite spectrum~$\spec(b)$.
This allows us to choose a countable family $(f_k)_{k\in\NN}$ of nonzero, continuous functions $f_k \colon \spec(b) \to [0,1]$ such that $f_kf_l=0$ if $k \neq l$.
Then the open projections $\supp(f_k(b)) \in \OP(B)$ are pairwise orthogonal and satisfy $\supp(f_k(b)) \leq \supp(b)$, which implies that
\[
\sum_{k\in\NN} \psi(\supp(f_k(b))) \leq \psi(\supp(b)).
\]

On the other hand, since $0\neq f_k(b) \leq \supp(f_k(b))$, and since $\psi$ is faithful, we have $0 < \psi(\supp(f_k(b))$ for each $k$.
This implies that $\{\psi(\supp(f_k(b)):k\in\NN\}$ contains arbitrarily small positive elements, contradicting that $0$ is isolated in~$\psi(\OP(B))$.

Using that a \ca{} is finite-dimensional if (and only if) every of its positive elements has finite spectrum, we deduce that $B$ is finite-dimensional.
\end{proof}

Given $p\in\OP(A)$, we use $[0,p]$ to denote the set $\{q\in\OP(A) : q\leq p\}$.

\begin{lma}
\label{prp:NWSImpliesDenseOP}
Let $A$ be a \ca{}, and let $\varphi\colon A\to\CC$ be a nowhere scattered, positive functional.
Then $\varphi([0,p])$ is dense in $[0,\varphi(p)]$ for every $p\in\OP(A)$.
\end{lma}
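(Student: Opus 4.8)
The plan is to prove the contrapositive and feed a gap in the value set into \autoref{prp:gapGivesFDQuotient}. First I would reduce to the case where $p$ is the largest open projection: letting $B := pA^{**}p \cap A$ be the hereditary sub-\ca{} determined by $p$ and $\psi := \varphi|_B$, one has that the open projections of $B$ are exactly the elements of $[0,p]$, that $\|\psi\| = \varphi(p)$, and that $\psi$ is again nowhere scattered by \autoref{prp:NWS-RestrictHer}. Thus it suffices to show that $\psi(\OP(B))$ is dense in $[0,\|\psi\|]$. Assuming it is not, there are $0 \le c < d \le \|\psi\|$ with $(c,d) \cap \psi(\OP(B)) = \emptyset$; shrinking the gap, I may take $c$ to be the infimum of the complement of the closure of $\psi(\OP(B))$, so that $\psi(\OP(B))$ is dense in $[0,c]$.

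The engine of the argument is to produce a hereditary sub-\ca{} $C \subseteq B$ together with a \emph{doubling gap}, i.e.\ some $\delta \in (0, \|\psi|_C\|)$ with $(\delta, 2\delta] \cap (\psi|_C)(\OP(C)) = \emptyset$. Granting this, \autoref{prp:gapGivesFDQuotient} applied to the nonzero functional $\psi|_C$ yields an ideal $I \subseteq C$ with $C/I$ finite-dimensional and $\|\psi|_I\| < \|\psi|_C\|$. Since a finite-dimensional \ca{} is scattered, taking the ideals $I \subseteq J := C$ this contradicts that $\psi|_C$ is nowhere scattered (\autoref{dfn:NWS}, together with \autoref{prp:NWS-RestrictHer}). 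So everything comes down to manufacturing the doubling gap from $(c,d)$.

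If $c = 0$, then $(0,d)$ already contains $(\delta, 2\delta]$ for every $\delta < d/2$, and I take $C := B$. If $c > 0$, the gap need not have the shape $(\delta, 2\delta]$ — a \emph{narrow} gap with $d \le 2c$ contains no such interval — so I would first translate it to the origin. Using that suprema of increasing nets of open projections are open (as recalled in \autoref{sec:OP}) and that $\psi$ preserves such suprema, a Zorn argument produces an open projection $z$ that is maximal among those with $\psi(z) \le c$; by maximality every open $q \supsetneq z$ then satisfies $\psi(q) \ge d$. Passing to the complementary corner $C := (1-z)B^{**}(1-z) \cap B$, with open projection $w = 1 - \overline{z}$ (the unit here being that of $B^{**}$), every nonzero $p \in \OP(C)$ is orthogonal to $z$, so $z + p$ is open with $z + p \supsetneq z$ and $\psi(z+p) = \psi(z) + \psi(p) \ge d$. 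Hence $(\psi|_C)(\OP(C))$ avoids the interval $(0, d - \psi(z))$, a gap at the origin of width at least $d - c$, which provides a doubling gap as soon as $\psi(w) > 0$.

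The hard part is precisely this last proviso $\psi(w) > 0$, equivalently that the maximal $z$ is not dense ($\overline{z} \neq 1$). This is exactly the phenomenon with no abelian analogue: for a diffuse measure the value set has no gaps at all, so the difficulty is invisible there. When $z$ \emph{is} dense there is no complementary corner to restrict to; yet the property $\psi(q) \ge d$ for all open $q \supsetneq z$ says that the mass sitting just outside $z$ cannot be broken into arbitrarily small open increments. I expect this indivisibility to force a minimal projection in a suitable quotient of $B$, once more contradicting nowhere scatteredness through \autoref{prp:firstCharNWS}. Isolating that quotient and its minimal projection — or, alternatively, using \autoref{prp:cutDown} to arrange from the outset a choice of $z$ with $\overline{z} \neq 1$ — is the technical heart of the proof.
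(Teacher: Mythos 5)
Your overall strategy is the same as the paper's: argue by contradiction, move the gap in the value set down to the origin by splitting off an open projection that carries the mass below the gap, manufacture a doubling gap $(\delta,2\delta]$, and then contradict nowhere scatteredness via \autoref{prp:gapGivesFDQuotient} and \autoref{prp:NWS-RestrictHer}. But the proposal has a genuine, and indeed self-acknowledged, gap at the decisive step: after producing a Zorn-maximal open projection $z$ with $\psi(z)\leq c$, you need the complementary corner $C$ determined by $w=1-\overline{z}$ to carry positive mass, i.e.\ $\psi(w)>0$. Nothing in the construction rules out $\overline{z}=1$ (a maximal such $z$ can perfectly well be dense), in which case $C=\{0\}$, there is no admissible $\delta\in(0,\|\psi|_C\|)$, and \autoref{prp:gapGivesFDQuotient} cannot be invoked. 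Your proposed fallback --- that density of $z$ should force a minimal projection in some quotient --- is not carried out, and it is not a routine step; as you say yourself, it is the technical heart, so the proof is incomplete precisely there.

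The paper closes exactly this hole by \emph{not} taking a maximal $z$. Instead, with $t$ the supremum of the values $\leq t$ below the gap $(t,t+2\delta)$, it picks $p'\leq p$ with $t-\delta<\varphi(p')\leq t$ and then applies \autoref{prp:cutDown} to get $p''\prec p'$ still satisfying $t-\delta<\varphi(p'')$. Compact containment gives $\overline{p''}\leq p'$, so $q:=p-\overline{p''}$ is open with $\varphi(q)=\varphi(p)-\varphi(\overline{p''})\geq\varphi(p)-\varphi(p')\geq 2\delta>0$: the complement is guaranteed to have mass because one only removes the closure of something whose value is already within $\delta$ of the supremum. The price is that the gap transported to $[0,q]$ is only $(\delta,2\delta]$ rather than your full interval $(0,d-\psi(z))$, but that is exactly the doubling gap that \autoref{prp:gapGivesFDQuotient} requires. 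So your parenthetical suggestion of ``using \autoref{prp:cutDown} to arrange $\overline{z}\neq 1$ from the outset'' is the right repair; to complete the argument you should abandon the Zorn-maximal $z$ altogether, work with an $\varepsilon$-almost-maximal projection, shrink it compactly, and accept the weaker $(\delta,2\delta]$-gap in the complement.
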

\begin{proof}
Let $p\in\OP(A)$.
To reach a contradiction, assume that $\varphi([0,p])$ is not dense in $[0,\varphi(p)]$. 
Choose $t\in[0,\varphi(p))$ and $\delta>0$ satisfying $(t,t+2\delta)\cap\varphi([0,p])=\emptyset$.
We may assume that
\[
t = \sup \big\{ \varphi(p') : p'\in[0,p], \varphi(p')\leq t \big\},
\]
which allows us to choose $p'\in\OP(A)$ satisfying $p'\leq p$ and $t-\delta<\varphi(p')$.
Apply \autoref{prp:cutDown} to obtain $p''\in\OP(A)$ satisfying
\[
p''\prec p', \andSep t-\delta<\varphi(p''). 
\]
Then $\overline{p''}\leq p'\leq p$, and we set $q:=p-\overline{p''}$, which is an open projection.
We have $\varphi(p)\geq t+2\delta$ and $\varphi(p'')\leq t$, and therefore
\[
\varphi(q)
= \varphi(p)-\varphi(\overline{p''})
\geq \varphi(p)-\varphi(p'')
\geq t+2\delta-t = 2\delta.
\]

\emph{Claim:
We have $(\delta,2\delta]\cap\varphi([0,q])=\emptyset$.}
To prove the claim, assume that an open projection $q'\leq q$ satisfies $\varphi(q')\in(\delta,2\delta]$.
Since $p''$ and $q$ are orthogonal and satisfy $p''+q\leq p$, we get that $p''+q'\in[0,p]$.
On the other hand,
\[
\varphi(p''+q')
= \varphi(p'') + \varphi(q') \in (t-\delta,t] + (\delta,2\delta] \subseteq (t,t+2\delta],
\]
which contradicts that $(t,t+2\delta]\cap\varphi([0,p])=\emptyset$.
This proves the claim.

Set $B:=qA^{**}q\cap A$, the hereditary sub-\ca{} of $A$ corresponding to $q$.
By construction, the restriction of $\varphi$ to $B$ satisfies the assumptions of \autoref{prp:gapGivesFDQuotient}.
Hence, $\varphi|_{B}$ is not nowhere scattered, which contradicts \autoref{prp:NWS-RestrictHer}.
\end{proof}

\begin{lma}
\label{prp:interpolatingPairs}
Let $A$ be a \ca{}, let $\varphi\colon A\to\CC$ be a positive functional.
Assume that $\varphi([0,z])$ is dense in $[0,\varphi(z)]$ for every $z\in\OP(A)$.
Let $p,\tilde{p},q\in\OP(A)$ and $t,\tilde{t}\in[0,1]$ satisfy
\[
p\prec\tilde{p}\prec q, \andSep 
\varphi(\tilde{p})\leq t<\tilde{t}\leq\varphi(q).
\]
Then there exist $r,\tilde{r}\in\OP(A)$ satisfying
\[
p \prec r \prec \tilde{r} \prec q, \andSep
t \leq \varphi(r) \leq \varphi(\tilde{r}) \leq \tilde{t}.
\]
\end{lma}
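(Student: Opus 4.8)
The plan is to reduce the statement to producing a \emph{single} open projection, and then to build that projection from the spectral calculus of a carefully chosen positive element, feeding the density hypothesis exactly once.

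\emph{Reduction.} I claim it suffices to find a single $w\in\OP(A)$ with $\overline{p}\le w\prec q$ and $t<\varphi(w)\le\tilde{t}$. Indeed, since $w\prec q$ and $\overline{p}\le w$, the characterization of $\prec$ recorded after \autoref{dfn:prec} (namely $p\prec w$ iff $\overline{p}\le w$ and $\overline{p}\le c$ for some $c\in A_+$) gives $p\prec w$. Applying \autoref{prp:cutDown} to the pair $p\prec w$ produces $r\in\OP(A)$ with $p\prec r\prec w$ and $\varphi(r)\ge\varphi(w)-\varepsilon$; choosing $\varepsilon<\varphi(w)-t$ and setting $\tilde{r}:=w$ yields $p\prec r\prec\tilde{r}\prec q$ with $t\le\varphi(r)\le\varphi(\tilde{r})\le\tilde{t}$, as required.

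\emph{The key construction, and the main obstacle.} The difficulty is that the density hypothesis controls \emph{sub}-projections of a given open projection, whereas I must control the value of an open projection that \emph{dominates} $\overline{p}$ and sits compactly below $q$. A naive open projection growing $\overline{p}$ toward $q$ may carry too much $\varphi$-mass ``at the top'', i.e.\ on the eigenspace at $1$ of a witnessing positive element, and so overshoot the window. To defeat this I build $a\in A_+$ with $\overline{p}\le a$, with $\supp(a)\prec q$ and $\varphi(\supp(a))>t$, but whose top spectral projection $\chi_{\{1\}}(a)=\bigwedge_{s<1}\supp((a-s)_+)$ satisfies $\varphi(\chi_{\{1\}}(a))\le t$. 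Using $p\prec\tilde{p}$ one first checks $\overline{p}\prec\tilde{p}$ (same characterization of $\prec$), so there is $b\in A_+$ with $\overline{p}\le b\le\tilde{p}$; as $b\le\tilde{p}$, the \emph{whole} support of $b$, in particular $\chi_{\{1\}}(b)$, has $\varphi$-mass at most $\varphi(\tilde{p})\le t$. Separately, \autoref{prp:cutDown} (applied to $\tilde{p}\prec q$, then again inside the resulting $q_0$) gives $d\in A_+$ with $\overline{p}\le d\le q_0\prec q$ and $\varphi(\supp(d))>t$. Set $a:=\tfrac12(b+d)$. Then $\overline{p}\le a\le q_0$, so $\supp(a)\prec q$, and $\varphi(\supp(a))\ge\varphi(\supp(d))>t$; moreover an average of two contractions equals $1$ exactly where both summands do, so $\chi_{\{1\}}(a)\le\chi_{\{1\}}(b)$ and hence $\varphi(\chi_{\{1\}}(a))\le t$. (By \autoref{prp:charCompPosProj}, $\overline{p}\le a$ also gives $\overline{p}\le\chi_{(s,1]}(a)$ for every $s<1$.)

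\emph{Cutting at a non-atomic level and invoking density.} Let $\mu$ be the measure on $[0,1]$ induced by $\varphi$ on $\spec(a)$. As $s\uparrow1$ the quantity $\varphi(\chi_{(s,1]}(a))=\mu((s,1])$ decreases to $\mu(\{1\})=\varphi(\chi_{\{1\}}(a))\le t<\tilde{t}$, and since $\mu$ has only countably many atoms I may fix a non-atomic $s_1\in(0,1)$ with $\mu((s_1,1])<\tilde{t}$. Put $u:=\supp((a-s_1)_+)=\chi_{(s_1,1]}(a)$ and $z:=\supp(a(s_1-a)_+)=\chi_{(0,s_1)}(a)$; these are orthogonal open projections below $\supp(a)\prec q$, with $\overline{p}\le u$, and because $s_1$ is not an atom they split the full mass: $\varphi(u)+\varphi(z)=\mu((s_1,1])+\mu((0,s_1))=\varphi(\supp(a))$. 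If $\varphi(u)>t$ then $\varphi(u)\in(t,\tilde{t})$ and $w:=u$ already works. Otherwise $\varphi(u)\le t$, and the density hypothesis applied to $z$ furnishes an open $z'\le z$ with $\varphi(z')$ in the nonempty interval $(t-\varphi(u),\ \min(\tilde{t},\varphi(\supp(a)))-\varphi(u)]\subseteq[0,\varphi(z)]$; then $w:=u+z'$ is open, satisfies $\overline{p}\le u\le w\le\supp(a)\prec q$, and $\varphi(w)=\varphi(u)+\varphi(z')\in(t,\tilde{t}]$. In either case $w$ meets the requirements of the reduction, which completes the proof. The single genuinely delicate point is the construction of $a$ with small top eigenmass; the orthogonal splitting at a non-atomic level and the single use of density are then routine.
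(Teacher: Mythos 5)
Your proof is correct, and it is a genuinely different route to the same interpolating projection. The paper's proof first shrinks $\tilde{p}$ to some $e$ with $p\prec e\prec\tilde{p}$ and $\varphi(e)\geq\varphi(\tilde{p})-\varepsilon$ (via \autoref{prp:cutDown}), takes the orthogonal remainder $z:=q-\overline{e}$, applies the density hypothesis once to $z$ to find $z'$ with $\varphi(z')$ just below $\tilde{t}-\varphi(\tilde{p})$, and sets $f:=\tilde{p}\vee z'$; the two-sided estimate $\varphi(e)+\varphi(z')\leq\varphi(f)\leq\varphi(\tilde{p})+\varphi(z')$ places $\varphi(f)$ in $(t,\tilde{t}]$, and two more applications of \autoref{prp:cutDown} finish. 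You instead manufacture a single positive element $a=\tfrac12(b+d)$ whose support sits compactly below $q$ with mass exceeding $t$ while its top eigenprojection has mass at most $t$ (the averaging trick), split $\supp(a)$ at a non-atomic spectral level into $u\perp z$ with $\overline{p}\leq u$ and $\varphi(u)<\tilde{t}$, and use density on the bottom piece $z$ to top up the mass into the window $(t,\tilde{t}]$. Both arguments invoke the density hypothesis exactly once on an orthogonal remainder and close with \autoref{prp:cutDown}; the paper's complement $q-\overline{e}$ avoids your spectral-measure analysis, the choice of a non-atomic cut level, and the case distinction, so it is shorter, whereas your version produces a $w$ dominating $\overline{p}$ directly, which makes the final step $p\prec w$ immediate. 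One cosmetic point: $\overline{p}$ is closed rather than open, so ``$\overline{p}\prec\tilde{p}$'' is an abuse of the relation of \autoref{dfn:prec}; the element $b\in A_+$ with $\overline{p}\leq b\leq\tilde{p}$ that you actually need is obtained by interpolating $p\prec q'\prec\tilde{p}$ and taking a witness of $q'\prec\tilde{p}$, so this does not affect the argument.
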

\begin{proof}
Choose $\varepsilon>0$ such that
\[
t<\tilde{t}-2\varepsilon.
\]
Apply \autoref{prp:cutDown} to obtain $e\in\OP(A)$ such that
\[
p\prec e \prec \tilde{p}, \andSep
\varphi(\tilde{p}) -\varepsilon \leq \varphi(e).
\]
Then $e\prec q$, which allows us to set $z:=q-\overline{e}\in\OP(A)$.
We have $q\leq \tilde{p}+z$ and therefore
\[
0
< \tilde{t}-\varphi(\tilde{p})
\leq \varphi(q) -\varphi(\tilde{p})
\leq \varphi(z).
\]
By assumption, we obtain $z'\in\OP(A)$ such that
\[
z'\leq z, \andSep
\varphi(z') \in (\tilde{t}-\varphi(\tilde{p})-\varepsilon, \tilde{t}-\varphi(\tilde{p})].
\]
Set $f := \tilde{p}\vee z' \in \OP(A)$.
Then
\[
\varphi(f)
= \varphi(\tilde{p}\vee z')
\leq \varphi(\tilde{p}) + \varphi(z')
\leq \varphi(\tilde{p}) + \tilde{t} - \varphi(\tilde{p}) = \tilde{t}.
\]
On the other hand, we have $e+z'\leq f$ and therefore
\[
\varphi(f)
\geq \varphi(e)+\varphi(z')
\geq \varphi(\tilde{p}) -\varepsilon + \tilde{t}-\varphi(\tilde{p})-\varepsilon
= \tilde{t}-2\varepsilon
> t.
\]
We also have $p\prec \tilde{p}\leq f$.
Applying \autoref{prp:cutDown} twice, we find $\tilde{r}$ and then $r$ such that
\[
p \prec r \prec \tilde{r} \prec f, \andSep
t<\varphi(r).
\]
Then $r$ and $\tilde{r}$ have the desired properties.
\end{proof}

\begin{lma}
\label{prp:constructionPath}
Let $A$ be a \ca{}, let $\varphi\colon A\to\CC$ be a positive functional.
Assume that $\varphi([0,p])$ is dense in $[0,\varphi(p)]$ for every $p\in\OP(A)$.
Then there exist a path $f\colon(-\infty,0]\to\OP(A)$ (in the sense of \autoref{dfn:path}) satisfying $f(-\|\varphi\|)=0$ and
\[
\varphi(f(t))=\|\varphi\|+t
\]
for $t\in[-\|\varphi\|,0]$.
By \autoref{prp:elementFromPath}, $f$ corresponds to a positive element $a\in A$ with spectrum $[0,\|\varphi\|]$ on which $\varphi$ induces the Lebesgue measure.
\end{lma}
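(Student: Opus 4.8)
The plan is to put $M:=\|\varphi\|$ and, assuming $M>0$ (the case $M=0$ is trivial, with $f\equiv 0$), to construct open projections $P_d\in\OP(A)$ indexed by the dyadic rationals $d$ with $-M<d<0$ such that $P_d\prec P_{d'}$ whenever $d<d'$ and such that $\varphi(P_d)$ approximates the target $M+d$ with an error tending to $0$ as the dyadic level of $d$ grows. Once such a $\prec$-chain is at hand, I define $f(t):=0$ for $t\le -M$ and $f(t):=\sup\{P_d : d<t\}$ for $t\in(-M,0]$, verify that $f$ is a bounded path, and recover the element from $f$ via \autoref{prp:elementFromPath}. The chain is built by dyadic bisection using \autoref{prp:interpolatingPairs}, whose standing hypothesis is precisely the density assumption of the present lemma. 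The scaffolding endpoints are $P_{-M}:=0$ and $P_0:=1_{A^{**}}\in\OP(A)$, which carry the exact endpoint values $0$ and $\varphi(1_{A^{**}})=M$.

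I would maintain at each finite stage not only the chain $P_{d_0}\prec\cdots\prec P_{d_N}$ but also, for each consecutive gap $(d_i,d_{i+1})$, an auxiliary open projection $m_i$ with $P_{d_i}\prec m_i\prec P_{d_{i+1}}$ whose value $\varphi(m_i)$ lies below the midpoint of the gap (in fact close to the lower end $M+d_i$). This auxiliary is exactly what upgrades the pair $P_{d_i}\prec P_{d_{i+1}}$ to a triple $P_{d_i}\prec m_i\prec P_{d_{i+1}}$ to which \autoref{prp:interpolatingPairs} applies. To insert the midpoint $d^*$, I apply \autoref{prp:interpolatingPairs} to this triple with $t<\tilde t$ bracketing $M+d^*$ tightly; this is legitimate because $\varphi(m_i)$ is comfortably below $M+d^*$, while $M+d^*<M+d_{i+1}\approx\varphi(P_{d_{i+1}})$. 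The output $r\prec\tilde r$ satisfies $P_{d_i}\prec r\prec\tilde r\prec P_{d_{i+1}}$ with $\varphi(r),\varphi(\tilde r)$ near $M+d^*$, and I set $P_{d^*}:=r$; the new chain relations $P_{d_i}\prec P_{d^*}\prec P_{d_{i+1}}$ follow from the transitivity of $\prec$ established after \autoref{dfn:prec}.

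The delicate point, which I expect to be the main obstacle, is to supply the auxiliaries for the two \emph{new} gaps. For the right gap $(d^*,d_{i+1})$ the second output $\tilde r$ serves directly, since $P_{d^*}=r\prec\tilde r\prec P_{d_{i+1}}$ and $\varphi(\tilde r)$ sits near the lower end $M+d^*$ of that gap. For the left gap $(d_i,d^*)$, however, the old auxiliary $m_i$ need not satisfy $m_i\prec P_{d^*}$, because \autoref{prp:interpolatingPairs} only guarantees that its output lies above the \emph{lower} endpoint $P_{d_i}$ of the input triple, not above the middle. Here I would regenerate a low-value auxiliary between $P_{d_i}$ and $r$ directly from the density hypothesis: form $w:=r-\overline{P_{d_i}}\in\OP(A)$, choose $w'\le w$ in $\OP(A)$ with $\varphi(w')$ small, and build the new auxiliary from $P_{d_i}$ and $w'$ as in the proof of \autoref{prp:interpolatingPairs}. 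Requiring the value error to be at most $2^{-n}$ for a dyadic of level $n$ keeps every hypothesis satisfied at each stage and forces the errors to vanish in the limit.

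Finally, passing to $f$, conditions (1) and (2) of \autoref{dfn:path} are routine: (1) holds because the suprema of the $P_d$ telescope, and (2) follows by interposing grid points $t'<d<d'<t$ and using $f(t')\le P_d\prec P_{d'}\le f(t)$. The exact identity $\varphi(f(t))=M+t$ comes out by a squeeze: for dyadic $d'>d$ the chain gives $\varphi(P_d)\le\varphi(P_{d'})\le M+d'+2^{-\mathrm{lev}(d')}$, and letting $d'\downarrow d$ through high-level dyadics yields $\varphi(P_d)\le M+d$; the reverse inequality is symmetric, so $\varphi(P_d)=M+d$ at every dyadic, whence $\varphi(f(t))=\sup_{d<t}(M+d)=M+t$. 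Thus $f$ is a bounded path of length $M$, and \autoref{prp:elementFromPath} yields $a\in A_+$ with $f=f_a$ and $\|a\|=M$. Since $f_a(-s)=\supp((a-s)_+)$, the measure $\mu$ induced by $\varphi$ on $\spec(a)$ satisfies $\mu((s,M])=\varphi(f(-s))=M-s$ for $s\in[0,M]$; this is Lebesgue measure, and its full support forces $\spec(a)=[0,M]$, completing the proof.
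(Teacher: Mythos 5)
Your strategy coincides with the paper's: build a dyadically indexed $\prec$-chain of open projections with prescribed $\varphi$-values by iterating \autoref{prp:interpolatingPairs}, pass to the path $f$, and invoke \autoref{prp:elementFromPath}. You also isolate exactly the delicate point, namely that the output of \autoref{prp:interpolatingPairs} is compactly above only the \emph{bottom} element of the input triple, so after setting $P_{d^*}:=r$ the left gap $(d_i,d^*)$ has no admissible auxiliary. But the repair you offer does not work. The element manufactured from $P_{d_i}$ and $w'\le r-\overline{P_{d_i}}$ in the manner of that proof is $m:=P_{d_i}\vee w'$; it has the right $\varphi$-value and satisfies $P_{d_i}\le m\le r$, but neither $P_{d_i}\prec m$ nor $m\prec r$ holds, and both compact containments are indispensable, since $P_{d_i}\prec m\prec P_{d^*}$ is precisely the hypothesis needed to run \autoref{prp:interpolatingPairs} again inside that gap. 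The density hypothesis controls values but never produces a relation $\prec$; part~(2) of the lemma following \autoref{prp:charCompPosProj} gives some $e$ with $P_{d_i}\prec e\prec r$ but with uncontrolled $\varphi(e)$; \autoref{prp:cutDown} gives elements whose value is close to $\varphi(r)$ (large, not small); and \autoref{prp:interpolatingPairs} itself would require a low-value middle element between $P_{d_i}$ and $r$, which is exactly what is missing. Once $r$ has been fixed, none of the available tools produces a low-value open projection compactly squeezed between $P_{d_i}$ and $r$.

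The cure is to reorder the construction rather than to regenerate the auxiliary a posteriori. Within the gap $(P_{d_i},P_{d_{i+1}})$, first apply \autoref{prp:interpolatingPairs} to $(P_{d_i},m_i,P_{d_{i+1}})$ with targets just above $\varphi(m_i)$, obtaining a low pair $P_{d_i}\prec m'\prec\tilde m'\prec P_{d_{i+1}}$; only then apply it to the triple $(m',\tilde m',P_{d_{i+1}})$ with targets at $M+d^*$ to obtain $P_{d^*}:=r$ and the right auxiliary $\tilde r$. Because this second application returns elements compactly above its bottom input $m'$, the relation $P_{d_i}\prec m'\prec P_{d^*}$ comes for free, and $m'$ is the left auxiliary. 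This is also how the paper's two-tier chains $p^{(n)}_1\prec\tilde p^{(n)}_1\prec p^{(n)}_2\prec\cdots$ have to be assembled: each auxiliary is created \emph{before} the next chain element and then serves as the bottom of the triple from which that element is produced. With this reordering (and your $2^{-n}$ error bookkeeping) the remainder of your argument, including the passage to $f$ and the identification of the spectrum and the induced measure, goes through as you describe.
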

\begin{proof}
We may assume that $\varphi$ is nonzero, and by rescaling we may also assume that $\|\varphi\|=1$.
Using \autoref{prp:interpolatingPairs}, we inductively find $p_{k}^{(n)},\tilde{p}_{k}^{(n)}\in\OP(A)$ for $n\geq 1$ and $k=1,\ldots,2^n$ such that
\[
p^{(n)}_1 \prec \tilde{p}^{(n)}_1 \prec p^{(n)}_{2} \prec \tilde{p}^{(n)}_2 \prec \ldots \prec p^{(n)}_{2^n} \prec \tilde{p}^{(n)}_{2^n} \prec 1
\]
for each $n\geq 1$, and such that
\[
p^{(n)}_k 
\prec p^{(n+1)}_{2k} \prec \tilde{p}^{(n+1)}_{2k} 
\prec p^{(n+1)}_{2k+1} \prec \tilde{p}^{(n+1)}_{2k+1} 
\prec p^{(n)}_{k+1}
\]
and
\[
\tfrac{k-1}{2^n} 
< \varphi(p^{(n)}_k)
\leq \varphi(\tilde{p}^{(n)}_k)
\leq \tfrac{k-1}{2^n} + \tfrac{1}{2^{n+1}}
\]
for each $n\geq 1$ and $k=1,\ldots,2^n$.

Given $n\geq 1$ and $k\in\{1,\ldots,2^n\}$, we have 
\[
p^{(n)}_k \prec p^{(n+1)}_{2k} \prec p^{(n+2)}_{2^2k} 
\prec ... \prec p^{(n+m)}_{2^mk} \prec \ldots 
\]
and we set
\[
g(\tfrac{k}{2^n}) := \sup_m p^{(n+m)}_{2^mk} \in \OP(A).
\]
Note that this is well-defined, since if $\tfrac{k}{2^n}=\tfrac{k'}{2^{n'}}$ for some other $k',n'\geq 1$, then the sequence $(p^{(n'+m)}_{2^mk'})_m$ either contains $(p^{(n+m)}_{2^mk})_m$ as a subsequence (if $n'\leq n$) or vice verse (if $n\leq n'$).
Setting $g(0)=0$, we have defined $g(t)$ for every dyadic rational in $[0,1]$.

\emph{Claim~1: Let $n\geq 1$ and $k\in\{1,\ldots,2^n\}$.
Then $\varphi( g(\tfrac{k}{2^n}) ) = \tfrac{k}{2^n}$.}
Indeed, for each $m\geq 1$, we have
\[
\tfrac{k}{2^n} - \tfrac{1}{2^{n+m}}
= \tfrac{2^mk-1}{2^{n+m}} 
\leq \varphi\big( p^{(n+m)}_{2^mk} \big)
\leq \tfrac{2^mk-1}{2^{n+m}} + \tfrac{1}{2^{n+m+1}}
\leq \tfrac{k}{2^n},
\]
and therefore
\[
\varphi\big( g(\tfrac{k}{2^n}) \big)
= \varphi\big( \sup_m p^{(n+m)}_{2^mk} \big)
= \sup_m \varphi\big( p^{(n+m)}_{2^mk} \big)
= \tfrac{k}{2^n}.
\]

\emph{Claim~2: Let $t',t\in[0,1]$ be dyadic rationals satisfying $t'<t$.
Then $g(t')\prec g(t)$.}
To prove the claim, choose $n\geq 1$ and $k',k\in\{0,\ldots,2^n\}$ such that $t'=\tfrac{k'}{2^n}$ and $t=\tfrac{k}{2^n}$.
For each $m\geq 1$, we have
\[
p^{(n+m)}_{2^mk'} \prec p^{(n+m-1)}_{2^{m-1}k'+1} \prec p^{(n+m-2)}_{2^{m-2}k'+1} \prec \ldots \prec 
p^{(n+1)}_{2k'+1} \prec p^{(n)}_{k'+1}
\]
and therefore
\[
g(t')
= g(\tfrac{k'}{2^n})
= \sup_m p^{(n+m)}_{2^mk'}
\leq p^{(n)}_{k'+1}
\leq p^{(n)}_{k}
\prec p^{(n+1)}_{2k}
\leq \sup_m p^{(n+m)}_{2^mk}
= g(\tfrac{k}{2^n})
= g(t),
\]
which proves the claim.

We now define $f\colon(-\infty,0]\to\OP(A)$ by $f(t)=0$ for $t\leq -1$ and by
\[
f(t) := \sup \big\{ g(\tfrac{k}{2^n}) : \tfrac{k}{2^n}<1+t \big\}
\]
for $t\in(-1,0]$.
It is straightforward to verify that $f$ is order-preserving, and that $f(t)=\sup\{f(t'):t'<t\}$ for every $t$.

To verify condition~(2) of \autoref{dfn:path}, let $t'<t\leq 0$.
If $t'\leq -1$, then $f(t')=0\prec f(t)$.
Otherwise, choose dyadic rationals $s',s\in(0,1]$ such that
\[
1+t'\leq s'<s < 1+t.
\]
Using Claim~2, we get
\[
f(t')\leq g(s') \prec g(s) \leq f(t).
\]

Finally, let $t\in(-1,0]$.
Choose an increasing sequence $(s_k)_k$ of dyadic numbers in $[0,1+t)$ with supremum $1+t$.
Then $f(t)=\sup_k g(s_k)$. 
Using Claim~1, we get
\[
\varphi(f(t))
= \varphi\big( \sup_k g(s_k) \big) 
= \sup_k \varphi( g(s_k) ) 
= \sup_k s_k
= 1+t.
\]

By \autoref{prp:elementFromPath}, there is a unique positive element $a\in A$ such that $f(t)=\supp((a+t)_+)$ for each $t\leq 0$.
Let $\sigma(a)$ denote the spectrum of $a$, and let $\mu$ be the measure on $\sigma(a)$ induced by $\varphi$.
For every $t\geq 0$, we have
\[
\mu((t,\infty)\cap\sigma(a))
= \varphi( \supp( (a-t)_+ ) )
= \varphi( f(-t) ) 
= 1-t,
\]
which implies that $\sigma(a)=[0,1]$ and that $\mu$ is the Lebesgue measure on $\sigma(a)$.
\end{proof}

Combining Lemmas~\ref{prp:NWSImpliesDenseOP} and~\ref{prp:constructionPath}, we obtain:

\begin{prp}
\label{prp:NWSGivesPosElement}
Let $A$ be a \ca{}, and let $\varphi\colon A\to\CC$ be a nowhere scattered, positive functional.
Then there exists a positive element $a\in A$ with spectrum $[0,\|\varphi\|]$ on which $\varphi$ induces Lebesgue measure. 
\end{prp}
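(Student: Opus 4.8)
The plan is to combine the two immediately preceding lemmas, which between them already carry all the substantive work. First I would invoke \autoref{prp:NWSImpliesDenseOP}: since $\varphi$ is nowhere scattered, the set $\varphi([0,p])$ is dense in $[0,\varphi(p)]$ for every open projection $p\in\OP(A)$. This is exactly the standing hypothesis of \autoref{prp:constructionPath}, so no further preparation is needed before applying it.

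Second, I would feed this $\varphi$ into \autoref{prp:constructionPath}. That lemma produces a path $f\colon(-\infty,0]\to\OP(A)$ with $f(-\|\varphi\|)=0$ and $\varphi(f(t))=\|\varphi\|+t$ for $t\in[-\|\varphi\|,0]$, and then, through the correspondence of \autoref{prp:elementFromPath} between bounded paths and positive elements, a positive element $a\in A$ with $f=f_a$. The measure computation recorded there, namely $\mu((t,\infty)\cap\spec(a))=\varphi(\supp((a-t)_+))=\varphi(f(-t))=\|\varphi\|-t$ (after the obvious rescaling), shows simultaneously that $\spec(a)=[0,\|\varphi\|]$ and that $\varphi$ restricts to Lebesgue measure on $\spec(a)$. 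That is precisely the assertion, so the argument for the proposition itself is a short bookkeeping combination.

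Consequently I do not expect the obstacle to lie in this final step at all; the genuine content sits entirely upstream and has already been dispatched. On one side is the passage from \emph{nowhere scattered} to the gap-free density property of \autoref{prp:NWSImpliesDenseOP}, whose crux is that any gap in $\varphi([0,q])$ can be converted, via \autoref{prp:gapGivesFDQuotient}, into a full-norm finite-dimensional ideal-quotient of a hereditary subalgebra, contradicting nowhere scatteredness by \autoref{prp:NWS-RestrictHer}. On the other side is the delicate dyadic interpolation of \autoref{prp:interpolatingPairs} and \autoref{prp:constructionPath}, which assembles open projections $p_t$ with $\varphi(p_t)$ prescribed and $p_{t'}\prec p_t$ for $t'<t$, packages them as a path, and converts the path to an element through \autoref{prp:elementFromPath}. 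With both of these in hand, the proof I would write for \autoref{prp:NWSGivesPosElement} amounts to citing \autoref{prp:NWSImpliesDenseOP} to verify the hypothesis and then citing \autoref{prp:constructionPath} to conclude.
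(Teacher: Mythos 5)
Your proposal is correct and matches the paper exactly: the paper states \autoref{prp:NWSGivesPosElement} with the single line ``Combining Lemmas~\ref{prp:NWSImpliesDenseOP} and~\ref{prp:constructionPath}, we obtain:'' and gives no further proof, which is precisely the two-step citation you describe. Your reading of where the real content lies (upstream in the density and path-construction lemmas) is also accurate.
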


It will be convenient to generalize the notion of Haar unitaries to the setting of positive functionals that are not necessarily states or tracial.

\begin{dfn}
\label{dfn:Haar}
Let $A$ be a unital \ca, and let $\varphi\colon A\to\CC$ be a positive functional.
A \emph{Haar unitary} in $A$ with respect to $\varphi$ is a unitary $u\in A$ such that $\varphi(u^k)=0$ for every $k\in\ZZ\setminus\{0\}$.
\end{dfn}

\begin{prp}
\label{prp:charSingleHaar}
Let $A$ be a unital \ca{}, and let $\varphi\colon A\to\CC$ be a positive functional.
Then the following are equivalent:
\begin{enumerate}
\item
There exists a Haar unitary in $A$ with respect to $\varphi$.
\item
There exists a positive element $a\in A$ with spectrum $[0,\|\varphi\|]$ on which $\varphi$ induces the Lebesgue measure.
\item
There exists a unital, abelian sub-\ca{} $C(X)\subseteq A$ such that $\varphi$ induces a diffuse measure on $X$.
\item
There exists a maximal abelian subalgebra (masa) $D\subseteq A$ such that $\varphi$ induces a diffuse measure on the spectrum of $D$.
\end{enumerate}
\end{prp}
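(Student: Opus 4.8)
The plan is to establish the cycle (2)$\Rightarrow$(1)$\Rightarrow$(3)$\Rightarrow$(2), supplemented by (2)$\Rightarrow$(4); since (4)$\Rightarrow$(3) is immediate (a masa is a unital abelian sub-\ca, hence an instance of the subalgebra in~(3)), this closes all four equivalences. I may assume $\varphi\neq 0$, as the zero functional makes every condition hold trivially, and I write $c:=\|\varphi\|=\varphi(1)$. For (2)$\Rightarrow$(1): given $a\in A_+$ with $\spec(a)=[0,c]$ carrying Lebesgue measure, I would set $u:=\exp(\tfrac{2\pi i}{c}a)$, a unitary in $C^*(1,a)\cong C([0,c])\subseteq A$. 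Since $\varphi$ restricted to $C^*(1,a)$ is integration against Lebesgue measure, for $k\in\ZZ\setminus\{0\}$ one computes $\varphi(u^k)=\int_0^c e^{2\pi i kx/c}\,dx=0$, so $u$ is a Haar unitary in the sense of \autoref{dfn:Haar}.

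For (1)$\Rightarrow$(3): from a Haar unitary $u$ I would take $C(X):=C^*(u)$, a unital abelian sub-\ca{} with $X=\spec(u)\subseteq\TT$ and $u$ the coordinate function. The measure $\mu$ that $\varphi$ induces on $X$, viewed on $\TT$, satisfies $\int_{\TT}z^k\,d\mu=\varphi(u^k)$, which is $0$ for $k\neq 0$ and $c$ for $k=0$; since these integrals determine $\mu$ and are matched by $c$ times the normalized Haar measure, $\mu$ equals the latter, hence is diffuse (and in fact $X=\TT$), proving~(3). For (3)$\Rightarrow$(2): if $\varphi$ induces a diffuse measure on $X$, then $\varphi|_{C(X)}$ is a diffuse positive functional by \autoref{exa:diffuseMeasure}, hence nowhere scattered by \autoref{prp:diffuseImpliesNWS}. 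Applying \autoref{prp:NWSGivesPosElement} to $\varphi|_{C(X)}$ on the \ca{} $C(X)$ produces a positive $a\in C(X)\subseteq A$ with spectrum $[0,\|\varphi|_{C(X)}\|]$ carrying Lebesgue measure; since $C(X)$ shares the unit of $A$ we have $\|\varphi|_{C(X)}\|=c$, and by spectral permanence the spectrum of $a$ is unchanged in $A$, giving~(2).

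For (2)$\Rightarrow$(4): I would enlarge $C^*(1,a)$ to a masa $D\subseteq A$ by Zorn's lemma and write $D\cong C(Y)$ with $Y=\spec(D)$. The inclusion $C^*(1,a)\hookrightarrow D$ dualizes to a continuous surjection $\pi\colon Y\to\spec(a)=[0,c]$, under which the measure $\nu$ induced by $\varphi$ on $Y$ pushes forward to Lebesgue measure on $[0,c]$. An atom of $\nu$ at $y_0$ would force $\pi_*\nu(\{\pi(y_0)\})\geq\nu(\{y_0\})>0$, contradicting that $\pi_*\nu$ is diffuse; hence $\nu$ is diffuse and~(4) holds.

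Since the substantial analytic work is already packaged in \autoref{prp:NWSGivesPosElement}, I expect the remaining difficulty here to be organizational rather than deep. The one genuinely external input is the uniqueness fact in (1)$\Rightarrow$(3) that a measure on $\TT$ is determined by the integrals $\int z^k\,d\mu$. The point requiring the most care is the pushforward argument in (2)$\Rightarrow$(4), where one must ensure that diffuseness lifts from the quotient measure on $[0,c]$ back to the masa; the accompanying identifications $\|\varphi\|=\varphi(1)$ and the invariance of $\spec(a)$ under $C(X)\subseteq A$ are routine but should be recorded explicitly.
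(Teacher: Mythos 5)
Your proof is correct and follows essentially the same route as the paper: exponentiate $a$ for (2)$\Rightarrow$(1), use the moment computation on $\TT$ to get a diffuse measure from a Haar unitary, invoke \autoref{prp:diffuseImpliesNWS} and \autoref{prp:NWSGivesPosElement} for (3)$\Rightarrow$(2), and push a diffuse measure forward along the surjection dual to an inclusion into a masa. The only difference is organizational: the paper closes the cycle with a single implication (1)$\Rightarrow$(4) (choosing a masa containing $u$ and pushing forward from $\TT$), whereas you split the same pushforward argument into (1)$\Rightarrow$(3) and (2)$\Rightarrow$(4) (choosing a masa containing $a$ and pushing forward from $[0,\|\varphi\|]$).
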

\begin{proof}
The statements hold for $\varphi=0$.
We may thus assume that $\varphi$ is nonzero.
To show that~(2) implies~(1), let $a\in A$ be as in~(2).
Set $u:=\exp(2\pi i \tfrac{a}{\|a\|})$, which is a unitary in $A$.
Given $k\in\ZZ\setminus\{0\}$, we have
\[
\varphi(u^k)
= \int_0^{\|a\|} \exp(2\pi i \tfrac{t}{\|a\|})^k dt
= \|a\| \int_0^{1} \exp(2\pi i t)^k dt
= 0.
\]

It is clear that~(4) implies~(3).
To show that~(3) implies~(2), let $C(X)\subseteq A$ be as in the statement.
Set $\psi:=\varphi|_{C(X)}\colon C(X)\to\CC$.
By assumption, $\psi$ is diffuse (see also \autoref{exa:diffuseMeasure}).
By \autoref{prp:diffuseImpliesNWS}, $\psi$ is nowhere scattered.
Since, $\|\psi\|=\|\varphi\|$, by applying \autoref{prp:NWSGivesPosElement}, we obtain the desired positive element in $C(X)$.
(This also follows from \cite[Proposition~4.1(i)]{DykHaaRor97SRFreeProd}.)

To show that~(1) implies~(4), let $u\in A$ be a Haar unitary.
Then $C^*(u)$, the sub-\ca{} of $A$ generated by $u$, is naturally isomorphic to $C(\TT)$, and $\varphi$ induces the multiple of the Lebesgue measure on $\TT$ with total mass $\|\varphi\|$.
Choose any masa $D\subseteq A$ that contains $u$, and let $X$ be a compact, Hausdorff space such that $D\cong C(X)$.
The inclusion $C(\TT)\cong C^*(u)\subseteq D\cong C(X)$ corresponds to a surjective, continuous map $X\to\TT$, and since $\varphi$ induces a diffuse measure on $\TT$, it also does on $X$.
(See also \autoref{prp:diffuseTrace-fullSubalg}.)
\end{proof}

\begin{lma}
\label{prp:HaarImpliesNoScatteredQuotient}
Let $A$ be a unital \ca{}, let $\varphi\colon A\to\CC$ be a positive functional that admits a Haar unitary, and let $I\subseteq A$ be an ideal such that $A/I$ is scattered.
Then $\|\varphi|_I\|=\|\varphi\|$.
\end{lma}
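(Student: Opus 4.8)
The plan is to exploit the Haar unitary $u$ to produce a copy of $C(\TT)$ inside $A$ carrying Lebesgue measure, and then to show that essentially all of this measure already lives inside $I$. Recall (as established in the proof of the implication (1)$\Rightarrow$(4) of \autoref{prp:charSingleHaar}) that if $u\in A$ is a Haar unitary with respect to $\varphi$, then $C^*(u)\cong C(\TT)$ with $u$ corresponding to the identity function, and $\varphi|_{C^*(u)}$ corresponds to the measure $\nu:=\|\varphi\|\cdot m$, where $m$ is the normalized Lebesgue measure on $\TT$ (using $\|\varphi\|=\varphi(1)$). We may assume $\varphi\neq 0$, the statement being trivial otherwise. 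Since $\|\varphi|_I\|\leq\|\varphi\|$ always holds, it suffices to prove the reverse inequality, and for this I would bound $\|\varphi|_I\|$ from below by $\|\varphi|_{C^*(u)\cap I}\|$.

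Next I would identify the ideal $C^*(u)\cap I$ of $C^*(u)\cong C(\TT)$. Being a closed, two-sided ideal of $C(\TT)$, it has the form $C_0(U)$ for a unique open set $U\subseteq\TT$, and the quotient $C^*(u)/(C^*(u)\cap I)$ is naturally isomorphic to $C(\TT\setminus U)$. On the other hand, writing $\pi\colon A\to A/I$ for the quotient map, the kernel of $\pi|_{C^*(u)}$ equals $C^*(u)\cap I$, so $C^*(u)/(C^*(u)\cap I)\cong\pi(C^*(u))=C^*(\pi(u))\cong C(\spec(\pi(u)))$. Tracking $u$ through these identifications shows $\TT\setminus U=\spec(\pi(u))$. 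Since $\nu$ restricted to the ideal $C_0(U)$ has norm $\nu(U)=\|\varphi\|\cdot m(U)$, and $C^*(u)\cap I\subseteq I$, we obtain
\[
\|\varphi|_I\|\geq\|\varphi|_{C^*(u)\cap I}\|=\|\varphi\|\cdot m(U).
\]
Thus it remains to show that $U$ has full Lebesgue measure, equivalently that $\spec(\pi(u))=\TT\setminus U$ is Lebesgue-null.

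This measure-theoretic step is the crux, and it is where the scatteredness of $A/I$ enters. Recall that a \ca{} is scattered if and only if every self-adjoint element has countable spectrum. I would apply this to the self-adjoint elements $\pi(u)+\pi(u)^*$ and $i(\pi(u)^*-\pi(u))$ of $A/I$: in the commutative algebra $C^*(\pi(u))\cong C(\spec(\pi(u)))$ these correspond to the functions $2\,\mathrm{Re}$ and $2\,\mathrm{Im}$ on $\spec(\pi(u))$, so by spectral permanence their (countable) spectra are $2\,\mathrm{Re}(\spec(\pi(u)))$ and $2\,\mathrm{Im}(\spec(\pi(u)))$. Hence the real and imaginary parts of the points of $\spec(\pi(u))$ each range over a countable set, forcing $\spec(\pi(u))$ itself to be countable and, in particular, Lebesgue-null. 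Therefore $m(U)=1$ and $\|\varphi|_I\|=\|\varphi\|$. The main obstacle is exactly this last point: a priori the mass of $\varphi$ could concentrate on the quotient $A/I$, and what rules this out is that the quotient can only ``see'' the countable set $\spec(\pi(u))$, which the diffuse (Lebesgue) measure supplied by the Haar unitary ignores.
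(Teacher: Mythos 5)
Your proposal is correct and follows essentially the same route as the paper: restrict $\varphi$ to $C^*(u)\cong C(\TT)$ carrying (a multiple of) Lebesgue measure, identify $C^*(u)\cap I$ with $C_0(U)$, and use scatteredness of $A/I$ to conclude that $\TT\setminus U$ is countable and hence Lebesgue-null. The only (harmless) variation is that the paper deduces countability by observing that $\pi(C^*(u))\cong C(\TT\setminus U)$ is itself scattered, whereas you apply the countable-spectrum characterization directly to the real and imaginary parts of $\pi(u)$ in $A/I$.
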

\begin{proof}
Let $\pi\colon A\to A/I$ denote the quotient map. 
Choose a Haar unitary $u$ in $A$ with respect to $\varphi$, and let $B\subseteq A$ be the sub-\ca{} generated by $u$.
Then $B\cong C(\TT)$, and the measure $\mu$ induced by $\varphi$ on $\TT$ is a multiple of the Lebesgue measure.

The ideal $I\cap B$ of $B$ corresponds to a proper open subset $U\subseteq\TT$ such that the quotient $\pi(B)$ of $B$ is naturally isomorphic to $C(\TT\setminus U)$.
Since $A/I$ is scattered, so is $\pi(B)$, and it follows that $\TT\setminus U$ is countable.
Hence, $\mu(\TT\setminus U)=0$, which implies that $\mu(U)=\mu(\TT)$.
We deduce that
\[
\|\varphi|_I\|
\geq \|\varphi|_{I\cap B}\|
= \mu(U) 
= \mu(\TT)
= \|\varphi\|
\]
which implies the desired equality $\|\varphi|_I\|=\|\varphi\|$.
\end{proof}

In the next result, given a hereditary subalgebra $B\subseteq A$ with corresponding open projection $p\in A^{**}$, we view the minimal unitization of $B$ as $\widetilde{B}=B+\CC p\subseteq A^{**}$.

\begin{thm}
\label{prp:charNWS}
Let $A$ be a \ca{}, and let $\varphi\colon A\to\CC$ be a positive functional.
Then the following are equivalent:
\begin{enumerate}
\item
$\varphi$ is nowhere scattered.
\item
For every $p\in\OP(A)$, $\varphi([0,p])$ is dense in $[0,\varphi(p)]$.
\item
For every $p\in\OP(A)$ and $t\in[0,\varphi(p))$ there exists $p'\in\OP(A)$ with $p'\prec p$ and $\varphi(p')=t$.
\item
For every hereditary sub-\ca{} $B\subseteq A$ there exists a Haar unitary in~$\widetilde{B}$.
\item
For every ideal $I\subseteq A$ there exists a Haar unitary in $\widetilde{I}$.
\end{enumerate}
\end{thm}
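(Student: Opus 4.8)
My plan is to establish the cycle $(1)\Rightarrow(2)\Rightarrow(4)\Rightarrow(5)\Rightarrow(1)$ together with the separate equivalence $(2)\Leftrightarrow(3)$; since $(2)$ lies on the cycle, this proves all five conditions equivalent. The implication $(1)\Rightarrow(2)$ is exactly \autoref{prp:NWSImpliesDenseOP}, and $(3)\Rightarrow(2)$ is immediate because $(3)$ gives $[0,\varphi(p))\subseteq\varphi([0,p])$ for every $p\in\OP(A)$. Likewise $(4)\Rightarrow(5)$ is trivial, as every ideal is a hereditary sub-\ca{}, so $(5)$ is a special case of $(4)$.

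For $(2)\Rightarrow(3)$, I fix $p\in\OP(A)$ and $t\in[0,\varphi(p))$, assuming $t>0$ (otherwise $p'=0$ works). Using the ``in particular'' part of \autoref{prp:cutDown} I first choose $s\in\OP(A)$ with $s\prec p$ and $\varphi(s)>t$; it then suffices to produce an open projection $p'\le s$ with $\varphi(p')=t$, since $p'\le s\prec p$ forces $p'\prec p$ by the monotonicity of $\prec$. Working inside $s$, where the density hypothesis of \autoref{prp:interpolatingPairs} holds by $(2)$, I inductively apply \autoref{prp:interpolatingPairs} to build a chain $r_1\prec\tilde r_1\prec r_2\prec\tilde r_2\prec\cdots\prec s$ with $\varphi(r_n)$ increasing to $t$ and all values kept $<t$. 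Setting $p':=\sup_n r_n\in\OP(A)$, the normality of $\varphi$ on increasing nets of open projections yields $\varphi(p')=\sup_n\varphi(r_n)=t$, and $p'\le s\prec p$ gives $p'\prec p$, as required.

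For $(2)\Rightarrow(4)$, let $B\subseteq A$ be a hereditary sub-\ca{} with open projection $p$, so $\widetilde B=B+\CC p$. The open projections of $B$ are precisely those $z\in\OP(A)$ with $z\le p$, and for such $z$ one has $\varphi|_B(z)=\varphi(z)$ while $[0,z]$ computed in $B$ agrees with $[0,z]$ computed in $A$; hence $(2)$ for $\varphi$ transfers to the density hypothesis of \autoref{prp:constructionPath} for $\varphi|_B$ on $B$. That lemma produces a positive element $a\in B$ with spectrum $[0,\|\varphi|_B\|]$ on which $\varphi$ induces the Lebesgue measure. Since $\|\varphi|_{\widetilde B}\|=\varphi(p)=\|\varphi|_B\|$ and $a\in B\subseteq\widetilde B$, the element $a$ verifies condition~(2) of \autoref{prp:charSingleHaar} for the unital \ca{} $\widetilde B$, which therefore contains a Haar unitary with respect to $\varphi|_{\widetilde B}$.

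Finally, for $(5)\Rightarrow(1)$ I argue by contraposition. If $\varphi$ is not nowhere scattered, then by \autoref{dfn:NWS} there are ideals $I\subseteq J\subseteq A$ with $J/I$ scattered and $\|\varphi|_I\|<\|\varphi|_J\|$. Applying $(5)$ to $J$ yields a Haar unitary in $\widetilde J$ with respect to $\varphi|_{\widetilde J}$. Now $I$ is an ideal of $\widetilde J$, and $\widetilde J/I$ is an extension of $\CC$ (or $0$) by the scattered algebra $J/I$, hence scattered; so \autoref{prp:HaarImpliesNoScatteredQuotient} applied to $\widetilde J$ gives $\|\varphi|_I\|=\|\varphi|_{\widetilde J}\|=\varphi(p_J)=\|\varphi|_J\|$, where $p_J$ is the central open projection of $J$ and the unit of $\widetilde J$; this contradicts $\|\varphi|_I\|<\|\varphi|_J\|$. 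I expect the main obstacle to be the bookkeeping around unitizations and ideal norms --- the identifications $\OP(B)=[0,p]$ and $\|\varphi|_{\widetilde B}\|=\varphi(p)$, the stability of scatteredness under adjoining a unit, and keeping the supremum $p'$ compactly contained in $p$ in the interpolation step --- rather than any new analytic input, since the genuinely delicate construction turning density into a positive element with Lebesgue spectrum is already carried out in \autoref{prp:constructionPath}.
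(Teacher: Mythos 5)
Your proposal is correct, and it leans on the same key lemmas as the paper (\autoref{prp:NWSImpliesDenseOP}, \autoref{prp:cutDown}, \autoref{prp:interpolatingPairs}/\autoref{prp:constructionPath}, \autoref{prp:charSingleHaar}, \autoref{prp:HaarImpliesNoScatteredQuotient}), but the decomposition is organized differently and, in one respect, more tightly. The paper proves $(1)\Rightarrow(2)$, $(2)\Leftrightarrow(3)$, $(4)\Rightarrow(5)\Rightarrow(1)$, and then for ``$(2)\Rightarrow(4)$'' it actually starts from nowhere scatteredness via \autoref{prp:NWS-RestrictHer} and \autoref{prp:NWSGivesPosElement} --- i.e.\ its argument there really runs from $(1)$, so the return arrow from $(2)$ into the cycle is left implicit. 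You instead transfer the density hypothesis of $(2)$ directly to the hereditary subalgebra $B$ via the identification $\OP(B)=[0,p]$ and feed it into \autoref{prp:constructionPath}, which makes $(2)\Rightarrow(4)\Rightarrow(5)\Rightarrow(1)$ a genuine chain starting from $(2)$ and closes the cycle cleanly. Your $(2)\Rightarrow(3)$ also differs in flavor: rather than extracting the exact-value projection from the full path of \autoref{prp:constructionPath}, you build a single increasing chain with \autoref{prp:cutDown} and \autoref{prp:interpolatingPairs} and take its supremum, using normality to hit the value $t$ exactly and $p'\leq s\prec p$ to keep compact containment; this is a more elementary, self-contained route to $(3)$ at the cost of redoing a small piece of the interpolation bookkeeping that \autoref{prp:constructionPath} already packages. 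All the delicate identifications you flag (scatteredness of $\widetilde{J}/I$, $\|\varphi|_{\widetilde{B}}\|=\varphi(p)=\|\varphi|_B\|$, $I$ being an ideal of $\widetilde{J}$) check out.
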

\begin{proof}
By \autoref{prp:NWSImpliesDenseOP}, (1) implies~(2).
By \autoref{prp:constructionPath}, (2) implies~(3).
It is clear that~(3) implies~(2), and that~(4) implies~(5).

To show that~(5) implies~(1), let $I\subseteq J\subseteq A$ be ideals such that $J/I$ is scattered.
By assumption, there exists a Haar unitary in $\widetilde{J}$.
If $J$ is unital, then $\widetilde{J}=J$ and then $\widetilde{J}/I = J/I$.
If $J$ is nonunital, then $\widetilde{J}/I$ is naturally isomorphic to the forced unitization of $J/I$.
Thus, in either case, $\widetilde{J}/I$ is scattered.
Applying \autoref{prp:HaarImpliesNoScatteredQuotient} at the first step, we get
\[
\|\varphi|_I\|
= \|\varphi|_{\widetilde{J}}\|
= \|\varphi|_J\|.
\]

To show that~(2) implies~(4), let $B\subseteq A$ be a hereditary sub-\ca.
Consider the restriction $\psi := \varphi|_B$. 
Then for every $p\in\OP(B)$, $\psi([0,p])$ is dense in $[0,\psi(p)]$.
Applying \autoref{prp:constructionPath}, we obtain $b\in B_+$ with spectrum $[0,\|\psi\|]$ on which $\psi$ induces Lebesgue measure.
Using that $\|\psi|_{\widetilde{B}}\|=\|\psi\|$, it follows from \autoref{prp:charSingleHaar} that~$\widetilde{B}$ contains a Haar unitary with respect to $\psi$, and hence with respect to $\varphi$.
\end{proof}

\begin{cor}
\label{cor:noScatteredIdealQuotient}
Let $A$ be a unital \ca{} that has no (nonzero) scattered ideal-quotients.
Then every positive functional on $A$ is nowhere scattered and therefore admits a Haar unitary.
\end{cor}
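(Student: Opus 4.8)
The plan is to observe that the hypothesis makes the first assertion essentially a tautology, and then to invoke the main result of the section, \autoref{prp:charNWS}, to produce the Haar unitary.

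For the first assertion, I would argue directly from \autoref{dfn:NWS}. Fix a positive functional $\varphi\colon A\to\CC$, and let $I\subseteq J\subseteq A$ be ideals such that $J/I$ is scattered. Then $J/I$ is a scattered ideal-quotient of $A$, and since $A$ has no nonzero such ideal-quotients by hypothesis, we must have $J/I=0$, that is, $I=J$. Consequently $\|\varphi|_I\|=\|\varphi|_J\|$ holds trivially, so $\varphi$ is nowhere scattered. (Equivalently, one could verify condition~(3) of \autoref{prp:firstCharNWS}: no quotient $A/I$ can contain a minimal projection, since by \cite[Theorem~1.2]{GhaKos18NCCantorBendixson} the sub-\ca{} generated by the minimal projections of $A/I$ would be a nonzero scattered ideal-quotient of $A$, contradicting the hypothesis.)

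For the second assertion, I would apply the equivalence of~(1) and~(4) in \autoref{prp:charNWS} with the hereditary sub-\ca{} $B=A$. Because $A$ is unital, the open projection corresponding to $A\subseteq A$ is the unit $1\in A^{**}$, so $\widetilde{A}=A+\CC 1=A$; thus \autoref{prp:charNWS}(4) yields a Haar unitary for $\varphi$ already inside $A$ itself. (Alternatively, \autoref{prp:NWSGivesPosElement} furnishes a positive element $a\in A$ with spectrum $[0,\|\varphi\|]$ on which $\varphi$ induces Lebesgue measure, and then \autoref{prp:charSingleHaar} converts this into a Haar unitary.)

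There is no genuine obstacle here, as the corollary is a direct specialization of \autoref{prp:charNWS}. The only points requiring a moment's care are that the nowhere-scattered condition becomes vacuous under the hypothesis (because every scattered ideal-quotient is forced to be zero), and the bookkeeping that $\widetilde{A}=A$ for unital $A$, which ensures that the Haar unitary lands in $A$ rather than in a proper unitization.
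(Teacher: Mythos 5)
Your proposal is correct and follows exactly the intended argument: the paper states this as an immediate corollary of \autoref{prp:charNWS} without a written proof, and your reasoning (vacuousness of the nowhere-scattered condition under the hypothesis, plus applying condition~(4) of \autoref{prp:charNWS} with $B=A$ and $\widetilde{A}=A$) is the natural justification. No issues.
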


\begin{exa}
A \ca{} $A$ is \emph{purely infinite} if it has no one-dimensional representations, and if an element $a\in A_+$ lies in the ideal generated by $b\in A_+$ if and only if there exists a sequence $(r_n)_n$ in $A$ such that $\lim_{n\to\infty}\|a-r_nbr_n^*\|=0$;
see \cite[Definition~4.1]{KirRor00PureInf}.
By \cite[Theorem~4.19]{KirRor00PureInf}, pure infiniteness passes to ideals and quotients.
It follows that purely infinite \ca{s} are not scattered, and that a purely infinite \ca{} has no scattered ideal-quotients.

Hence, by \autoref{cor:noScatteredIdealQuotient}, every state on a unital, purely infinite \ca{} admits a Haar unitary.
\end{exa}

We point out the following important special case of \autoref{cor:noScatteredIdealQuotient}.

\begin{cor}
\label{prp:HaarSimple}
Let $A$ be a unital, simple, nonelementary \ca{}.
Then every positive functional $\varphi\colon A\to\CC$ admits a Haar unitary.
\end{cor}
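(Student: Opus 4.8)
The plan is to deduce \autoref{prp:HaarSimple} as an immediate consequence of \autoref{cor:noScatteredIdealQuotient}. Since $A$ is unital, it is in particular a positive functional setting where the corollary applies, so the only thing I need to verify is the hypothesis of that corollary: namely, that $A$ has no nonzero scattered ideal-quotients. Once this is established, \autoref{cor:noScatteredIdealQuotient} directly gives that every positive functional $\varphi\colon A\to\CC$ is nowhere scattered and admits a Haar unitary, which is exactly the claim.

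First I would use simplicity to drastically restrict the ideal-quotients of $A$. Because $A$ is simple (and unital), its only closed two-sided ideals are $\{0\}$ and $A$ itself. An ideal-quotient of $A$ is by definition an ideal $J/I$ of a quotient $A/I$; since the quotients of $A$ are only $A$ and $\{0\}$, and the ideals of $A$ are only $A$ and $\{0\}$, the only nonzero ideal-quotient of $A$ is $A$ itself. Thus the statement ``$A$ has no nonzero scattered ideal-quotients'' reduces to the single assertion that $A$ is not scattered.

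Next I would verify that $A$ is not scattered using the nonelementary hypothesis. Recall from the excerpt that a scattered \ca{} admits a composition series whose successive quotients are elementary. For a \emph{simple} \ca{}, such a composition series must be trivial, forcing $A$ itself to be elementary. So a simple scattered \ca{} is elementary. Since $A$ is assumed to be simple and \emph{nonelementary}, it cannot be scattered. (Alternatively, one can argue directly: a unital simple scattered \ca{} has every self-adjoint element with countable spectrum and, being simple and type~$\mathrm{I}$, is a full matrix algebra, hence elementary.) Either way, the nonelementary assumption rules out scatteredness.

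The argument is short and carries no real obstacle; the only point requiring a little care is the reduction in the previous paragraph, making sure that ``simple and scattered implies elementary'' is correctly invoked from the stated characterization of scattered \ca{s} via composition series with elementary subquotients. With $A$ confirmed to be non-scattered and its only nonzero ideal-quotient being $A$ itself, the hypothesis of \autoref{cor:noScatteredIdealQuotient} is met, and the conclusion follows immediately.
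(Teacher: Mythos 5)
Your proof is correct and follows the same route as the paper, which presents this corollary precisely as a special case of \autoref{cor:noScatteredIdealQuotient}; your verification that a simple \ca{} has no nonzero scattered ideal-quotients unless it is elementary (via the composition-series characterization of scatteredness) correctly supplies the details the paper leaves implicit.
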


\begin{cor}
\label{prp:HaarSimpleSubalgebra}
Let $A$ be a unital \ca{}.
Assume that $A$ contains a unital sub-\ca{} $B\subseteq A$ such that $B$ has no scattered ideal-quotients.
(For example, $B$ is simple and nonelementary.)
Then every positive functional $\varphi\colon A\to\CC$ admits a Haar unitary.
\end{cor}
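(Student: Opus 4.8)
The plan is to reduce Corollary~\ref{prp:HaarSimpleSubalgebra} to the already-established Corollary~\ref{cor:noScatteredIdealQuotient}. The key observation is that a Haar unitary lives inside $B$ and automatically serves as a Haar unitary for the restricted functional on all of $A$. First I would restrict the given positive functional $\varphi$ to $B$, obtaining a positive functional $\psi := \varphi|_B \colon B \to \CC$. Since $B$ is unital with the same unit as $A$, and has no scattered ideal-quotients by hypothesis, Corollary~\ref{cor:noScatteredIdealQuotient} applies directly to $B$ and $\psi$, yielding a Haar unitary $u \in B$ with respect to $\psi$, that is, a unitary $u \in B$ with $\psi(u^k) = 0$ for every $k \in \ZZ \setminus \{0\}$.

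The final step is to observe that this $u$ is also a Haar unitary for $\varphi$ in $A$. Indeed, $u$ is a unitary in $A$ (as $B \subseteq A$ is a unital inclusion, so $u$ is invertible in $A$ with $u^{-1} = u^* \in B \subseteq A$), and for every $k \in \ZZ \setminus \{0\}$ we have $u^k \in B$, hence
\[
\varphi(u^k) = \psi(u^k) = 0.
\]
Thus $u$ is a Haar unitary in $A$ with respect to $\varphi$, which is exactly the conclusion. For the parenthetical remark, I would invoke that a simple, nonelementary \ca{} has no (nonzero) scattered ideal-quotients: its only ideal-quotients are $\{0\}$ and $B$ itself, and $B$ being nonelementary means it is not isomorphic to the compact operators on any Hilbert space, so it is not scattered (a simple scattered \ca{} is elementary).

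This argument is essentially immediate, so I do not anticipate a genuine obstacle; the only point requiring mild care is confirming that the unitality hypothesis is used correctly, namely that $B$ and $A$ share the same unit so that unitaries of $B$ are genuinely unitaries of $A$ and the restriction $\psi$ has $\|\psi\| = \|\varphi\|$ with the same unit witnessing normalization. Everything else is a direct transfer through the inclusion $B \subseteq A$.
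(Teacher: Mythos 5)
Your proof is correct and is exactly the intended argument: the paper states this as an immediate corollary of \autoref{cor:noScatteredIdealQuotient}, obtained by restricting $\varphi$ to $B$ and noting that a Haar unitary in the unital subalgebra $B$ is a Haar unitary for $\varphi$ in $A$. Your justification of the parenthetical remark (a simple scattered \ca{} is elementary) also matches the paper's reasoning.
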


\begin{exa}
\label{exa:rr0}
Let $A$ be a unital \ca{} of real rank zero that has no finite-dimensional representations.
By \cite[Corollary~2.4]{EllRor06Perturb}, $A$ contains a unital, simple, nonelementary sub-\ca{}.
Hence, by \autoref{prp:HaarSimpleSubalgebra}, every positive functional on $A$ admits a Haar unitary.
\end{exa}

\begin{exa}
\label{exa:HaarNotImpliesNWS}
Let $(\delta_n)_{n\in\ZZ}$ be the standard orthonormal basis of $\ell^2(\ZZ)$, and let $U\in\Bdd(\ell^2(\ZZ))$ be the bilateral shift satisfying $U\delta_n=\delta_{n+1}$.
Let $\varphi\colon\Bdd(\ell^2(\ZZ))\to\CC$ be the vector state induced by $\delta_0$, that is, $\varphi(a)=\langle a\delta_0,\delta_0\rangle$ for $a\in\Bdd(\ell^2(\ZZ))$.
Then 
\[
\varphi(U^k)
= \langle U^k\delta_0,\delta_0\rangle
= \langle \delta_k,\delta_0\rangle
= \begin{cases}
0,& \text{ if } k\neq 0 \\
1,& \text{ if } k=0.
\end{cases}
\]
which shows that $U$ is a Haar unitary with respect to $\varphi$.
However, the ideal $I:=K(\ell^2(\ZZ))$ of compact operators is elementary and satisfies $\|\varphi|_I\|=\|\varphi\|$.
Thus, $\varphi$ is not nowhere scattered.

This shows that a positive functional admitting a Haar unitary is not necessarily nowhere scattered.
In the next section, we show that this phenomenon does not occur for tracial functionals.
\end{exa}

\section{Traces admitting Haar unitaries}

In this section, we prove the main result of this paper:
A tracial state on a unital \ca{} admits a Haar unitary if and only if it is diffuse;
see \autoref{prp:main}.

By a \emph{trace} on a \ca{} $A$ we mean a positive functional $\tau\colon A\to\CC$ that is tracial: $\tau(ab)=\tau(ba)$ for all $a,b\in A$.
Every trace $\tau\colon A\to\CC$ is bounded and therefore extends uniquely to a normal trace $A^{**}\to\CC$ that we also denote by $\tau$.
Recall that $\tau$ is diffuse if $\tau(e)=0$ for every minimal projection $e\in A^{**}$.

\begin{lma}
\label{prp:charDiffuseTrace}
Let $A$ be a \ca{} and let $\tau\colon A\to\CC$ be a trace.
Then the following are equivalent:
\begin{enumerate}
\item
$\tau$ is diffuse;
\item
$\tau$ does not dominate a nonzero trace that factors through a finite-di\-men\-sion\-al quotient of $A$;
\item
there is no surjective ${}^*$-homomorphism $\pi\colon A\to M_n(\CC)$ (for some $n\geq 1$) such that $\tau$ dominates a nonzero multiple of $\tr_n\circ\pi$. where $\tr_n$ denotes the tracial state on $M_n(\CC)$.
\end{enumerate}
\end{lma}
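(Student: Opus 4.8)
The plan is to prove the equivalence of the three conditions by establishing the implications $(1)\Rightarrow(3)\Rightarrow(2)\Rightarrow(1)$, using the characterization of diffuseness in terms of domination by pure states from \autoref{pgr:diffuse}. The key structural fact I would exploit is that a trace factoring through a finite-dimensional quotient decomposes into finitely many multiples of traces of the form $\tr_n\circ\pi$ for surjective ${}^*$-homomorphisms $\pi\colon A\to M_n(\CC)$, since every finite-dimensional \ca{} is a finite direct sum of matrix algebras and every trace on such a sum is a positive combination of the normalized matrix traces on the summands.

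For $(1)\Rightarrow(3)$, I would argue by contraposition. If $\tau$ dominates $\lambda(\tr_n\circ\pi)$ for some $\lambda>0$ and some surjection $\pi\colon A\to M_n(\CC)$, then I would locate a minimal projection in $A^{**}$ on which $\tau$ does not vanish: the functional $\tr_n\circ\pi$ extends to $A^{**}$ and is supported on the central summand $M_n(\CC)$, which contains rank-one (hence minimal) projections $e$ with $\tr_n\circ\pi(e)=1/n>0$. Since $\tau\geq\lambda(\tr_n\circ\pi)$ forces $\tau(e)\geq\lambda/n>0$, the trace $\tau$ is not diffuse. The implication $(3)\Rightarrow(2)$ is where the finite-dimensional decomposition does the work: any nonzero trace factoring through a finite-dimensional quotient $A\to F$ is, after composing with the projection onto some matrix summand $M_n(\CC)$ of $F$ on which it is nonzero, a positive multiple of a trace of the type appearing in~(3), so domination of such a trace yields domination of a trace as in~(3).

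For $(2)\Rightarrow(1)$, I would again proceed by contraposition, and here the traciality is essential. Suppose $\tau$ is not diffuse. By \autoref{pgr:diffuse} there is a minimal projection $e\in A^{**}$ with $\tau(e)>0$; equivalently, $\tau$ dominates a nonzero multiple of a pure state. The task is to promote this to domination of a trace factoring through a \emph{finite-dimensional} quotient. The plan is to pass to the GNS representation $\pi_\tau$ and use that $\pi_\tau(A)''$ carries a normal trace; since $\tau$ is not diffuse, \autoref{prp:diffuseGNS} shows $\pi_\tau(A)''$ is not diffuse and hence contains a minimal projection, so it has a finite type~$\mathrm{I}_n$ direct summand on which the trace restricts to a multiple of the standard trace. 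I would then show that the central projection cutting out this summand is the image of a central projection of $A^{**}$ corresponding to an ideal-quotient of $A$, and that traciality forces the relevant matrix block to come from a genuine finite-dimensional quotient $M_n(\CC)$ of $A$ onto which $\tau$ restricts as a multiple of $\tr_n$.

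The main obstacle is precisely this last step: translating the von Neumann algebraic finite-dimensional summand back into an honest finite-dimensional quotient of the \emph{C*-algebra} $A$. Traciality is what rules out the phenomenon of \autoref{exa:HaarNotImpliesNWS}, where a pure state detects an elementary ideal but no finite-dimensional quotient exists; the delicate point is that for a trace, a minimal projection in $\pi_\tau(A)''$ not only generates a type~$\mathrm{I}_n$ factor but that the surrounding central structure, being tracial, must itself be finite-dimensional, so that $\pi_\tau(A)''$ has a minimal \emph{central} projection with finite-dimensional corner, and this corner is a quotient of $A$. I expect to need a careful argument that a trace on a \ca{} with a nonzero atomic part that is also tracial yields a finite-dimensional representation, likely invoking that the weak closure of the image of $A$ in a type~$\mathrm{I}_n$ factor under a trace-preserving map is all of $M_n(\CC)$ and that the preimage is a finite-codimensional ideal.
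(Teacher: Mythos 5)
Your proposal is correct and follows essentially the same route as the paper: the paper's proof of (3)$\Rightarrow$(1) takes a minimal projection $e$ with $\tau(e)>0$, notes that its central cover cuts out a type~$\mathrm{I}$ factor which must be $M_n(\CC)$ because it carries a nonzero finite trace, and observes that $a\mapsto c(e)a$ is then a surjection $A\to M_n(\CC)$ with $\tau\geq\tau(c(e))\cdot(\tr_n\circ\pi)$ --- exactly the mechanism you flag as the ``main obstacle,'' which in fact reduces to the short weak-density argument you already sketch. The only cosmetic differences are the orientation of the implication cycle and your detour through the GNS representation instead of working directly with the central cover in $A^{**}$.
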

\begin{proof}
To show that~(1) implies~(2), assume that $\tau$ is diffuse, and let $I\subseteq A$ be an ideal such that $A/I$ is finite-dimensional.
Let $z\in A^{**}$ be the central, open projection corresponding to $I$.
We have natural isomorphisms $(1-z)A^{**}\cong (A/I)^{**}\cong A/I$.
Since $\tau$ vanishes on minimal projections, we get $\tau(1-z)=0$.
It follows that $\tau$ does not dominate a nonzero trace that factors through the quotient map $A\to A/I$.

It is clear that~(2) implies~(3).
To show that~(3) implies~(1), let $e\in A^{**}$ be a minimal projection.
To reach a contradiction, assume that $\tau(e)>0$.
Let $c(e)\in A^{**}$ be the central cover of $e$.
Then $c(e)A^{**}$ is a type~$\mathrm{I}$~factor.
Since $\tau$ restricts to a nonzero trace on $c(e)A^{**}$, we have $c(e)A^{**}\cong M_n(\CC)$ for some $n$.
It follows that the map $\pi\colon A\to M_n(\CC)$, $a\mapsto c(e)a$, is surjective.
For each $a\in A_+$, we have $a\geq c(e)a$ in $A^{**}$ and therefore
\[
\tau(a)
\geq \tau(c(e)a)
= \tau(c(e))\cdot (\tr_n\circ\pi)(a).
\]
Since $\tau(c(e))\geq\tau(e)>0$, we have shown that $\tau$ dominates a nonzero multiple of $\tr_n\circ\pi$, contradicting the assumption~(3).
\end{proof}

\begin{lma}
\label{prp:diffuseTrace-fullSubalg}
Let $A$ be a \ca{}, let $\tau\colon A\to\CC$ be a trace, and let $B\subseteq A$ be a sub-\ca{} such that $\tau|_{B}$ is diffuse and $\|\tau|_B\|=\|\tau\|$.
Then $\tau$ is diffuse.
\end{lma}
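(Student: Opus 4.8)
The plan is to pass to the enveloping von Neumann algebra $A^{**}$, where $\tau$ extends to a normal trace, and to exploit the fact that the support of a trace is central. Write $z_\tau\in A^{**}$ for this central support projection, so that $\tau$ restricts to a faithful, normal, finite trace on $M:=z_\tau A^{**}$. I would first translate diffuseness into this picture: $\tau$ fails to be diffuse exactly when $M$ contains a minimal projection. Indeed, a minimal $e\in A^{**}$ with $\tau(e)>0$ satisfies $ez_\tau=e$, so $e\in M$, while conversely a minimal projection of $M$ is minimal in $A^{**}$ and has positive trace by faithfulness. Moreover, passing to central covers, such a minimal projection yields a nonzero central projection $z\le z_\tau$ (central already in $A^{**}$, since $Z(M)=z_\tau Z(A^{**})$) for which $zA^{**}=zM$ is a factor containing a minimal projection; as $\tau$ is a faithful finite normal trace, this factor is finite type $\mathrm{I}$, that is, $zA^{**}\cong M_n(\CC)$ for some $n$.

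Next I would set up the subalgebra side. Let $\overline{B}\subseteq A^{**}$ be the weak*-closure of $B$, canonically $B^{**}$, with unit $q=1_{\overline{B}}$. The hypothesis $\|\tau|_B\|=\|\tau\|$ reads $\tau(q)=\tau(1)$, hence $\tau(1-q)=0$ and therefore $z_\tau\le q$. Consider the normal $*$-homomorphism $\overline{B}\to M$, $x\mapsto z_\tau x$, and let $N:=z_\tau\overline{B}$ be its image, a von Neumann subalgebra of $M$ with unit $z_\tau q=z_\tau$. Since $\tau(x^*x)=\tau\big((z_\tau x)^*(z_\tau x)\big)$ for $x\in\overline{B}$ and $\tau$ is faithful on $M$, the kernel of $x\mapsto z_\tau x$ is precisely the null ideal of $\tau|_{\overline{B}}$, so $N$ is the faithful compression of $\overline{B}$ by the (central) support of $\tau|_{\overline{B}}$. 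In particular $N\cong\pi_{\tau|_B}(B)''$, and so $N$ is diffuse because $\tau|_B$ is (using \autoref{prp:diffuseGNS}, or directly the trace-support-is-central description of diffuseness).

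The heart of the argument is then a short rigidity observation. Assume toward a contradiction that $\tau$ is not diffuse, and take $z\le z_\tau$ central with $zA^{**}\cong M_n(\CC)$ as above. Restricting $x\mapsto zx$ to $N$ gives a normal $*$-homomorphism $\rho\colon N\to zA^{**}\cong M_n(\CC)$ with $\rho(1_N)=z z_\tau=z$. I claim $\rho=0$: its kernel is a central summand $(1_N-w)N$, so $wN\cong\rho(N)$ embeds in $M_n(\CC)$ and is finite-dimensional; but $wN$ is a direct summand of the diffuse algebra $N$, hence itself diffuse, and a nonzero finite-dimensional von Neumann algebra has minimal projections. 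This forces $wN=0$, so $\rho=0$, and then $z=\rho(1_N)=0$, a contradiction. Hence $\tau$ is diffuse.

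The step deserving the most care is the bookkeeping among the three projections $1$, $q$, and $z_\tau$, together with the verification that $N=z_\tau\overline{B}$ inherits diffuseness from $\tau|_B$; once $z_\tau\le q$ is established and $N$ is identified as the faithful part of $\overline{B}$, the concluding rigidity step is essentially automatic. An alternative route would be purely functional-analytic through \autoref{prp:charDiffuseTrace}: assuming $\tau$ dominates a multiple of $\tr_n\circ\pi$ for some surjection $\pi\colon A\to M_n(\CC)$ and extracting from it a nonzero finite-dimensional quotient of the diffuse algebra $N$; but the von Neumann algebraic formulation above seems cleaner and makes the contradiction transparent.
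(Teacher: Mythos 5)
Your proof is correct. It follows the same basic strategy as the paper's argument --- reduce to a minimal projection of positive trace, pass to its central cover to obtain a summand isomorphic to $M_n(\CC)$, and use $\tau(1-q)=0$ together with diffuseness of $\tau|_B$ to kill that summand --- but the packaging genuinely differs in two respects. First, by compressing to the central support $z_\tau$ at the outset you make $\tau$ faithful, which forces $z\leq z_\tau\leq q$ and thereby absorbs the paper's first case (where $c(e)B=\{0\}$, handled there by the estimate $e\leq c(e)\leq 1-p$, with $p$ denoting your $q$). Second, where the paper argues at the level of the functional --- observing that $c(e)p$ lies in $B^{**}$ and that the diffuse trace $\tau|_B$ must vanish on the resulting finite-dimensional quotient of $B$ --- you convert diffuseness of the functional $\tau|_B$ into diffuseness of the von Neumann algebra $N\cong\pi_{\tau|_B}(B)''$ via \autoref{prp:diffuseGNS}, and conclude with the rigidity observation that a diffuse von Neumann algebra admits no nonzero finite-dimensional weak*-closed quotient. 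The mathematical content is essentially the same; your version trades the paper's two-case analysis for the slightly heavier setup of identifying $N$ with the GNS closure of $B$, and in return makes the contradiction a one-line consequence of $\rho(1_N)=z$. All the supporting steps (centrality of $z_\tau$, the identification of $\ker(x\mapsto z_\tau x)$ with the null ideal of $\tau|_{\overline{B}}$, and the fact that a central summand of a diffuse von Neumann algebra is diffuse) are justified, so I see no gap.
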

\begin{proof}
The inclusion map $B\to A$ induces a natural injective ${}^*$-homomorphism $B^{**}\to A^{**}$.
We let $p\in A^{**}$ denote the projection that is the image of the unit of~$B^{**}$.
Note that $p$ is the weak*-limit of any positive, increasing approximate unit of $B$ in $A^{**}$.
It follows that
\[
\tau(p)
= \|\tau|_B\|
= \|\tau\|
= \tau(1),
\]
and thus $\tau(1-p)=0$.

To show that $\tau$ is diffuse, let $e\in A^{**}$ be a minimal projection, and let $c(e)$ be its central cover.
To reach a contradiction, assume that $\tau(e)>0$.
Then $c(e)A^{**}$ is a type~$\mathrm{I}$~factor with a nonzero trace, and so $c(e)A^{**}\cong M_n(\CC)$ for some $n$.
We let $\pi\colon A\to M_n(\CC)$ denote the surjective ${}^*$-homomorphism $a\mapsto c(e)a$.

We distinguish two cases.
If $\pi(B)=\{0\}$, then $c(e)p=0$, and it follows that
\[
e\leq c(e)\leq 1-p
\]
and therefore $\tau(e)=0$.

If $\pi(B)$ is nonzero, then $\pi(B)$ is a nonzero finite-dimensional quotient of $B$.
Set $J := \ker(\pi)$, which is an ideal in $A$.
We naturally identify $J^{**} = (1-c(e))A^{**}$. 
Note that the projection $p \in A^{**}$ is the open projection corresponding to $BAB$, the hereditary sub-\ca{} generated by $B$.
It follows that $(1-c(e))p$ is the open projection in $A^{**}$ corresponding to $J\cap BAB$.
Using that $J \cap BAB = (J\cap B)A(J\cap B)$, we see that $(1-c(e))p$ belongs to $B^{**}$ -- it is the open projection in $B^{**}$ corresponding to the ideal $J\cap B \subseteq B$.

Hence, the nonzero projection $c(e)p$ belongs to $B^{**}$, and we have natural isomorphisms $c(e)pB^{**} \cong (B/(J\cap B))^{**}\cong \pi(B)^{**}\cong \pi(B)$.
Using that $\tau|_{B}$ is diffuse we deduce that $\tau(c(e)p)=0$.
It follows that $\tau(c(e))=0$ and so $\tau(e)=0$.
\end{proof}

\begin{prp}
\label{prp:traceDiffuseNWS}
A trace on a \ca{} is diffuse if and only if it is nowhere scattered.
\end{prp}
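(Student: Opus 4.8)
The plan is to prove the two implications separately, with the forward direction being essentially free and traciality entering only in the converse. First, the implication that diffuse implies nowhere scattered holds for \emph{every} positive functional, so it is immediate from \autoref{prp:diffuseImpliesNWS} and requires no traciality at all.

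For the converse I would argue by contraposition, showing that a trace which is not diffuse fails to be nowhere scattered. This is exactly the place where traciality must be used: \autoref{exa:HaarNotImpliesNWS} exhibits a non-tracial positive functional that is not nowhere scattered, so the implication genuinely cannot hold for arbitrary functionals. So suppose $\tau$ is not diffuse. By \autoref{prp:charDiffuseTrace}, there exist $n\geq 1$, a surjective ${}^*$-homomorphism $\pi\colon A\to M_n(\CC)$, and $\lambda>0$ such that $\tau$ dominates $\lambda\,(\tr_n\circ\pi)$ on $A_+$. I would set $I:=\ker\pi$, so that $A/I\cong M_n(\CC)$ is elementary, and in particular scattered. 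Let $z\in A^{**}$ be the central open projection corresponding to $I$, so that $(1-z)A^{**}\cong(A/I)^{**}\cong M_n(\CC)$ and $1-z$ corresponds to the unit of $M_n(\CC)$.

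The crux is to check that $\tau$ carries strictly positive mass on this finite-dimensional quotient. Since $\tau-\lambda\,(\tr_n\circ\pi)$ is a positive functional on $A$, its normal extension to $A^{**}$ is again positive, so the domination $\lambda\,(\tr_n\circ\pi)\leq\tau$ persists for the normal extensions on $A^{**}_+$. The normal extension of $\tr_n\circ\pi$ vanishes on $zA^{**}$ and restricts to $\tr_n$ on $(1-z)A^{**}\cong M_n(\CC)$, so evaluating at the central projection $1-z$ gives $(\tr_n\circ\pi)(1-z)=\tr_n(1)=1$, whence $\tau(1-z)\geq\lambda>0$. Taking $J:=A$ (with central open projection $1\in A^{**}$) and $I$ as above, \autoref{rmk:NWS} then yields
\[
\|\tau|_I\|=\tau(z)=\tau(1)-\tau(1-z)\leq\|\tau\|-\lambda<\|\tau\|=\|\tau|_J\|.
\]
Since $J/I\cong M_n(\CC)$ is scattered, this shows $\tau$ is not nowhere scattered, completing the contraposition.

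The only delicate points are the mass computation $(\tr_n\circ\pi)(1-z)=1$ and the observation that domination of positive functionals survives passage to normal extensions on $A^{**}$; everything else is bookkeeping. I expect no real obstacle here, since the substantive step — converting a nonzero trace on a minimal projection into a finite-dimensional quotient on which $\tau$ has positive mass — has already been carried out in \autoref{prp:charDiffuseTrace}, and it is precisely traciality that forces the relevant block of $A^{**}$ to be finite-dimensional (hence a \emph{scattered} quotient of $A$) rather than merely a type~$\mathrm{I}$ factor.
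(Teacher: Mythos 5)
Your proof is correct, but it takes a genuinely different route from the paper's. For the nontrivial implication the paper argues directly: from a nowhere scattered trace it extracts, via \autoref{prp:NWSGivesPosElement} (i.e.\ the full strength of Lemmas~\ref{prp:NWSImpliesDenseOP} and~\ref{prp:constructionPath}), a positive element with spectrum $[0,\|\tau\|]$ carrying Lebesgue measure, and then applies \autoref{prp:diffuseTrace-fullSubalg} to the commutative subalgebra it generates, which is diffuse and has full norm. You instead prove the contrapositive: a nondiffuse trace dominates $\lambda\,(\tr_n\circ\pi)$ for a surjection $\pi\colon A\to M_n(\CC)$ by \autoref{prp:charDiffuseTrace}, and passing to normal extensions shows $\tau(1-z)\geq\lambda>0$ for the central open projection $z$ of $\ker\pi$, so $\tau$ gives positive mass to the elementary quotient $A/\ker\pi$ and fails condition~(1) (equivalently (2) or (3)) of \autoref{prp:firstCharNWS}. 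The two delicate points you flag --- that domination survives normal extension and that $(\tr_n\circ\pi)^{\sim}(1-z)=1$ --- are both standard and correct. Your argument is considerably lighter: it bypasses the path-construction machinery of Section~4 entirely and uses only \autoref{prp:diffuseImpliesNWS}, \autoref{prp:charDiffuseTrace}, and the definition of nowhere scatteredness, and it isolates cleanly where traciality enters (forcing the type~$\mathrm{I}$ factor summand to be finite-dimensional, hence a scattered quotient). What the paper's route buys is that it is essentially free once \autoref{prp:NWSGivesPosElement} has been established for the main theorem, and it produces the diffuse commutative subalgebra (and hence the Haar unitary) as a byproduct rather than only the abstract equivalence.
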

\begin{proof}
Let $A$ be a \ca{}, and let $\tau\colon A\to\CC$ be a trace.
If $\tau$ is diffuse, then it is nowhere scattered by \autoref{prp:diffuseImpliesNWS}.
To show the converse, assume that $\tau$ is nowhere scattered.
By \autoref{prp:NWSGivesPosElement}, there exists $a\in A_+$ with spectrum $[0,\|\varphi\|]$, on which $\varphi$ induces Lebesgue measure.
Let $B\subseteq A$ be the sub-\ca{} generated by $a$.
Then $B$ is commutative and $\varphi|_B$ is diffuse (see also \autoref{exa:diffuseMeasure}).
We have $\|\varphi|_B\|=\|\varphi\|$, whence it follows from \autoref{prp:diffuseTrace-fullSubalg} that $\tau$ is diffuse.
\end{proof}

We summarize our findings: 

\begin{thm}
\label{prp:main}
Let $\tau\colon A\to\CC$ be a trace on a \ca.
Then the following are equivalent:
\begin{enumerate}
\item
$\tau$ is diffuse;
\item
the weak*-closure of $A$ in the GNS-representation induced by $\tau$ is a diffuse von Neumann algebra;
\item
$\tau$ is nowhere scattered;
\item
$\tau$ does not dominate a nonzero trace that factors through a finite-di\-men\-sion\-al quotient of $A$;
\item
there exists $a\in A_+$ with spectrum $[0,\|\tau\|]$ on which $\tau$ induces the Lebesgue measure;
\item 
there exists a masa $C(X)\subseteq \widetilde{A}$ such that $\tau$ induces a diffuse measure on $X$;
\item
there exists a Haar unitary in $\widetilde{A}$.
\end{enumerate}
\end{thm}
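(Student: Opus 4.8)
The plan is to assemble the theorem entirely from the machinery developed in the preceding sections; the substantive work has already been carried out, so what remains is to organize the seven conditions into two clusters and bridge them. The first cluster consists of the ``structural'' conditions (1)--(4), which I would show are mutually equivalent using the three trace lemmas; the second consists of the ``concrete'' conditions (5)--(7), which are handled in one stroke by \autoref{prp:charSingleHaar}.

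For the first cluster, condition~(1)$\,\Leftrightarrow\,$(2) is immediate from \autoref{prp:diffuseGNS}: the weak*-closure of $A$ in its GNS representation is precisely $\pi_\tau(A)''$, and that lemma says $\tau$ is diffuse exactly when $\pi_\tau(A)''$ is diffuse. Condition~(1)$\,\Leftrightarrow\,$(3) is \autoref{prp:traceDiffuseNWS}, and condition~(1)$\,\Leftrightarrow\,$(4) is the equivalence of (1) and (2) in \autoref{prp:charDiffuseTrace}. Thus the four structural conditions collapse to a single one. For the second cluster, I would observe that conditions (5), (6), (7) are, respectively, conditions (2), (4), (1) of \autoref{prp:charSingleHaar} applied to the state $\tau$, using that $\|\tau\|=\tau(1)=1$ (so the spectrum in (5) is $[0,1]$ and the induced measure is the normalized Lebesgue measure, and a masa in a unital \ca{} is of the form $C(X)$ for its spectrum $X$). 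That proposition delivers the mutual equivalence of (5), (6), (7) at once.

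It remains to link the two clusters into a single cycle. In the forward direction I would use (3)$\,\Rightarrow\,$(5): a nowhere scattered positive functional yields, by \autoref{prp:NWSGivesPosElement}, a positive element of spectrum $[0,\|\tau\|]=[0,1]$ carrying Lebesgue measure. To close the loop I would argue (5)$\,\Rightarrow\,$(1): given such an $a$, the unital abelian sub-\ca{} $B\subseteq A$ generated by $a$ is isomorphic to $C([0,1])$, the restriction $\tau|_B$ corresponds to the (diffuse) Lebesgue measure and is therefore diffuse, and $\|\tau|_B\|=\tau(1)=1=\|\tau\|$; then \autoref{prp:diffuseTrace-fullSubalg} forces $\tau$ to be diffuse. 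Together with the equivalences already established, this yields the full chain.

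I do not expect any step here to present a serious obstacle, since the delicate construction of interpolating open projections and of a positive element with prescribed spectral distribution lives upstream, in \autoref{prp:NWSGivesPosElement} and \autoref{prp:charSingleHaar}. The only point requiring a little care is the norm bookkeeping when invoking \autoref{prp:diffuseTrace-fullSubalg}, namely the identity $\|\tau|_B\|=\|\tau\|$; this is automatic because $A$ is unital and $B$ is taken to contain the unit, so $\tau|_B$ already sees the full mass of the state. In short, the present theorem is a clean repackaging of the genuinely hard content proved earlier.
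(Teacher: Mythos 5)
Your proposal is correct and follows essentially the same route as the paper: the same three lemmas handle the equivalence of (1)--(4), \autoref{prp:charSingleHaar} handles (5)--(7), and the clusters are bridged by \autoref{prp:NWSGivesPosElement} in one direction and \autoref{prp:diffuseTrace-fullSubalg} in the other (the paper closes the loop at (6) rather than (5), an immaterial difference). No gaps.
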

\begin{proof}
By \autoref{prp:diffuseGNS}, (1) and~(2) are equivalent.
By \autoref{prp:traceDiffuseNWS}, (1) and~(3) are equivalent.
By \autoref{prp:charDiffuseTrace}, (1) and~(4) are equivalent.
By \autoref{prp:NWSGivesPosElement}, (3) implies~(5).
By \autoref{prp:charSingleHaar}, (5) implies~(6), and (6) is equivalent to~(7).

Lastly, let us show that~(6) implies~(1).
If $A$ is unital, this follows from \autoref{prp:diffuseTrace-fullSubalg} (see also \autoref{exa:diffuseMeasure}).
If $A$ is nonunital, consider the map $C(X) \to \widetilde{A} \to \widetilde{A}/A \cong \CC$, which corresponds to evaluation at some $x\in X$.
Note that $A\cap C(X)$ is naturally isomorphic to $C_0(X\setminus\{x\})$.
Since the measure on $X$ induced by $\tau$ is diffuse, it gives zero mass to $\{x\}$.
It follows that $\|\tau|_{A\cap C(X)}\| = \|\tau|_{C(X)}\| = \|\tau\|$ and that $\tau|_{A\cap C(X)}$ is diffuse.
Hence, $\tau$ is diffuse by \autoref{prp:diffuseTrace-fullSubalg}.
\end{proof}

\begin{cor}
\label{prp:charNoFDRep}
A unital \ca{} has no finite-dimensional representations if and only if each of its tracial states admits a Haar unitary.
\end{cor}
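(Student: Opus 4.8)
The plan is to derive this directly from \autoref{prp:main}, which for a single tracial state $\tau$ already identifies the existence of a Haar unitary (its condition~(7)) with the failure of $\tau$ to dominate a nonzero trace factoring through a finite-dimensional quotient of $A$ (its condition~(4)). The one extra ingredient is the elementary dictionary between finite-dimensional representations and finite-dimensional quotients: a unital \ca{} $A$ has a nonzero finite-dimensional representation if and only if it admits a unital surjection onto $M_n(\CC)$ for some $n\geq 1$, equivalently a nonzero finite-dimensional quotient. Indeed, any nonzero finite-dimensional representation decomposes into irreducibles, and a finite-dimensional irreducible representation is automatically a unital surjection onto some $M_n(\CC)$ (its image has trivial commutant, and $\pi(1)$, lying in that commutant, must be the identity); conversely, a nonzero finite-dimensional quotient is isomorphic to $\bigoplus_i M_{n_i}(\CC)$, and composing the quotient map with projection onto a matrix summand gives a finite-dimensional representation.

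For the forward implication I would suppose that $A$ has no finite-dimensional representations, so that $A$ admits no nonzero finite-dimensional quotient at all. Then no tracial state can dominate a nonzero trace factoring through such a quotient, simply because none exists; hence every tracial state satisfies condition~(4) of \autoref{prp:main} and therefore admits a Haar unitary by the equivalence (4)$\,\Leftrightarrow\,$(7) established there.

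For the converse I would argue by contraposition: assume $A$ has a finite-dimensional representation and fix, as above, a unital surjection $\pi\colon A\to M_n(\CC)$ with $n\geq 1$. Then $\tau:=\tr_n\circ\pi$ is a tracial state on $A$ that factors through the finite-dimensional quotient $M_n(\CC)$, so $\tau$ dominates a nonzero trace (namely itself) factoring through a finite-dimensional quotient. By \autoref{prp:main} (or directly by \autoref{prp:charDiffuseTrace}) it fails condition~(4), hence is not diffuse and admits no Haar unitary, exhibiting a tracial state without a Haar unitary.

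Since all the analytic content is contained in \autoref{prp:main}, there is no serious obstacle here: the argument is purely a translation between the representation-theoretic and quotient-theoretic formulations of finite-dimensionality, the only point requiring a little care being the verification that the chosen surjection $\pi$ is unital so that $\tr_n\circ\pi$ is genuinely a tracial state.
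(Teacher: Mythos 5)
Your proof is correct and is exactly the intended derivation: the paper states this as an immediate corollary of \autoref{prp:main} (via the equivalence of its conditions (4) and (7)), and your careful translation between nonzero finite-dimensional representations and unital surjections onto $M_n(\CC)$ — including the check that an irreducible finite-dimensional representation is automatically unital and surjective, so that $\tr_n\circ\pi$ is a genuine tracial state — fills in precisely the routine details the paper leaves implicit.
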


\begin{cor}
\label{prp:traceSimple}
Every trace on a unital, simple, nonelementary \ca{} is diffuse and admits a Haar unitary.
\end{cor}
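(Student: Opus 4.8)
The plan is to reduce the entire statement to the single structural fact that a unital, simple, nonelementary \ca{} has no nonzero scattered ideal-quotients, and then to feed this into the machinery of the previous section. First I would record why $A$ itself is not scattered: since $A$ is simple, unital and nonelementary, it cannot be finite-dimensional (a finite-dimensional simple \ca{} is some $M_n(\CC)$, which is elementary), so $A$ is infinite-dimensional; moreover a scattered \ca{} is of type~$\mathrm{I}$ and a simple type~$\mathrm{I}$ \ca{} is elementary, so a simple nonelementary \ca{} fails to be scattered. Next I would note that, because $A$ is simple, its only ideals are $\{0\}$ and $A$, hence its only ideal-quotients are $\{0\}$ and $A$ itself; as $A$ is not scattered, $A$ has no nonzero scattered ideal-quotient.

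With this established, \autoref{cor:noScatteredIdealQuotient} applies verbatim and shows that every positive functional on $A$ is nowhere scattered and admits a Haar unitary; in particular this holds for $\tau$, which settles the Haar unitary assertion (one could equally just quote \autoref{prp:HaarSimple}). To obtain diffuseness I would then use that $\tau$ is tracial: since $\tau$ is a nowhere scattered trace, \autoref{prp:traceDiffuseNWS} gives at once that $\tau$ is diffuse.

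The only genuinely nontrivial point is the first step—the classification input that a simple nonelementary \ca{} is not scattered (via type~$\mathrm{I}$ and elementarity); everything thereafter is a direct citation of the already-proven corollaries. As a sanity check, diffuseness can also be argued directly through \autoref{prp:charDiffuseTrace}: the only finite-dimensional quotient of the simple infinite-dimensional algebra $A$ is $\{0\}$, so $\tau$ dominates no nonzero trace factoring through a finite-dimensional quotient, and condition~(2) of \autoref{prp:charDiffuseTrace} holds vacuously. This independent route confirms that no delicate estimate is hidden in the reduction, and that the corollary is essentially the conjunction of \autoref{prp:HaarSimple} and \autoref{prp:traceDiffuseNWS}.
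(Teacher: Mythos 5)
Your proposal is correct and follows essentially the route the paper intends: the corollary is exactly the conjunction of \autoref{prp:HaarSimple} (via \autoref{cor:noScatteredIdealQuotient}, using that a simple algebra has no nonzero proper ideal-quotients and that a simple scattered \ca{} would be elementary) with \autoref{prp:traceDiffuseNWS} to upgrade ``nowhere scattered'' to ``diffuse'' for traces. Your alternative check via condition~(2) of \autoref{prp:charDiffuseTrace} is also valid and matches the equivalent formulation in \autoref{prp:main}.
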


We end this section with an example of a diffuse trace on a unital \ca{} and a masa that contains no Haar unitary.

\begin{exa}
\label{exa:masa}
Let $S\colon\ell^2(\NN)\to\ell^2(\NN)$ be the one-sided shift, and let $\mathcal{T}:=C^*(S)\subseteq\Bdd(\ell^2(\NN))$ be the Toeplitz algebra.
The compact operators $\mathcal{K}:=\mathcal{K}(\ell^2(\NN))$ are an ideal in $\mathcal{T}$ with $\mathcal{T}/\mathcal{K}\cong C(\TT)$, where $\TT\subseteq\CC$ denotes the unit circle.
We consider the masa $\ell^\infty(\NN)\subseteq\Bdd(\ell^2(\NN))$ and set $B:=\ell^\infty(\NN)\cap\mathcal{T}$.
Then $\mathcal{K}\cap B=c_0(\NN)\subseteq\ell^\infty(\NN)$.
Since the commutant of $c_0(\NN)$ in $\Bdd(\ell^2(\NN))$ is $\ell^\infty(\NN)$, we see that $B$ is a masa in $\mathcal{T}$.

We let $\pi\colon\mathcal{T}\to C(\TT)$ denote the quotient map.
Let $\tau\colon C(\TT)\to\CC$ be induced by the normalized Lebesgue measure on $\TT$.
Set $\varphi:=\tau\circ\pi$, which is a diffuse tracial state on $\mathcal{T}$.
Let $(e_n)_{n\in\NN}$ be the standard basis in $\ell^2(\NN)$.
We claim that
\[
\varphi(a) = \lim_{n\to\infty} \langle ae_n,e_n \rangle
\]
for each $a\in\mathcal{T}$.
Indeed, one can directly verify this for each $S^k(S^*)^l$ for $k,l\geq 0$, and since finite linear combinations of such elements are dense in $\mathcal{T}$, the formula holds for every $a\in\mathcal{T}$.
It follows that $B=c_0(\NN)+\CC 1$, and $\pi(B)\subseteq C(\TT)$ contains only the constant functions.

Thus, every unitary $u\in B$ satisfies $|\varphi(u)|=1$.
In particular, $B$ contains no Haar unitary.
To find a Haar unitary for $\varphi$, consider the function $v\colon\TT\to\TT$ satisfying $v(z)=z^2$ for $z$ with positive imaginary part, and satisfying $v(z)=z^{-2}$ for $z$ with negative imaginary part.
Since $v$ is of the form $v=\exp(ia)$ for a positive element $a\in C(\TT)$, we can lift $v$ to a unitary $u\in\mathcal{T}$ with $\pi(u)=v$.
Then $\varphi(u^k)=\tau(v^k)=\int_\TT v(z)^k dz = 0$ for $k\in\ZZ\setminus\{0\}$.
\end{exa}

\section{States admitting Haar unitaries}

Let $\varphi\colon A\to\CC$ be a positive functional on a unital \ca.
In this section, we study when $\varphi$ admits a Haar unitary.
By \autoref{prp:HaarImpliesNoScatteredQuotient}, a necessary condition is that $\varphi$ gives no weight to scattered quotients of $A$.
We conjecture that this is also sufficient:

\begin{cnj}
\label{cnj:Haar}
Let $A$ be a unital \ca, and let $\varphi\colon A\to\CC$ be a positive functional.
Then $\varphi$ admits a Haar unitary if and only if there is no ideal $I\subseteq A$ such that $A/I$ is scattered and $\|\varphi|_I\|<\|\varphi\|$.
\end{cnj}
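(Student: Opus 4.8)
Since the statement is a conjecture, I would expect to settle only one implication outright and to propose a strategy — with an identifiable gap — for the other. The forward implication is immediate from the results already at hand: if $\varphi$ admits a Haar unitary and $I\subseteq A$ is an ideal with $A/I$ scattered, then \autoref{prp:HaarImpliesNoScatteredQuotient} yields $\|\varphi|_I\|=\|\varphi\|$, so no ideal $I$ with $A/I$ scattered and $\|\varphi|_I\|<\|\varphi\|$ can exist.

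For the backward implication — the real content — the plan is to reduce, via \autoref{prp:charSingleHaar}, to producing a unital abelian sub-\ca{} $C(X)\subseteq A$ on which $\varphi$ restricts to a diffuse measure (equivalently, a positive $a\in A$ with spectrum $[0,\|\varphi\|]$ carrying Lebesgue measure). One cannot simply invoke \autoref{prp:NWSGivesPosElement}, because the present hypothesis is strictly weaker than nowhere scatteredness: it constrains only scattered \emph{quotients} of $A$, not scattered ideal-quotients in the interior. Indeed, \autoref{exa:HaarNotImpliesNWS} produces a functional that admits a Haar unitary while failing to be nowhere scattered, so the density property of \autoref{prp:charNWS}(2) can break down at small open projections and the path construction of \autoref{prp:constructionPath} does not apply verbatim.

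The guiding example is precisely the bilateral shift of \autoref{exa:HaarNotImpliesNWS}: even for an atomic $\varphi$, a unitary can carry the GNS cyclic vector through an orthonormal orbit. Concretely, I would pass to the GNS data $(\pi_\varphi,H_\varphi,\xi_\varphi)$ and set $M:=\pi_\varphi(A)''$, so that a Haar unitary for $\varphi$ is exactly an element $u\in A$ whose image $v=\pi_\varphi(u)$ makes the orbit $(v^k\xi_\varphi)_{k\in\ZZ}$ orthonormal. Decomposing $M=M_{\mathrm d}\oplus M_{\mathrm a}$ into its diffuse and atomic summands splits $\xi_\varphi$ accordingly; on $M_{\mathrm d}$ Haar unitaries are plentiful, since every masa of a diffuse von Neumann algebra is diffuse, while on $M_{\mathrm a}\cong\prod_i\Bdd(H_i)$ one would build, factor by factor, a shift-type unitary moving the relevant component of $\xi_\varphi$ along an orthonormal orbit, as the bilateral shift does. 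Here the scattered-quotient hypothesis should translate into the statement that $\varphi$ is not concentrated on a scattered quotient, and hence that each factor seen through $\varphi$ affords enough room (infinite multiplicity) to carry such an orbit; assembling the pieces into a single $v\in M$ is then routine.

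The hard part will be to realize $v$ \emph{inside the \ca{}} $\pi_\varphi(A)$ — equivalently, to lift it to $u\in A$ — rather than merely inside the ambient von Neumann algebra $M$, which contains vastly more unitaries. Kaplansky density yields only weak*-approximations, which need be neither unitary nor moment-preserving, so a genuinely C*-algebraic construction is required: plausibly a transfinite refinement of the open-projection and path machinery of \autoref{sec:OP}, run only along the non-scattered directions that $\varphi$ detects, coupled with a Cantor--Bendixson type analysis of the scattered quotients of $A$ that converts the hypothesis into the concrete assertion that the needed orthonormal orbit lives in $A$ itself. Supplying this last step — not the von Neumann algebra heuristic — is the crux, and is presumably why the statement is recorded only as a conjecture.
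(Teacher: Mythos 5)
The statement you were asked to prove is recorded in the paper as a conjecture, and the paper itself contains no proof of the backward implication; it only establishes the forward implication (exactly your first paragraph, via \autoref{prp:HaarImpliesNoScatteredQuotient}) and confirms the equivalence in special cases: tracial functionals (\autoref{prp:main}), algebras with $T_1$ primitive ideal space (\autoref{prp:charHaarT1}), and von Neumann algebras (\autoref{prp:charFctlVNHaar}). Your treatment of the forward direction coincides with the paper's, and your diagnosis of why \autoref{prp:NWSGivesPosElement} cannot be invoked directly --- the hypothesis controls only scattered \emph{quotients}, not scattered ideal-quotients, so nowhere scatteredness and hence the density property of \autoref{prp:charNWS}(2) may fail --- is exactly right and is illustrated by \autoref{exa:HaarNotImpliesNWS}. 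You correctly identify the lifting problem from $\pi_\varphi(A)''$ to $\pi_\varphi(A)$ as the open crux, so there is no error to report; the ``gap'' is the conjecture itself.

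One comparative remark on your proposed strategy for the converse. In the von Neumann algebra case the paper does \emph{not} build shift-type unitaries in the atomic type~$\mathrm{I}_\infty$ part: in \autoref{prp:charFctlVNHaar} it splits off the finite type~$\mathrm{I}_n$ summands (where \autoref{prp:charHaarT1} applies, since these are liminal) and handles the remaining summand, which has no finite-dimensional representations, by embedding a unital, simple, nonelementary sub-\ca{} (e.g.\ the hyperfinite $\mathrm{II}_1$ factor) and invoking \autoref{prp:HaarSimpleSubalgebra}. This sidesteps both the explicit orthonormal-orbit construction and the lifting problem entirely, because the Haar unitary is found inside a unital sub-\ca{} rather than approximated from the weak closure. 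The reason this does not settle the conjecture is that a general unital \ca{} satisfying the hypothesis need not contain such a sub-\ca{}, nor need its image $\pi_\varphi(A)$ be weak*-closed; so, as you say, a genuinely C*-algebraic refinement of the open-projection machinery would be needed, and that is precisely what is missing from both your proposal and the paper.
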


We confirm the conjecture in the case that $\varphi$ is tracial (\autoref{prp:main}), and if $A$ is a von Neumann algebra (\autoref{prp:charFctlVNHaar}).

Recall that a topological space $X$ is said to be $T_1$ if for every $x\in X$ the set $\{x\}$ is closed.

\begin{lma}
\label{prp:PrimT1}
Let $\varphi\colon A\to\CC$ be a positive functional on a \ca{} $A$ whose primitive ideal space is $T_1$.
Then $\varphi$ is nowhere scattered if and only if $\varphi$ does not dominate a nonzero positive functional that factors through an elementary quotient.
\end{lma}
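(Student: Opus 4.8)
The plan is to prove the two implications separately, each via its contrapositive. The forward implication (nowhere scattered $\Rightarrow$ no domination) does not use the $T_1$ hypothesis and follows quickly from \autoref{prp:dominatedNWS}; the reverse implication is the substantive one and is where the assumption on $\mathrm{Prim}(A)$ enters. For the forward direction, I would first record the general fact that any nonzero functional $\psi$ factoring through a scattered (in particular elementary) quotient $A/I$ fails to be nowhere scattered: taking the ideals $I\subseteq A$, the quotient $A/I$ is scattered while $\|\psi|_I\|=0$ (as $\psi$ vanishes on $I$) and $\|\psi|_A\|=\|\psi\|>0$, so $\psi$ is not nowhere scattered. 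Consequently, if $\varphi$ dominates such a $\psi$ and $\varphi$ were nowhere scattered, then $\psi$ would be nowhere scattered by \autoref{prp:dominatedNWS}, a contradiction. This proves the contrapositive of the forward implication.

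For the reverse direction I would argue the contrapositive: assuming $\varphi$ is not nowhere scattered, I would produce a nonzero $\psi\le\varphi$ factoring through an elementary quotient. By \autoref{prp:firstCharNWS}(3) there is an ideal $I\subseteq A$ and a minimal projection $e\in B:=A/I$ with $\varphi(e)>0$. Viewing $B\subseteq B^{**}=(1-p)A^{**}\subseteq A^{**}$, the projection $e$ is minimal in $A^{**}$, and its central cover yields a type~$\mathrm{I}$ factor $c(e)A^{**}\cong\Bdd(H)$ together with an irreducible representation $\sigma\colon B\to\Bdd(H)$, $b\mapsto c(e)b$, for which $\sigma(e)$ is a rank-one projection. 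Composing with the quotient map $A\to B$ gives an irreducible representation $\rho\colon A\to\Bdd(H)$ whose image $\rho(A)=\sigma(B)$ contains the rank-one projection $\sigma(e)$, and hence all of $\Cpct(H)$ by \cite[Corollary~IV.1.2.5]{Bla06OpAlgs}. Setting $P:=\ker\rho$, this is where $T_1$ is decisive: since $\mathrm{Prim}(A)$ is $T_1$, the primitive ideal $P$ is maximal among primitive ideals, so $A/P$ has a single primitive ideal and is therefore simple; as $A/P\cong\rho(A)$ contains $\Cpct(H)$ as a nonzero ideal, simplicity forces $A/P\cong\Cpct(H)$, an elementary quotient.

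It then remains to manufacture the dominated functional. Let $z\in A^{**}$ be the central cover of $\rho$, so that $zA^{**}\cong\Bdd(H)$ and $\rho$ extends to the cutdown $a\mapsto za$. Minimality of $e$ forces $ze=e$, hence $e\le z$. Because $z$ is a central projection, $za=zaz\le a$ for every $a\in A_+$, so $\psi(a):=\varphi(za)$ defines a positive functional with $\psi\le\varphi$ and $\psi(e)=\varphi(e)>0$, so $\psi\neq 0$. Finally, writing $\psi(a)=(\varphi|_{\Bdd(H)})(\rho(a))$ and noting $\rho(a)\in\rho(A)\cong A/P\cong\Cpct(H)$, we see that $\psi=\psi_0\circ\rho$ for $\psi_0:=\varphi|_{\Cpct(H)}$, so $\psi$ factors through the elementary quotient $A/P$. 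This completes the contrapositive, and hence the reverse implication.

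I expect the main obstacle to be the reverse direction, and within it the step that upgrades the merely local information ``$\varphi$ weights a minimal projection in some subquotient $A/I$'' into the global information ``$\varphi$ dominates a functional factoring through an honest elementary \emph{quotient} of $A$''. The gap between elementary ideal-quotients and genuine quotients is precisely what the $T_1$ hypothesis repairs, via the simplicity of $A/P$; without it the conclusion fails, as the shift computation in \autoref{exa:HaarNotImpliesNWS} illustrates. A secondary point requiring care is the domination $\psi\le\varphi$: it is essential that $z$ is the central cover of $\rho$ (so that the compression $a\mapsto za$ is dominated by the identity on positive elements), rather than, say, the minimal projection $e$ itself, for which the analogous compression $a\mapsto\varphi(eae)$ need not be dominated by $\varphi$.
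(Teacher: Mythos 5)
Your proof is correct and follows essentially the same route as the paper: both directions reduce to \autoref{prp:firstCharNWS}, and the $T_1$ hypothesis is used in exactly the same way, namely to upgrade an elementary ideal-quotient (equivalently, a minimal projection in some quotient $A/I$ receiving weight) to a genuine elementary quotient $A/P$ by observing that the relevant primitive ideal is maximal, so that $A/P$ is simple. The paper's own proof is a three-line sketch of this argument; your version supplies the details, including the explicit dominated functional $\psi=\varphi(z\,\cdot\,)$, and all of the steps check out.
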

\begin{proof}
The forward implication is clear.
To show the converse, assume that $\varphi$ gives no weight to elementary quotients.
Using that the primitive ideal space is $T_1$, it follows that $\varphi$ gives no weight to elementary ideal-quotients.
By \autoref{prp:firstCharNWS}, this implies that $\varphi$ is nowhere scattered.
\end{proof}

\begin{prp}
\label{prp:charHaarT1}
Let $A$ be a unital \ca{} whose primitive ideal space is $T_1$, and let $\varphi\colon A\to\CC$ be a positive functional.
Then the following are equivalent:
\begin{enumerate}
\item
$\varphi$ is nowhere scattered;
\item
$\varphi$ admits a Haar unitary;
\item
$\varphi$ does not dominate a nonzero positive functional that factors through a finite-dimensional quotient.
\end{enumerate}
If $A$ is also of type~$\mathrm{I}$, then these conditions are also equivalent to:
\begin{enumerate}
\setcounter{enumi}{3}
\item
$\varphi$ is diffuse.
\end{enumerate}
\end{prp}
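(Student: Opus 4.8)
The plan is to prove the cyclic chain (1) $\Rightarrow$ (2) $\Rightarrow$ (3) $\Rightarrow$ (1), and then handle the additional equivalence with~(4) under the type~I hypothesis. The implication (1) $\Rightarrow$ (2) I would read off directly from \autoref{prp:charNWS}: since $\varphi$ is nowhere scattered, condition~(4) of that theorem holds, so every hereditary sub-\ca{} $B\subseteq A$ admits a Haar unitary in $\widetilde{B}$. Applying this to $B=A$, whose associated open projection is the unit $1\in A^{**}$, we have $\widetilde{B}=A$, so $A$ itself contains a Haar unitary with respect to $\varphi$.

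For (2) $\Rightarrow$ (3) I would argue by contradiction. Suppose $\varphi$ admits a Haar unitary but nevertheless dominates a nonzero positive functional $\psi=\bar\psi\circ\pi$ factoring through a quotient map $\pi\colon A\to A/I$ with $A/I$ finite-dimensional. A finite-dimensional \ca{} is scattered, so \autoref{prp:HaarImpliesNoScatteredQuotient} gives $\|\varphi|_I\|=\|\varphi\|$. Letting $z\in A^{**}$ be the central open projection corresponding to $I$, this reads $\varphi(z)=\varphi(1)$, i.e.\ $\varphi(1-z)=0$. Since $\psi$ vanishes on $I$ and extends to a normal functional on $A^{**}$, we have $\psi(z)=0$; and since $\psi\le\varphi$, their normal extensions satisfy $\psi(1-z)\le\varphi(1-z)=0$. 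Hence $\psi(1)=\psi(z)+\psi(1-z)=0$, forcing $\psi=0$, which contradicts the choice of $\psi$.

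For (3) $\Rightarrow$ (1) I would pass to the contrapositive and invoke \autoref{prp:PrimT1}. If $\varphi$ is not nowhere scattered, then, as $\mathrm{Prim}(A)$ is $T_1$, that lemma produces a nonzero positive functional dominated by $\varphi$ that factors through an elementary quotient $A/I\cong\Cpct(H)$. The unitality of $A$ is the decisive point here: the quotient $A/I$ is unital, while $\Cpct(H)$ is unital precisely when $H$ is finite-dimensional, so in fact $A/I\cong M_n(\CC)$ is finite-dimensional. Thus $\varphi$ dominates a nonzero functional factoring through a finite-dimensional quotient, negating~(3). I expect this step — that unitality forces every elementary quotient to be a full matrix algebra — to be the one genuinely nonformal point, since it is precisely what reconciles the formulation of \autoref{prp:PrimT1} with the matrix-algebra wording of condition~(3).

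Finally, assuming in addition that $A$ is of type~I, I would establish (1) $\Leftrightarrow$ (4). The implication (4) $\Rightarrow$ (1) is \autoref{prp:diffuseImpliesNWS}. For (1) $\Rightarrow$ (4), I would note that nowhere-scatteredness and diffuseness are both invariant under scaling by a positive scalar; hence for $\varphi\neq 0$ the normalization $\varphi/\|\varphi\|$ is a nowhere scattered \emph{state}, which is diffuse by \autoref{prp:typeI}, so $\varphi$ is diffuse as well (the case $\varphi=0$ being trivial).
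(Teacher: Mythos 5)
Your proposal is correct and follows essentially the same route as the paper: (1)$\Rightarrow$(2) via \autoref{prp:charNWS} applied to $B=A$, (2)$\Rightarrow$(3) via \autoref{prp:HaarImpliesNoScatteredQuotient} (you simply spell out the open-projection bookkeeping the paper leaves implicit), (3)$\Rightarrow$(1) via \autoref{prp:PrimT1} together with the observation that a unital elementary quotient is a matrix algebra, and (1)$\Leftrightarrow$(4) via \autoref{prp:typeI} and \autoref{prp:diffuseImpliesNWS}. Your explicit normalization of $\varphi$ to a state in the last step is a small but welcome precision over the paper's one-line citation.
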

\begin{proof}
By \autoref{prp:charNWS}, (1) implies~(2).
By \autoref{prp:HaarImpliesNoScatteredQuotient}, (2) implies~(3).
Since every unital, elementary \ca{} is a matrix algebra and therefore finite-di\-men\-sion\-al, it follows from \autoref{prp:PrimT1} that~(3) implies~(1).
If $A$ is also of type~$\mathrm{I}$, then the equivalence of~(1) and~(4) follows from \autoref{prp:typeI}.
\end{proof}

\begin{exa}
\label{exa:liminal}
Recall that a \ca{} $A$ is \emph{liminal} (also called \emph{CCR}) if for every irreducible representation $\pi\colon A\to\Bdd(H)$ we have $\pi(A)=K(H)$.
Every liminal \ca{} is type~$\mathrm{I}$ and its primitive ideal space is $T_1$.
A unital \ca{} is liminal if and only if every of its irreducible representations is finite-dimensional.
\autoref{prp:charHaarT1} verifies \autoref{cnj:Haar} for liminal \ca{s}.

Recall that a \ca{} $A$ is \emph{subhomogeneous} if there exists $n\in\NN$ such that every irreducible representation of $A$ is at most $n$-dimensional.
Every subhomogeneous \ca{} is liminal.
We obtain in particular that a positive functional on a unital, subhomogeneous \ca{} admits a Haar unitary if and only if it does not dominate a nonzero positive functional that factors through a finite-dimensional quotient.
\end{exa}

\section{States on von Neumann algberas}

In this section, we study when (normal) states on von Neumann algebras admit Haar unitaries.
We first show that a normal state is diffuse if and only if it is nowhere scattered.
We deduce that normal states on diffuse von Neumann algebras admit Haar unitaries.
However, it turns out that diffuseness is not necessary.
Indeed, the main result of this section, \autoref{prp:charVNHaar}, shows that every state on a von Neumann algebra without finite-dimensional representations admits a Haar unitary.
In particular, every state on $\Bdd(H)$ admits a Haar unitary;
see \autoref{rmk:stateBH}.

\begin{lma}
\label{prp:vNdiffuse}
Let $M$ be a von Neumann algebra, and let $\varphi\colon M\to\CC$ be a \emph{normal}, positive functional.
Then the following are equivalent:
\begin{enumerate}
\item
$\varphi$ is diffuse (in the sense of \autoref{pgr:diffuse});
\item
$\varphi(e)=0$ for every minimal projection $e\in M$;
\item
$\varphi$ is nowhere scattered.
\end{enumerate}
\end{lma}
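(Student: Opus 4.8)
The plan is to prove the cycle of implications $(1)\Rightarrow(3)\Rightarrow(2)\Rightarrow(1)$, since the only substantial step will be the last one. The implication $(1)\Rightarrow(3)$ is already available: it is the content of \autoref{prp:diffuseImpliesNWS}, valid for arbitrary positive functionals. For $(3)\Rightarrow(2)$ I would simply invoke the characterization of nowhere scattered functionals through minimal projections in quotients, \autoref{prp:firstCharNWS}(3), applied to the zero ideal $I=0$: this yields $\varphi(e)=0$ for every minimal projection $e\in M=M/0$, which is exactly condition~(2). Note that neither of these two steps uses normality of $\varphi$ or that $M$ is a von Neumann algebra.

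The whole point of the lemma is therefore the implication $(2)\Rightarrow(1)$, where both hypotheses genuinely enter. The difficulty is that diffuseness concerns minimal projections in the \emph{enveloping} von Neumann algebra $M^{**}$, whereas $(2)$ only controls the minimal projections of $M$ itself, and a minimal projection of $M$ need not remain minimal in $M^{**}$. To bridge this gap I would use the standard structure of the universal enveloping von Neumann algebra of $M$: there is a central projection $z\in M^{**}$ such that $x\mapsto zx$ restricts to an isomorphism $zM^{**}\cong M$, and a positive functional on $M$ is normal precisely when its normal extension is supported under $z$, that is, $\varphi(1-z)=0$ (see, e.g., the discussion of the universal enveloping von Neumann algebra in \cite{Bla06OpAlgs}). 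Since $z$ is central, every minimal projection of $M^{**}$ lies either under $z$ or under $1-z$; cutting by $z$ therefore identifies the minimal projections of $M^{**}$ lying under $z$ with the minimal projections of $zM^{**}\cong M$, hence with the minimal projections of $M$.

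With $z_{\mathrm{at}}\in M^{**}$ the supremum of all minimal projections of $M^{**}$ (as in \autoref{pgr:diffuse}) and $z_{\mathrm{at}}^M\in M$ the supremum of all minimal projections of $M$, the observation above gives $zz_{\mathrm{at}}=\sup\{f\in\Proj(M^{**}): f\leq z,\ f\text{ minimal}\}$, which under $zM^{**}\cong M$ is precisely (the image of) $z_{\mathrm{at}}^M$. Using normality of $\varphi$, so that $\varphi=\varphi(z\,\cdot\,)$, I would then compute $\varphi(z_{\mathrm{at}})=\varphi(zz_{\mathrm{at}})=\varphi(z_{\mathrm{at}}^M)$. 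Finally, condition~$(2)$ says that $\varphi$ vanishes on every minimal projection of $M$; since $z_{\mathrm{at}}^M$ is the supremum of an orthogonal family of such projections and $\varphi$ is a normal positive functional, positivity forces $\varphi(z_{\mathrm{at}}^M)=0$, whence $\varphi(z_{\mathrm{at}})=0$, i.e. $\varphi$ is diffuse. The main obstacle, as indicated, is exactly this bookkeeping relating the minimal projections of $M$ to those of $M^{**}$ via the central projection $z$; once that is in place, everything reduces to results already established in the excerpt together with the standard description of normal functionals on $M^{**}$.
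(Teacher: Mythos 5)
Your proposal is correct and follows essentially the same route as the paper: the implications $(1)\Rightarrow(3)\Rightarrow(2)$ are quoted from \autoref{prp:diffuseImpliesNWS} and \autoref{prp:firstCharNWS} exactly as in the paper, and for $(2)\Rightarrow(1)$ the paper also exploits the canonical normal surjection $M^{**}\to M$ (realized there as $\kappa_{M_*}^*$, which is precisely ``cut by the central projection $z$ and identify $zM^{**}\cong M$'') together with the fact that it sends minimal projections of $M^{**}$ to minimal or zero projections of $M$. The only cosmetic difference is that the paper argues projection-by-projection, computing $\varphi(e)=\varphi(\pi(e))$ for each minimal $e\in M^{**}$, whereas you aggregate into the single identity $\varphi(z_{\mathrm{at}})=\varphi(z_{\mathrm{at}}^M)$; both hinge on the same two facts.
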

\begin{proof}
By \autoref{prp:diffuseImpliesNWS}, (1) implies~(3).
By \autoref{prp:firstCharNWS}, (3) implies~(2).
To show that~(2) implies~(1), assume that~$\varphi$ vanishes on every minimal projection in $M$, and let $e$ be a minimal projection in $M^{**}$.
Given a Banach space $E$, we use $\kappa_E\colon E\to E^{**}$ to denote the natural inclusion.
Let $M_*$ denote the predual of $M$ and set
\[
\pi := \kappa_{M_*}^* \colon M^{**}\cong (M_*)^{***} \to (M_*)^* \cong M.
\]
Then $\pi$ is a ${}^*$-homomorphism satisfying $\pi\circ\kappa_M=\mathrm{id}_M$.

Set $\bar{e}:=\pi(e)$.
Then $\bar{e}$ is a projection in $M$.
To see that it is minimal, let $x\in M$.
Since $e$ is minimal in $M^{**}$, there exists $\lambda\in\CC$ such that $exe=\lambda e$.
Then
\[
\bar{e}x\bar{e} 
= \pi(e)\pi(x)\pi(e)
= \pi(exe)
= \pi(\lambda e)
= \lambda \bar{e}.
\]

Thus, either $\bar{e}$ is zero, or $\bar{e}$ is a minimal projection in $M$.
In either case, we have $\varphi(\bar{e})=0$.
Using that $\varphi$ belongs to $M_*$, we get
\[
\varphi(e)
= \langle \kappa_{M_*}(\varphi), e \rangle_{M_*^{**},M^{**}}
= \langle \varphi, \kappa_{M_*}^*(e) \rangle_{M_*,M}
= \varphi(\bar{e})
= 0. \qedhere
\]
\end{proof}

Recall that a von Neumann algbera is said to be \emph{diffuse} if it contains no minimal projections.

\begin{prp}
\label{prp:charDiffuseVN}
A von Neumann algebra $M$ is diffuse if and only if every normal state on $M$ is diffuse.
\end{prp}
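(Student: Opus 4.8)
The plan is to deduce both directions from \autoref{prp:vNdiffuse}, which for a normal positive functional identifies diffuseness with the vanishing condition $\varphi(e)=0$ on the minimal projections of $M$ \emph{itself} (as opposed to those of $M^{**}$). Once this reduction is in place, the equivalence becomes almost formal, since the defining property of a diffuse von Neumann algebra is precisely the absence of minimal projections in $M$.

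For the forward implication, I would assume $M$ is diffuse. Then $M$ contains no minimal projection, so condition~(2) of \autoref{prp:vNdiffuse} is satisfied vacuously by every normal functional. In particular, every normal state $\varphi$ on $M$ satisfies $\varphi(e)=0$ for all minimal projections $e\in M$, and hence is diffuse by the equivalence of~(1) and~(2) in \autoref{prp:vNdiffuse}.

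For the converse, I would argue by contraposition: assume $M$ is not diffuse and produce a normal state that is not diffuse. By assumption there exists a minimal projection $e\in M$. The goal is then to exhibit a normal state $\varphi$ with $\varphi(e)\neq 0$, for then \autoref{prp:vNdiffuse} shows $\varphi$ is not diffuse. Concretely, one may fix a normal faithful representation $M\subseteq\Bdd(H)$, choose a unit vector $\xi\in eH$, and take the associated vector state $\varphi(x)=\langle x\xi,\xi\rangle$; this is a normal state with $\varphi(e)=\langle e\xi,\xi\rangle=1$. Alternatively, and without choosing a representation, minimality of $e$ means $exe\in\CC e$ for all $x\in M$, so setting $\varphi(x)e:=exe$ defines a positive linear functional with $\varphi(1)=1$ and $\varphi(e)=1$; its normality follows from the separate weak*-continuity of multiplication by $e$ on bounded sets together with boundedness of $\varphi$.

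I do not anticipate any genuine obstacle here: the entire content is packaged in \autoref{prp:vNdiffuse}, and the only point requiring a line of verification is that the functional produced in the converse is indeed a \emph{normal} state. Using the vector-state construction makes normality immediate, so I would favor that route, mentioning the corner-map description $exe=\varphi(x)e$ only as a representation-free alternative.
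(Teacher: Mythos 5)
Your proposal is correct and follows essentially the same route as the paper: both directions are reduced to \autoref{prp:vNdiffuse}, and the converse is handled by exhibiting a normal (vector) state that does not vanish on a given minimal projection. The paper packages this slightly differently by passing to the type~$\mathrm{I}$ factor summand $c(e)M\cong\Bdd(H)$ and composing a vector state with the cut-down by $c(e)$, but this is only a cosmetic variation on your construction.
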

\begin{proof}
The forward implication follows from \autoref{prp:vNdiffuse}.
To show the converse, assume that $M$ is not diffuse.
Choose a minimal projection $e$ in $M$, and let $c(e)$ be its central cover.
Then $c(e)M$ is a type~$\mathrm{I}$~factor summand. 
Let $H$ be a Hilbert space such that $c(e)M\cong\Bdd(H)$, and choose a unit vector $\xi\in e(H)$.
The corresponding vector state $\varphi_0\colon\Bdd(H)\to\CC$, $a\mapsto\langle a\xi,\xi\rangle$ is normal and satisfies $\varphi_0(e)=1$.
Then $M\to\CC$, $a\mapsto\varphi_0(c(e)a)$, is a normal state that is not diffuse.
\end{proof}

Let $M$ be a diffuse von Neumann algebra, and let $\varphi\colon M\to\CC$ be a normal state.
It follows from \autoref{prp:vNdiffuse} and \autoref{prp:charDiffuseVN} that $\varphi$ is nowhere scattered and therefore admits a Haar unitary by \autoref{prp:charNWS}.
This is well-known and follows for instance using that every maximal abelian sub-\ca{} (masa) $D\subseteq M$ is a diffuse, abelian von Neumann algebra, that $\varphi|_D$ is a normal trace, and that every normal trace on a diffuse, abelian von Neumann algebra admits a Haar unitary.

Thus, every masa $D\subseteq M$ contains a Haar unitary with respect to $\varphi$.
(For \ca{s}, this does not hold; see \autoref{exa:masa}.)
We note that this only holds for \emph{normal} states.
Indeed, given a masa $D\subseteq M$, we may choose a pure state on $D$ and extend it to a state $\psi$ on $M$.
Then $D$ contains no Haar unitary with respect to~$\psi$.
Nevertheless, in many cases we can find a different masa that contains a Haar unitary for $\psi$.
Indeed, in \autoref{prp:charVNHaar} we will show that this is always possible if~$M$ has no finite-dimensional representations.

\autoref{exa:HaarNotImpliesNWS} shows that a normal state on a von Neumann algebra may admit a Haar unitary without being diffuse.

\begin{prp}
\label{prp:charFctlVNHaar}
Let $M$ be a von Neumann algebra, and let $\varphi\colon M\to\CC$ be a positive functional.
Then the following are equivalent:
\begin{enumerate}
\item
$\varphi$ admits a Haar unitary;
\item
$\varphi$ does not dominate a nonzero positive functional that factors through a finite-dimensional quotient of $M$.
\end{enumerate}
\end{prp}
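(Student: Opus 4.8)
The plan is to prove the equivalence by showing that failure of a Haar unitary corresponds exactly to the presence of a finite-dimensional quotient carrying part of the weight of $\varphi$. The forward implication (1)$\Rightarrow$(2) is essentially already available: a finite-dimensional quotient is scattered, so if $\varphi$ dominated a nonzero positive functional factoring through such a quotient $A/I$, then we would have $\|\varphi|_I\| < \|\varphi\|$, and \autoref{prp:HaarImpliesNoScatteredQuotient} would forbid a Haar unitary. So the real work lies in the converse.

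For (2)$\Rightarrow$(1), the strategy is to reduce to the nowhere scattered case, where \autoref{prp:charNWS} already produces a Haar unitary, and to handle the remaining ``atomic, finite-rank'' part by hand. Concretely, I would first decompose $M$ using the supremum $z_{\mathrm{at}}$ of its minimal projections, or more usefully split off the type $\mathrm{I}$ finite part. The key structural fact special to von Neumann algebras is that $M$ decomposes as a direct sum of its type $\mathrm{I}_n$ summands (for finite $n$), a ``type $\mathrm{I}_\infty$ and properly infinite'' part, and a type $\mathrm{II}_1$/$\mathrm{II}_\infty$/$\mathrm{III}$ part. The finite-dimensional quotients of $M$ come precisely from the type $\mathrm{I}_n$ summands with $n$ finite (where the center contributes minimal central projections). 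The plan is to show: if $\varphi$ gives no weight to any finite-dimensional quotient, then the portion of $\varphi$ supported on each ``bad'' summand is controlled, and on the ``good'' part $\varphi$ is nowhere scattered in the sense of \autoref{prp:charNWS}, hence admits a Haar unitary there.

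More precisely, I would argue that condition (2) forces $\varphi$ to be nowhere scattered \emph{after} quotienting out, or to be supported where infinite-dimensionality is available. For each minimal central projection $z$ with $zM \cong M_n(\CC) \otimes (\text{abelian})$ finite-dimensional as a quotient, condition (2) says $\varphi$ restricted there dominates no factor-through functional, which via \autoref{prp:charDiffuseTrace}-type reasoning (adapted to the normal setting using \autoref{prp:vNdiffuse}) means $\varphi$ vanishes on the relevant minimal projections. On every summand that is infinite-dimensional as a von Neumann algebra, I can invoke \autoref{prp:charNWS} or the observation that such a summand contains a copy of $C(\TT)$ unitally, producing the Haar unitary; the global Haar unitary is then assembled as a direct sum (or direct integral) of the pieces, using that $\varphi(u^k) = 0$ summand-by-summand gives $\varphi(u^k) = 0$ overall.

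\textbf{The main obstacle} I anticipate is handling the properly infinite type $\mathrm{I}$ part (e.g.\ $\Bdd(H)$ itself), where $\varphi$ may be a vector state and hence atomic, yet a Haar unitary still exists, as \autoref{exa:HaarNotImpliesNWS} shows. Here nowhere scatteredness fails, so \autoref{prp:charNWS} does not apply directly, and I must instead exhibit a Haar unitary explicitly. The idea is that a properly infinite von Neumann algebra contains a bilateral-shift-like unitary $u$ (coming from a two-sided sequence of orthogonal equivalent projections summing appropriately) with $\varphi(u^k) = 0$ for all $k \neq 0$; the delicate point is arranging orthogonality of the ``shifted'' pieces relative to $\varphi$ simultaneously for all powers $k$, which is exactly where the absence of a finite-dimensional quotient is used to guarantee enough room (infinitely many mutually orthogonal equivalent subprojections) inside the support of $\varphi$.
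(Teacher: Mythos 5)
Your direction (1)$\Rightarrow$(2) and your overall decomposition into the finite type $\mathrm{I}_n$ summands plus a remainder match the paper, and the finite type $\mathrm{I}_n$ summands can indeed be handled by the subhomogeneous case (\autoref{prp:charHaarT1}) and assembled as a direct sum. Note, however, that the tools you cite for that step, \autoref{prp:charDiffuseTrace} and \autoref{prp:vNdiffuse}, do not apply as stated, since $\varphi$ is neither assumed tracial nor normal; the correct reduction is via the $T_1$ primitive ideal space of a liminal algebra. The genuine gap is in the piece you yourself flag as the main obstacle: the summand $z_\infty M$ with no finite type $\mathrm{I}$ part. A bilateral-shift unitary built from a two-sided family of orthogonal, equivalent projections summing to $1$ is \emph{not} a Haar unitary for an arbitrary positive functional satisfying~(2). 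For example, every state on $\Bdd(H)$ satisfies~(2), but a state factoring through the Calkin algebra can restrict to a point mass on the copy of $C(\TT)$ generated by the image of the bilateral shift, giving $\varphi(u^k)=\lambda^k\neq 0$ for all $k$. Having ``enough room'' (infinitely many orthogonal equivalent subprojections) therefore does not arrange $\varphi(u^k)=0$; one would have to choose the shift adapted to $\varphi$, which is the original problem. The same objection applies to your remark that a unital copy of $C(\TT)$ in a summand ``produces the Haar unitary'': it does so only when $\varphi$ happens to induce a diffuse measure on it.

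The paper closes this gap differently: since $z_\infty M$ has no finite-dimensional representations, it contains a \emph{unital, simple, nonelementary} sub-\ca{} (for instance the hyperfinite $\mathrm{II}_1$ factor), and by \autoref{prp:HaarSimpleSubalgebra} every positive functional on an algebra containing such a subalgebra admits a Haar unitary. The point is that the restriction of $\varphi$ to the simple subalgebra is automatically nowhere scattered, so \autoref{prp:charNWS} applies \emph{there}, even though $\varphi$ itself need not be nowhere scattered on $z_\infty M$ (cf.\ \autoref{exa:HaarNotImpliesNWS}). This is the step missing from your proposal.
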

\begin{proof}
By \autoref{prp:HaarImpliesNoScatteredQuotient}, (1) implies~(2).
To show the converse, assume that $\varphi$ gives no weight to finite-dimensional quotients of $M$.
For each $n\geq 1$, let $z_n$ be the central projection in $M$ such that $z_nM$ is the type~$\mathrm{I}_n$ summand of $M$.
It follows from the assumption that the restriction of $\varphi$ to $z_nM$ gives no weight to finite-dimensional quotients.
Since every irreducible representation of $z_nM$ is $n$-dimensional, we can apply \autoref{prp:charHaarT1} (see also \autoref{exa:liminal}) to obtain a Haar unitary $u_n\in z_nM$.

Set $z_{<\infty}:=\sum_{n=1}^\infty z_n$ and $z_{\infty}:=1-\sum_{n=1}^\infty z_n$.
Set $u:=\sum_{n=1}^\infty u_n \in z_{<\infty}M$.
Under the identification of $z_{<\infty}M$ with $\prod_{n=1}^\infty z_nM$, the unitary $u$ corresponds to $(u_n)_{n=1}^\infty$.
It follows that $u$ is a Haar unitary in $z_{<\infty}M$.

If $z_{\infty}=0$, then $u$ is the desired Haar unitary. 
So assume that $z_{\infty}\neq 0$.
Then $z_{\infty}M$ admits no finite-dimensional representations.
It follows that $z_{\infty}M$ contains a unital, simple, nonelementary sub-\ca{}, for example the hyperfinite $\mathrm{II}_1$~factor~$\mathcal{R}$ (we sketch the argument below).
Hence, the restriction of $\varphi$ to $z_{\infty}M$ admits a Haar unitary $v$;
see \autoref{prp:HaarSimpleSubalgebra}.
(Since von Neumann algebras have real rank zero, this also follows from \autoref{exa:rr0}.)
Now $u+v$ is the desired Haar unitary.

To complete the argument, let us show that a von Neumann algebra $N$ admits a unital embedding of $\mathcal{R}$ if and only if $N$ admits no finite-dimensional representations.
The forward implication is clear.
For the backward implication, we can use type decomposition to reduce to the cases that $N$ is properly infinite or type~$\mathrm{II}_1$.

If $N$ is properly infinite, then it follows from Propositions~V.1.22 and~V.1.36 in \cite{Tak02ThyOpAlgs1} that $N \cong N \bar{\otimes} \Bdd(\ell^2(\NN))$.
Using that $\mathcal{R}$ unitally embeds into $\Bdd(\ell^2(\NN))$, we get a unital embedding $\mathcal{R} \subseteq N$.

If $N$ is type~$\mathrm{II}_1$, then one can apply \cite[Proposition~1.35]{Tak02ThyOpAlgs1} to construct an increasing sequence $(A_n)_n$ of unital subalgebras of $N$ with $A_n \cong M_{2^n}(\CC)$.
Consider $N_0 := (\bigcup_n A_n)'' \subseteq N$.
We use the center valued trace $T\colon N \to Z(N)$;
see \cite[p.312ff]{Tak02ThyOpAlgs1} for details.
Since $T$ is normal and maps each $A_n$ into scalar multiples of the unit, it follows that $T$ maps $N_0$ into $\CC\subseteq N$ as well.
Thus, $N_0$ admits a faithful, normal, tracial state, and we see that this is also the unique normal tracial state on~$N_0$ using that each $A_n$ has a unique tracial state and that $\bigcup_n A_n$ is $\sigma$-weakly dense in $N_0$.
This shows that $N_0$ is a $\mathrm{II}_1$ factor.
Since $N_0$ is AFD, we have $N_0 \cong \mathcal{R}$ by Murray-von Neumann's uniqueness of the separable, AFD $\mathrm{II}_1$ factor $\mathcal{R}$.
\end{proof}

\begin{thm}
\label{prp:charVNHaar}
Let $M$ be a von Neumann algebra.
Then the following are equivalent:
\begin{enumerate}
\item
$M$ has no finite-dimensional representations;
\item
every state on $M$ admits a Haar unitary;
\item
every tracial state on $M$ admits a Haar unitary.
\end{enumerate}
\end{thm}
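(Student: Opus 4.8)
The plan is to close the loop $(1)\Rightarrow(2)\Rightarrow(3)\Rightarrow(1)$, relying on the functional-analytic work already carried out in \autoref{prp:charFctlVNHaar} and \autoref{prp:charNoFDRep}; the genuinely new content is essentially a dictionary between finite-dimensional representations and finite-dimensional quotients.

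First I would record the elementary observation that, for a unital \ca{} (in particular for $M$), having no finite-dimensional representations is equivalent to having no nonzero finite-dimensional quotients. Indeed, a nonzero finite-dimensional quotient $M/I$ is a finite direct sum of matrix algebras, and composing the quotient map $M\to M/I$ with the projection onto any matrix summand yields a (surjective) finite-dimensional representation of $M$; conversely, the image of any nonzero finite-dimensional representation is itself a finite-dimensional quotient of $M$.

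For $(1)\Rightarrow(2)$, I would argue as follows. Assume $M$ has no finite-dimensional representations, so by the previous step $M$ has no nonzero finite-dimensional quotients. Then no nonzero positive functional on $M$ can factor through a finite-dimensional quotient, and in particular no state on $M$ can dominate such a functional. By the characterization in \autoref{prp:charFctlVNHaar}, every state on $M$ therefore admits a Haar unitary, which is exactly~(2).

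The implication $(2)\Rightarrow(3)$ is immediate, since every tracial state is a state. For $(3)\Rightarrow(1)$ I would simply invoke \autoref{prp:charNoFDRep}, applied to $M$ viewed as a unital \ca{}: if every tracial state on $M$ admits a Haar unitary, then $M$ has no finite-dimensional representations. This completes the cycle and establishes the equivalence of all three conditions. I do not expect a serious obstacle here, since the delicate construction of Haar unitaries is entirely encapsulated in the earlier results; the only point requiring care is the representation-quotient dictionary together with the correct appeal to \autoref{prp:charFctlVNHaar}, whose hypothesis concerns \emph{domination} of functionals factoring through finite-dimensional quotients rather than factorization of $\varphi$ itself.
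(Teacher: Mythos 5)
Your proposal is correct and follows essentially the same route as the paper: the cycle $(1)\Rightarrow(2)\Rightarrow(3)\Rightarrow(1)$, with $(1)\Rightarrow(2)$ obtained from \autoref{prp:charFctlVNHaar} via the (correct) observation that no finite-dimensional representations means no nonzero finite-dimensional quotients, and $(3)\Rightarrow(1)$ from \autoref{prp:charNoFDRep}. No gaps.
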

\begin{proof}
It is clear that~(2) implies~(3), and that (3) implies~(1).
By \autoref{prp:charFctlVNHaar}, (1) implies~(2).
\end{proof}

\begin{rmk}
\label{rmk:stateBH}
Let $H$ be a separable, infinite-dimensional Hilbert space.
By \autoref{prp:charVNHaar}, every state on $\Bdd(H)$ admits a Haar unitary and consequently restricts to a diffuse state on some masa.
This should be contrasted with the result of Akemann and Weaver, \cite{AkeWea08PureStateNotMultAnyMasa}, that the continuum hypothesis implies the existence of a pure state on $\Bdd(H)$ that does not restrict to a pure state on any masa.
\end{rmk}

\section{Traces on reduced group C*-algebras}
\label{sec:groups}

Let $G$ be a discrete group, and let $\ell^2(G)$ be the associated Hilbert space with canonical orthonormal basis $(\delta_g)_{g\in G}$.
The \emph{left-regular representation} $\lambda_G$ is the representation of $G$ on $\ell^2(G)$ that maps $g\in G$ to the unitary $u_g\in\Bdd(\ell^2(G))$ satisfying $u_g\delta_h:=\delta_{gh}$ for $h\in G$.
The sub-\ca{} of $\Bdd(\ell^2(G))$ generated by $\{u_g:g\in G\}$ is called the \emph{reduced group \ca{}} of $G$, denoted by $C^*_\red(G)$. 
The vector $\delta_1\in\ell^2(G)$ induces a canonical tracial state $\tau_G\colon C^*_\red(G)\to\CC$ given by
\[
\tau_G(a) := \langle a\delta_1,\delta_1 \rangle
\]
for $a\in C^*_\red(G)$.
We have $\tau_G(u_1)=1$ and $\tau_G(u_g)=0$ for $g\in G\setminus\{1\}$.
It follows that $u_g$ is a Haar unitary with respect to $\tau_G$ if and only if $g$ has infinite order in $G$.

The von Neumann algebra generated by $\{u_g:g\in G\}$ is called the \emph{group von Neumann algebra} of $G$, denoted $L(G)$.

\begin{prp}
\label{prp:charGpInfinite}
Let $G$ be a discrete group.
Then the following are equivalent:
\begin{enumerate}
\item
$G$ is infinite;
\item
the trace $\tau_G\colon C^*_\red(G)\to\CC$ is diffuse;
\item
the trace $\tau_G\colon C^*_\red(G)\to\CC$ admits a Haar unitary;
\item
$L(G)$ is diffuse.
\end{enumerate}
\end{prp}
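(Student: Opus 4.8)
The plan is to separate the four conditions into soft equivalences, which follow directly from the machinery already developed, and a single hard equivalence $(1)\Leftrightarrow(4)$ that encodes the actual input about groups. For the soft part, I would first observe that $\tau_G$ is the vector state of the cyclic and separating vector $\delta_1$, hence faithful, and that its GNS representation is unitarily equivalent to the left regular representation, so that $\pi_{\tau_G}(C^*_\red(G))''=L(G)$. Then \autoref{prp:diffuseGNS} yields $(2)\Leftrightarrow(4)$ immediately: $\tau_G$ is diffuse exactly when $L(G)$ is a diffuse von Neumann algebra. The equivalence $(2)\Leftrightarrow(3)$ is nothing but \autoref{prp:main} applied to the unital \ca{} $C^*_\red(G)$ and its tracial state $\tau_G$. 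It therefore remains to relate finiteness of $G$ to one of the conditions, and I would route this through $(4)$.

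The implication $(4)\Rightarrow(1)$ is the easy half: in contrapositive form, if $G$ is finite then $C^*_\red(G)=L(G)=\CC[G]$ is finite-dimensional, hence a direct sum of matrix algebras, which has minimal projections and so is not diffuse. The substance lies in $(1)\Rightarrow(4)$, that $L(G)$ is diffuse once $G$ is infinite, which I would establish contrapositively by showing that a minimal projection in $L(G)$ forces $G$ to be finite. So suppose $e\in L(G)$ is minimal and let $z:=c(e)$ be its central support. Since $e$ is minimal, the only central projections below $z$ are $0$ and $z$, so $zL(G)$ is a factor; it contains the minimal projection $e$ and is a central summand of the \emph{finite} von Neumann algebra $L(G)$, hence a finite type $\mathrm I$ factor, i.e.\ $zL(G)\cong M_n(\CC)$ with $n<\infty$. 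Writing $R(G):=L(G)'$ for the right group von Neumann algebra, centrality gives $z\in L(G)\cap R(G)=Z(L(G))$, and computing the commutant of $zL(G)$ on the reducing subspace $z\ell^2(G)$ produces $zR(G)$, again a finite type $\mathrm I$ factor $\cong M_m(\CC)$ with $m<\infty$. Consequently $z\ell^2(G)\cong\CC^n\otimes\CC^m$ is finite-dimensional, and it is invariant under $\lambda_G$ because $z$ commutes with every $u_g$.

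It now suffices to prove that the left regular representation of an infinite group has no nonzero finite-dimensional invariant subspace. Set $V:=z\ell^2(G)$, $\rho:=\lambda_G|_V\colon G\to U(V)$, and $\xi:=z\delta_1\in V$; note $\xi\neq0$ since $\langle\xi,\delta_1\rangle=\tau_G(z)\geq\tau_G(e)>0$. Using $u_gz=zu_g$ and $z=z^*=z^2$, the matrix coefficient computes as
\[
\phi(g):=\langle\rho(g)\xi,\xi\rangle=\langle z\delta_g,\delta_1\rangle=\overline{\langle z\delta_1,\delta_g\rangle},
\]
so $\sum_{g\in G}|\phi(g)|^2=\|\xi\|^2=\tau_G(z)<\infty$, giving $\phi\in\ell^2(G)$ with $\phi(1)=\tau_G(z)>0$. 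As $V$ is finite-dimensional, $K:=\overline{\rho(G)}$ is a compact subgroup of $U(V)$, and $\Omega:=\{k\in K:\operatorname{Re}\langle k\xi,\xi\rangle>\tfrac12\tau_G(z)\}$ is a nonempty open neighbourhood of the identity; every $g$ with $\rho(g)\in\Omega$ satisfies $|\phi(g)|>\tfrac12\tau_G(z)$, so $\phi\in\ell^2(G)$ forces $\rho^{-1}(\Omega)$ to be finite. A nonempty open subset of an infinite compact group is infinite and meets the dense subgroup $\rho(G)$ infinitely often, so finiteness of $\rho^{-1}(\Omega)$ forces $K$ to be finite, whence $\ker\rho$ has finite index in $G$. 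Finally $\phi\equiv\tau_G(z)>0$ on $\ker\rho$, so $\ell^2$-summability forces $\ker\rho$ to be finite, and therefore $G$ is finite, as desired.

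I expect the main obstacle to be the crux implication $(1)\Rightarrow(4)$, and within it the step that manufactures a genuinely finite-dimensional invariant subspace from a minimal projection: this requires using finiteness of $L(G)$ to exclude a type $\mathrm I_\infty$ factor summand \emph{and} finiteness of its commutant $R(G)$ to bound the multiplicity, for only once $V$ is finite-dimensional does $\overline{\rho(G)}$ become compact and the concluding compactness-and-density argument apply.
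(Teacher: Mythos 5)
Your proposal is correct, and for the three soft equivalences it coincides with the paper's proof: $(2)\Leftrightarrow(3)$ via \autoref{prp:main}, $(2)\Leftrightarrow(4)$ via \autoref{prp:diffuseGNS} after identifying the GNS representation of $\tau_G$ with the left regular representation, and $(4)\Rightarrow(1)$ by the trivial contrapositive. The genuine difference is in $(1)\Rightarrow(4)$: the paper simply cites Dykema's \cite[Proposition~5.1]{Dyk93FreeProdHyperfinite} for the fact that $L(G)$ is diffuse when $G$ is infinite, whereas you prove it from scratch. Your argument is sound: a minimal projection $e\in L(G)$ has central support $z$ with $zL(G)$ a finite type~$\mathrm{I}$ factor (finiteness of $L(G)$ is what rules out $\mathrm{I}_\infty$), its commutant $zR(G)$ on $z\ell^2(G)$ is again a finite type~$\mathrm{I}$ factor, so $V=z\ell^2(G)$ is a nonzero finite-dimensional $\lambda_G$-invariant subspace; and your compactness-and-density argument correctly shows an infinite group admits no such subspace, using that the matrix coefficient $\phi(g)=\langle z\delta_g,\delta_1\rangle$ is square-summable yet bounded below on the finite-index subgroup $\ker\rho$. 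What the self-contained route buys is independence from the free-probability literature; what it costs is length. Incidentally, once you know $\dim V=nm<\infty$, there is a shortcut that avoids the compact-group argument entirely: since $z$ commutes with every $u_g$, the diagonal entries $\langle z\delta_g,\delta_g\rangle=\langle u_g^*zu_g\delta_1,\delta_1\rangle=\tau_G(z)$ are constant, so $nm=\operatorname{Tr}(z)=|G|\cdot\tau_G(z)$ with $\tau_G(z)\geq\tau_G(e)>0$ by faithfulness of $\tau_G$, forcing $|G|<\infty$ directly.
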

\begin{proof}
By \cite[Proposition~5.1]{Dyk93FreeProdHyperfinite}, (1) implies~(4).
Conversely, if $G$ is finite, then $L(G)$ is finite-dimensional and therefore not diffuse.
By \autoref{prp:main}, (2) and~(3) are equivalent.
Note that $L(G)$ is the weak*-closure of $C^*_\red(G)$ under the GNS-representation induced by $\tau_G$.
Therefore, (2) and~(4) are equivalent by \autoref{prp:diffuseGNS}.
\end{proof}

\begin{exa}
\label{exa:locFinGp}
Let $G$ be an infinite, discrete group.
Then $\tau_G$ admits a Haar unitary.
However, if $G$ is a torsion group (such as $G=\TT/\ZZ$), then none of the canonical unitaries $u_g$ ($g\in G$) is a Haar unitary since every element in $G$ has finite order.

If $G$ is locally finite, then even more is true:
There exists no Haar unitary for~$\tau_G$ in the group algebra $\CC[G]$.
Indeed, given $u\in\CC[G]$, since $u$ has finite support and since~$G$ is locally finite, there exists a finite subgroup $F\subseteq G$ such that $u$ belongs to $\CC[F]$.
But $\CC[F]$ is a finite-dimensional algebra and therefore does not contain Haar unitaries.
Thus, to find a Haar unitary for $\tau_G$, one really needs to go to the completion $C^*_\red(G)$ of $\CC[G]$. 
\end{exa}

\begin{prp}
\label{prp:grpNoFD}
Let $G$ be a discrete group.
Then the following are equivalent:
\begin{enumerate}
\item
$G$ is nonamenable;
\item
$C^*_\red(G)$ has no finite-dimensional representations;
\item
every trace on $C^*_\red(G)$ admits a Haar unitary.
\end{enumerate}
\end{prp}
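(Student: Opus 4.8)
The plan is to prove the equivalence of the three conditions for a discrete group $G$: nonamenability, the absence of finite-dimensional representations of $C^*_\red(G)$, and every trace on $C^*_\red(G)$ admitting a Haar unitary. The implication from (2) to (3) is already available: it is exactly \autoref{prp:charNoFDRep}, applied to the unital \ca{} $C^*_\red(G)$, which states that a unital \ca{} has no finite-dimensional representations if and only if each of its tracial states admits a Haar unitary. So the real content is the equivalence of the \emph{group-theoretic} condition (1) with the representation-theoretic condition (2), and then (3) follows for free.

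**The key steps**, in the order I would carry them out: First I would observe that (2) and (3) are equivalent by \autoref{prp:charNoFDRep}, disposing of one implication immediately. Then I would focus on the equivalence of (1) and (2). For the direction ``$G$ nonamenable $\Rightarrow$ $C^*_\red(G)$ has no finite-dimensional representations,'' I would argue by contraposition: if $C^*_\red(G)$ admits a finite-dimensional representation $\pi\colon C^*_\red(G)\to M_n(\CC)$, then composing the canonical map $G\to C^*_\red(G)$, $g\mapsto u_g$, with $\pi$ yields a finite-dimensional unitary representation of $G$ that factors through $C^*_\red(G)$. The crucial point is that a unitary representation of $G$ extends to (factors through) $C^*_\red(G)$ if and only if it is weakly contained in the left-regular representation $\lambda_G$; and a classical theorem (essentially Hulanicki's characterization of amenability) states that $G$ is amenable if and only if \emph{every} unitary representation is weakly contained in $\lambda_G$ --- equivalently, the trivial representation is weakly contained in $\lambda_G$. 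A finite-dimensional representation of $C^*_\red(G)$ produces a finite-dimensional, hence completely reducible, representation weakly contained in $\lambda_G$, and its irreducible summands are finite-dimensional representations weakly contained in $\lambda_G$. For the converse, ``$G$ amenable $\Rightarrow$ $C^*_\red(G)$ has a finite-dimensional representation,'' the point is that amenability forces the full and reduced group \ca{s} to coincide, $C^*(G)=C^*_\red(G)$, so the trivial representation $G\to\CC$ (sending every $g$ to $1$) descends to a one-dimensional representation of $C^*_\red(G)$.

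**The main obstacle** will be invoking the correct classical characterization of amenability cleanly. The cleanest route is: $G$ is amenable if and only if the trivial representation is weakly contained in $\lambda_G$, if and only if $C^*_\red(G)$ has a character (a one-dimensional representation). This gives that $G$ amenable is equivalent to $C^*_\red(G)$ admitting a one-dimensional representation, which is a special case of (2). The subtlety is upgrading ``no one-dimensional representation'' to ``no finite-dimensional representation'': I would argue that any finite-dimensional representation of $C^*_\red(G)$ restricts on the image of $G$ to a finite-dimensional unitary representation weakly contained in $\lambda_G$, decompose it into irreducibles, and note that if $G$ is nonamenable then no finite-dimensional representation can be weakly contained in $\lambda_G$ --- because a finite-dimensional representation weakly contained in $\lambda_G$ would, by a standard argument on matrix coefficients and the structure of almost-periodic functions, produce an invariant mean, contradicting nonamenability. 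Thus for nonamenable $G$ there are \emph{no} finite-dimensional representations at all. Conversely for amenable $G$ the trivial character is already a finite-dimensional representation. This establishes (1)$\Leftrightarrow$(2), and combined with \autoref{prp:charNoFDRep} gives the full cycle.
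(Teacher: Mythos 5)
Your overall structure matches the paper's: the equivalence of (2) and (3) is exactly \autoref{prp:charNoFDRep}, and the direction ``$G$ amenable $\Rightarrow$ $C^*_\red(G)$ has a one-dimensional representation,'' via weak containment of the trivial representation in $\lambda_G$, is also how the paper argues. Where you genuinely diverge is in the harder direction, namely that a finite-dimensional representation $\pi\colon C^*_\red(G)\to M_n(\CC)$ forces amenability. The paper's proof is operator-algebraic: it extends $\pi$ to a completely positive contraction $\tilde{\pi}\colon\Bdd(\ell^2(G))\to M_n(\CC)$ by Arveson's extension theorem, observes that each $u_g$ lies in the multiplicative domain of $\tilde{\pi}$, and concludes that $\tr_n\circ\tilde{\pi}$ restricted to $\ell^\infty(G)$ is an invariant mean. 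You instead stay in representation theory: $\pi$ corresponds to a finite-dimensional unitary representation of $G$ weakly contained in $\lambda_G$, and you claim no such representation exists when $G$ is nonamenable. That claim is correct and standard, but your justification (``a standard argument on matrix coefficients and the structure of almost-periodic functions'') is the weak link: every group admits an invariant mean on its almost periodic functions, so that alone cannot yield amenability, and it is not clear how your sketch produces an invariant mean on all of $\ell^\infty(G)$. The clean way to finish your route is Fell's absorption principle: if a finite-dimensional $\pi$ is weakly contained in $\lambda_G$, then $\pi\otimes\bar{\pi}$ is weakly contained in $\lambda_G\otimes\bar{\pi}\cong\lambda_G^{\oplus n}$, and $\pi\otimes\bar{\pi}$, acting by conjugation on $M_n(\CC)$ with the Hilbert--Schmidt inner product, fixes the normalized identity matrix; hence the trivial representation is weakly contained in $\lambda_G$ and $G$ is amenable by Hulanicki's theorem. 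With that substitution your argument is complete (and the decomposition into irreducible summands you mention becomes unnecessary). Both routes are legitimate; the paper's has the advantage of directly exhibiting the invariant mean on $\ell^\infty(G)$ without invoking weak-containment machinery, while yours is more classical in flavor.
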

\begin{proof}
The equivalence between~(1) and~(2) is well-known.
For the convenience of the reader, let us sketch the proof.
In one direction, if $G$ is amenable, then the trivial representation is weakly contained in $\lambda_G$, which induces a one-dimensional representation of $C^*_\red(G)$.
Conversely, if $C^*_\red(G) \to M_n(\CC)$ is a (unital) representations, then the composition with the unique tracial state on $M_n(\CC)$ is an amenable trace on $C^*_\red(G)$, which implies that $G$ is amenable by \cite[Proposition~6.3.2]{BroOza08Book}.

By \autoref{prp:charNoFDRep}, (2) and~(3) are equivalent.
\end{proof}

\begin{pgr}
Let $G$ be a discrete group.
Consider the following properties:
\begin{enumerate}
\item
$G$ contains a subgroup isomorphic to $\mathbb{F}_2$, the free group on two generators;
\item
$C^*_\red(G)$ contains a unital, simple, nonelementary sub-\ca{};
\item
every state on $C^*_\red(G)$ admits a Haar unitary;
\item
$C^*_\red(G)$ has no finite-dimensional representations;
\item
$G$ is nonamenable.
\end{enumerate}
Then the following implications hold:
\[
(1)\Rightarrow(2)\Rightarrow(3)\Rightarrow(4)\Leftrightarrow(5).
\]
Indeed, (1) implies that the simple, nonelementary \ca{} $C^*_\red(\mathbb{F}_2)$ unitally embeds into $C^*_\red(G)$;
by \autoref{prp:HaarSimpleSubalgebra}, (2) implies~(3);
and by \autoref{prp:grpNoFD}, (3) implies~(4), which is equivalent to~(5).

Recall that $G$ is \emph{$C^*$-simple} if $C^*_\red(G)$ is simple.
Obviously, every $C^*$-simple group satisfies~(2).
By \cite{OlsOsi14CsimpleGpsNoFreeSub}, there exist $C^*$-simple groups that have no noncyclic, free subgroups.
Hence, the implication '(1)$\Rightarrow$(2)' cannot be reversed.
What about the other implications?
\autoref{cnj:Haar} predicts that (4) implies~(3).
Does~(3) imply~(2)?
\end{pgr}

\section{Structure of reduced free products}
\label{sec:freeProd}

Let $A$ and $B$ be unital \ca{s} with faithful tracial states $\tau_A$ and $\tau_B$, respectively. 
The \emph{reduced free product} of $(A,\tau_A)$ and $(B,\tau_B)$ is the (unique) unital \ca{} $C$ with faithful tracial state $\tau_C$ and unital embeddings $A\subseteq C$ and $B\subseteq C$ such that $\tau_C$ restrict to the given traces on $A$ and $B$, such that $A$ and $B$ generated $C$ as a \ca{}, and such that $A$ and $B$ are \emph{free} with respect to $\tau_C$, that is, $\tau_C(c_1c_2\cdots c_n)=0$ whenever $\tau_C(c_j)=0$ for all $j$ and either $c_1,c_3,\ldots\in A$ and $c_2,c_4,\ldots\in B$, or vice versa;
see Lecture~7, and in particular Definition~7.10 in \cite{NicSpe06LecturesFreeProb} for details.

One can think of this construction as a generalization of the free product of groups:
Given discrete groups $G$ and $H$, the reduced free product of the reduced group \ca{s} $C^*_\red(G)$ and $C^*_\red(H)$ with respect to their canonical tracial states is naturally isomorphic to $C^*_\red(G\ast H)$.

It is a well-studied problem to determine when a reduced free product $C$ is simple or has stable rank one (that is, the invertible elements in $C$ are dense).
In \cite{Avi82FreeProd}, Avitzour introduced the condition, later named after him, that there are unitaries $u,v\in A$ and $w\in B$ satisfying
\[
\tau_A(u)=\tau_A(v)=\tau_A(uv)=0, \andSep
\tau_B(w)=0.
\]
By \cite[Proposition~3.1]{Avi82FreeProd}, Avitzour's condition implies that $C$ is simple and has a unique tracial state. 
By \cite[Theorem~3.8]{DykHaaRor97SRFreeProd}, Avitzour's condition also implies that $C$ has stable rank one.

It is clear that Avitzour's condition is satisfied if $\tau_A$ and $\tau_B$ admit Haar unitaries.
Thus, it follows from \autoref{prp:main} that the reduced free product of two \ca{s} with respect to diffuse (faithful) tracial states is a simple \ca{} of stable rank one and with unique tracial state.
Using a result of Dykema, \cite[Theorem~2]{Dyk99SimplSRFreeProd}, it even suffices that one trace is diffuse and the other algebra is nontrivial:

\begin{thm}
\label{prp:freeProdDiffuse}
Let $A$ and $B$ be unital \ca{s} with faithful tracial states $\tau_A$ and $\tau_B$, respectively.
Assume that $\tau_A$ is diffuse and that $B\neq\CC$.
Then the reduced free product of $(A,\tau_A)$ and $(B,\tau_B)$ is simple, has stable rank one and a unique tracial state.
\end{thm}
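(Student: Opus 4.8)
The plan is to deduce the statement directly from Dykema's structure theorem for reduced free products, \cite[Theorem~2]{Dyk99SimplSRFreeProd}. That theorem asserts precisely the three desired conclusions---simplicity, stable rank one, and uniqueness of the tracial state---under the hypothesis that there is a unital abelian sub-\ca{} $C(X)\subseteq A$ on which $\tau_A$ induces a diffuse measure, together with $B\neq\CC$. Thus the only work is to produce such a subalgebra from the abstract diffuseness of $\tau_A$, and this is exactly the content supplied by the main theorem.

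First I would apply \autoref{prp:main} to the diffuse tracial state $\tau_A$ on the unital \ca{} $A$: the equivalence of conditions~(1) and~(6) there yields a masa $C(X)\subseteq A$ such that $\tau_A$ induces a diffuse measure on $X$. (One could equally invoke \autoref{prp:charSingleHaar} to obtain a unital abelian sub-\ca{} with this property.) Since a masa in a unital algebra contains the unit, this subalgebra is unital, so it meets the hypothesis of \cite[Theorem~2]{Dyk99SimplSRFreeProd} on the pair $(A,\tau_A)$. Feeding this subalgebra, the faithfulness of $\tau_A$ and $\tau_B$ (assumed in the statement and needed to form the reduced free product), and the hypothesis $B\neq\CC$ into Dykema's theorem then delivers all three conclusions simultaneously.

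The subtlety here is entirely one of matching hypotheses rather than of genuine difficulty: the hard part---manufacturing a diffuse abelian subalgebra, equivalently a Haar unitary, out of an arbitrary diffuse trace---was already settled in \autoref{prp:main}, and the reduced free product inherits its good structure from that one abelian piece. I note for contrast that when \emph{both} $\tau_A$ and $\tau_B$ are diffuse one can avoid Dykema's theorem altogether and verify Avitzour's condition by hand: taking $u\in A$ a Haar unitary, $v:=u^2$, and $w\in B$ a Haar unitary gives $\tau_A(u)=\tau_A(v)=\tau_A(uv)=0$ and $\tau_B(w)=0$, whence simplicity, unique trace, and stable rank one follow from \cite[Proposition~3.1]{Avi82FreeProd} and \cite[Theorem~3.8]{DykHaaRor97SRFreeProd}. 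The present generality, assuming only $B\neq\CC$, is what forces the appeal to \cite[Theorem~2]{Dyk99SimplSRFreeProd}.
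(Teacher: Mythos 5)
Your proposal is correct and follows essentially the same route as the paper: invoke \autoref{prp:main} to extract a unital abelian sub-\ca{} $C(X)\subseteq A$ on which $\tau_A$ induces a diffuse measure, then apply \cite[Theorem~2]{Dyk99SimplSRFreeProd}. The extra remark on verifying Avitzour's condition directly when both traces are diffuse matches the paper's own discussion preceding the theorem and is not needed for the proof.
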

\begin{proof}
By \autoref{prp:main}, $A$ contains a unital, commutative sub-\ca{} $C(X)$ such that $\tau_A$ induces a diffuse measure on $X$.
This verifies the assumptions of \cite[Theorem~2]{Dyk99SimplSRFreeProd}, which proves the result.
\end{proof}

\begin{cor}
\label{prp:freeProdSimpleTracial}
Let $A$ and $B$ be unital, simple \ca{s} with tracial states $\tau_A$ and $\tau_B$, respectively.
Assume that $A\neq\CC$ and $B\neq\CC$.
Then the reduced free product of $(A,\tau_A)$ and $(B,\tau_B)$ is simple, has stable rank one and a unique tracial state.
\end{cor}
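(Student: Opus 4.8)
The plan is to reduce \autoref{prp:freeProdSimpleTracial} to \autoref{prp:freeProdDiffuse}. The statement of \autoref{prp:freeProdDiffuse} requires three things: that $\tau_A$ and $\tau_B$ are \emph{faithful}, that $\tau_A$ is diffuse, and that $B\neq\CC$. Since $A$ and $B$ are simple and unital, faithfulness and diffuseness will both come essentially for free. First I would observe that any nonzero trace on a simple \ca{} is automatically faithful: if $a\in A_+$ satisfies $\tau_A(a)=0$, then the set $L_{\tau_A}\cap L_{\tau_A}^*$ discussed before \autoref{prp:gapGivesIdeal} is a hereditary sub-\ca{} on which $\tau_A$ vanishes, and by traciality the ideal it generates is a proper ideal on which $\tau_A$ vanishes; simplicity forces $a=0$. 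So $\tau_A$ and $\tau_B$ are faithful tracial states.

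Next I would check diffuseness of $\tau_A$. Here the main point is that a unital, simple \ca{} with $A\neq\CC$ is automatically nonelementary: the only elementary unital \ca{s} are the matrix algebras $M_n(\CC)$, and among these only $M_1(\CC)=\CC$ is excluded by hypothesis, but $M_n(\CC)$ for $n\geq 2$ carries no diffuse trace. I would therefore split into two cases. If $A$ is infinite-dimensional, then being unital and simple it is nonelementary, and \autoref{prp:traceSimple} gives directly that $\tau_A$ is diffuse (indeed admits a Haar unitary). If instead $A$ is finite-dimensional, then by simplicity $A\cong M_n(\CC)$ for some $n\geq 1$, and since $A\neq\CC$ we have $n\geq 2$; but the unique trace on $M_n(\CC)$ does not vanish on minimal projections, so it is \emph{not} diffuse. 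This shows the finite-dimensional case needs separate handling, and it is the only genuine obstacle in the argument.

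The resolution is symmetry: the reduced free product is symmetric in $(A,\tau_A)$ and $(B,\tau_B)$, so I may interchange the roles of $A$ and $B$. If $A$ is finite-dimensional (hence $\cong M_n(\CC)$ with $n\geq 2$) but $B$ is infinite-dimensional, I would apply the previous paragraph with the roles reversed, using that $\tau_B$ is diffuse and $A\neq\CC$. Thus the only remaining case is when \emph{both} $A$ and $B$ are finite-dimensional matrix algebras $M_m(\CC)$ and $M_n(\CC)$ with $m,n\geq 2$. In this case neither trace is diffuse, so \autoref{prp:freeProdDiffuse} does not apply directly, and I expect this to be the crux of the proof. Here I would fall back on Avitzour's condition as recalled in the opening of \autoref{sec:freeProd}: it suffices to produce unitaries $u,v\in A$ with $\tau_A(u)=\tau_A(v)=\tau_A(uv)=0$ and a unitary $w\in B$ with $\tau_B(w)=0$. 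In $M_m(\CC)$ with $m\geq 2$ one can write down explicit unitaries of trace zero whose product also has trace zero—for instance a diagonal unitary with $m$-th roots of unity as entries together with a cyclic permutation unitary—and similarly a trace-zero unitary in $M_n(\CC)$; this verifies Avitzour's condition, which yields simplicity, stable rank one, and unique trace by \cite[Proposition~3.1]{Avi82FreeProd} and \cite[Theorem~3.8]{DykHaaRor97SRFreeProd}.

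Assembling the three cases—$A$ infinite-dimensional, $B$ infinite-dimensional (by symmetry), and both finite-dimensional—covers all possibilities and completes the proof. The only nonroutine step is the matrix-algebra case, where the diffuseness route is unavailable and one must instead exhibit the explicit Avitzour unitaries.
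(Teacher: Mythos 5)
Your proposal is correct and follows essentially the same route as the paper: reduce to \autoref{prp:freeProdDiffuse} via \autoref{prp:traceSimple} when either algebra is infinite-dimensional, and verify Avitzour's condition directly for $M_m(\CC)\ast M_n(\CC)$ with $m,n\geq 2$ (the paper cites \cite[Proposition~4.1(iv)]{DykHaaRor97SRFreeProd} for this last step rather than writing out the unitaries). Your explicit remark that traces on simple \ca{s} are automatically faithful is a point the paper leaves implicit, and is a worthwhile addition.
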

\begin{proof}
If $A$ is infinite-dimensional, then $\tau_A$ is diffuse by \autoref{prp:traceSimple} and the result follows from \autoref{prp:freeProdDiffuse}.
The same argument applies if $B$ is infinite-dimensional.
If both $A$ and $B$ are finite-dimensional, then $A\cong M_m(\CC)$ and $B\cong M_n(\CC)$ for some $m,n\geq 2$, and in this case one can directly verify that Avitzour's condition is satisfied;
see \cite[Proposition~4.1(iv)]{DykHaaRor97SRFreeProd}
\end{proof}

\begin{cor}
\label{prp:freeProdSimpleSR1}
The class of unital, simple, stable rank one \ca{s} with unique tracial state is closed under reduced free products.
\end{cor}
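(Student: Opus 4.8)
The plan is to deduce this closure statement directly from the results already established, treating it as a special case of the preceding corollary together with a preservation property of the class in question. Let $A$ and $B$ be unital, simple \ca{s} of stable rank one, each with a unique tracial state $\tau_A$ and $\tau_B$, respectively. Since stable rank one \ca{s} are unital and stably finite, they are not isomorphic to $\CC$ unless they are trivial in a way we can exclude; more to the point, I would first dispose of the degenerate cases. If $A\cong\CC$ (or $B\cong\CC$), then the reduced free product of $(A,\tau_A)$ and $(B,\tau_B)$ is just $B$ (respectively $A$) itself, since $\CC$ contributes nothing to the free product, and the conclusion is immediate because $B$ (respectively $A$) already lies in the class. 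So I may assume $A\neq\CC$ and $B\neq\CC$.

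With that reduction in place, the main input is \autoref{prp:freeProdSimpleTracial}: since $A$ and $B$ are unital, simple, and both different from $\CC$, the reduced free product $C$ of $(A,\tau_A)$ and $(B,\tau_B)$ is simple, has stable rank one, and has a unique tracial state. This is precisely the list of properties defining the class, so $C$ again belongs to the class of unital, simple, stable rank one \ca{s} with unique tracial state. Thus the class is closed under forming the reduced free product, which is the assertion.

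The one point requiring care is that \autoref{prp:freeProdSimpleTracial} takes as input \emph{tracial states} $\tau_A$ and $\tau_B$, whereas the statement about the class refers to \ca{s} with a \emph{unique} tracial state. These are compatible: a unital \ca{} with a unique tracial state has, in particular, a distinguished tracial state, and it is with respect to these canonical traces that the reduced free product is formed. Conversely, \autoref{prp:freeProdSimpleTracial} guarantees that the resulting \ca{} $C$ has a unique tracial state, so there is no ambiguity in the trace on $C$ and the output genuinely lies in the class. I expect no real obstacle here; the only mild subtlety is the bookkeeping around the trivial factor $\CC$ and the observation that the uniqueness of the trace is both a hypothesis one feeds into the free product construction and a conclusion one reads off from \autoref{prp:freeProdSimpleTracial}. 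The argument is essentially a restatement of the previous corollary once the trivial cases are handled.
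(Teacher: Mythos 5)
Your proof is correct and follows exactly the paper's argument: dispose of the degenerate case $A\cong\CC$ or $B\cong\CC$ by noting the reduced free product is then just the other algebra, and otherwise apply \autoref{prp:freeProdSimpleTracial}. The parenthetical remark that stable rank one algebras ``are not isomorphic to $\CC$ unless trivial'' is a red herring ($\CC$ does lie in the class), but you handle that case correctly anyway, so nothing is lost.
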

\begin{proof}
Let $A$ and $B$ be two \ca{s} in the considered class.
If $A\cong\CC$, then the reduced free product is isomorphic to $B$, which belongs to the class, and similarly if $B\cong\CC$.
We may therefore assume that $A\neq\CC$ and $B\neq\CC$.
Now the result follows from \autoref{prp:freeProdSimpleTracial}.
\end{proof}

\begin{pgr}
Let $A$ and $B$ be unital \ca{s} with faithful states $\varphi$ and $\psi$, respectively.
Let $(C,\gamma)$ be the reduced free product of $(A,\varphi)$ and $(B,\psi)$, which is defined analogously to the tracial setting.
One can show that $\gamma$ is tracial if and only if~$\varphi$ and~$\psi$ are.

The \emph{centralizer} of $A$ with respect to $\varphi$ is defined as
\[
A_\varphi := \big\{ a\in A : \varphi(ab)=\varphi(ba) \text{ for all } b\in A \big\}.
\]
Note that $A_\varphi$ is a unital sub-\ca{} of $A$, and the restriction of $\varphi$ to $A_\varphi$ is tracial.
In this setting, Avitzour's condition is that there exist unitaries $u,v\in A_\varphi$ and $w\in B_\psi$ satisfying
\[
\varphi(u)=\varphi(v)=\varphi(uv)=0, \andSep
\psi(w)=0.
\]

Avitzour's condition still implies that the reduced free product is simple.
By \cite[Proposition~3.2]{Dyk99SimplSRFreeProd}, $C$ is also simple if $B\neq\CC$ and if there is a unital sub-\ca{} $C(X)\subseteq A_\varphi$ such that $\varphi$ induces a diffuse measure on $X$.
By \autoref{prp:main}, the condition on $A_\varphi$ is satisfied if and only if $\varphi|_{A_\varphi}$ is a diffuse trace.
In particular, we obtain that $C$ is simple if $\varphi$ is a diffuse trace on $A_\varphi$ and $B\neq\CC$.
\end{pgr}

\begin{prp}
Let $A$ and $B$ be unital, simple, nonelementary \ca{s} with states $\varphi$ and $\psi$, respectively, and let $(C,\gamma)$ be their reduced free product. 
If $\varphi$ and $\psi$ are tracial, then $C$ has stable rank one.
If $\varphi$ or $\psi$ is not tracial, then $C$ is properly infinite.
\end{prp}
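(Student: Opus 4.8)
I would prove the two halves by entirely different routes.

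For the tracial half, observe first that a tracial state on a unital, simple \ca{} is automatically faithful: the set $\{a : \varphi(a^*a)=0\}$ is a closed left ideal which, being self-adjoint by traciality, is a proper two-sided ideal and hence trivial by simplicity. Thus $\varphi$ and $\psi$ are faithful tracial states, and since $A$ and $B$ are nonelementary we have $A\neq\CC$ and $B\neq\CC$. The assertion that $C$ has stable rank one is then precisely \autoref{prp:freeProdSimpleTracial}, which in fact also yields simplicity and a unique tracial state; nothing beyond the results already established in this paper is required here.

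The non-tracial half is the substantial one. Interchanging the factors if necessary, I may assume $\varphi$ is not tracial; since $\gamma|_A=\varphi$, the free-product state $\gamma$ is then not tracial either. The goal is to produce two isometries $s_1,s_2\in C$ with $s_1^*s_1=s_2^*s_2=1$ and $s_1^*s_2=0$, which is exactly proper infiniteness of $1_C$. Non-traciality of $\varphi$ furnishes a centered element $a\in A$ (that is, $\varphi(a)=0$) with $\varphi(a^*a)\neq\varphi(aa^*)$; replacing $a$ by $a^*$ and rescaling, I may assume $\varphi(a^*a)=1>\varphi(aa^*)$. The conceptual point is then visible in the GNS representation of $(C,\gamma)$ on the free-product Hilbert space: a centered element of $A$ acts by \emph{pure creation}, with no annihilation correction, on the closed span of the vacuum together with the reduced words beginning with a letter from $B$. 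The creation and annihilation parts of the field operator $\pi_\gamma(a)$ carry respective weights $\varphi(a^*a)$ and $\varphi(aa^*)$, and it is precisely the strict inequality between them---unavailable in the tracial case, where the two are balanced---that lets a genuine \emph{non-unitary} isometry be distilled from the creation part, with orthogonal copies produced from centered elements of $B$ whose GNS images are orthonormal (possible since $B$ is infinite-dimensional).

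Made rigorous, this is the free-probabilistic mechanism underlying the purely-infinite free-product theorems of Dykema and R\o rdam. The inputs they require are present here: both factors are $\neq\CC$ and contain Haar unitaries by \autoref{prp:HaarSimple} (as $A$ and $B$ are simple and nonelementary), while the combined state $\gamma$ is not tracial. This yields that $C$ is properly infinite---indeed purely infinite and simple. It is moreover consistent with \autoref{exa:HaarNotImpliesNWS}: a non-tracial functional may well admit a Haar unitary, so the abundance of Haar unitaries guaranteed by \autoref{prp:HaarSimple} does not obstruct---and in fact feeds into---the infinite behaviour.

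I expect the main obstacle to be this construction at the level of operators rather than moments: one must isolate the pure-creation summand from the full decomposition of $\pi_\gamma(a)$ into creation, diagonal and annihilation parts, and upgrade the scalar identities supplied by freeness to the exact operator relations $s_i^*s_i=1$ and $s_1^*s_2=0$ on the free-product Hilbert space. A secondary subtlety occurs when \emph{both} $\varphi$ and $\psi$ fail to be tracial, since then the centralizers $A_\varphi$ and $B_\psi$ may be as small as $\CC 1$; the simplicity criterion recalled just before the proposition, which asks for a diffuse trace on a centralizer (via \autoref{prp:main}), is then unavailable, and one must fall back on the intrinsic Cuntz-comparison argument internal to the Dykema--R\o rdam framework. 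Everything else is bookkeeping resting on \autoref{prp:freeProdSimpleTracial} and \autoref{prp:HaarSimple}.
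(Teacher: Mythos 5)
You take essentially the same route as the paper: the tracial half is exactly \autoref{prp:freeProdSimpleTracial}, and the non-tracial half reduces, via the Haar unitaries supplied by \autoref{prp:HaarSimple}, to the Dykema--R{\o}rdam theorem on projections in free product \ca{s} (the paper cites \cite[Theorem~4]{DykRor98ProjFreeProd}), with the same key observation that these Haar unitaries need not lie in the centralizers. The only caveat is your parenthetical assertion that $C$ is moreover purely infinite and simple, which that theorem does not provide and which the statement does not require; the heuristic creation/annihilation discussion is likewise not needed once the external theorem is invoked.
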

\begin{proof}
The first statement follows from \autoref{prp:freeProdSimpleTracial}.
To show the second statement, assume that  $\varphi$ or $\psi$ is not tracial.
Then $\gamma$ is not tracial either.
By \autoref{prp:HaarSimple}, $\varphi$ and $\psi$ admit Haar unitaries.
They do not necessarily lie in the centralizers of $\varphi$ and $\psi$, but this is also not required to apply \cite[Theorem~4]{DykRor98ProjFreeProd}, which gives that $C$ is properly infinite.
\end{proof}

\begin{pbm}
Describe when the centralizer of a state contains a Haar unitary. 
\end{pbm}

\begin{rmk}
Let $M$ be a von Neumann algebra, and let $\varphi\colon M\to\CC$ be a faithful, normal state.
The centralizer $M_\varphi$ (defined as in the $C^*$-case) is a von Neumann subalgebra.
If $M$ is a factor of type~$\mathrm{II}$ or type~$\mathrm{III}_\lambda$ for $\lambda\in[0,1)$, then $M_\varphi$ is diffuse;
see \cite[Lemma~2.2]{MarUed14GeomVNPreduals}.
In this case, it follows that $M_\varphi$ contains a Haar unitary.

In general, this is not true.
Indeed, as shown in Section~3 of \cite{HerTak70StatesAutOA}, there exists a faithful, normal state $\varphi$ on the (unique) hyperfinite $\mathrm{III}_1$~factor such that $M_\varphi=\CC 1$;
see also \cite[Example~2.7]{MarUed14GeomVNPreduals}.
The construction actually shows that there exists a state on the CAR algebra with trivial centralizer.
\end{rmk}


\begin{thebibliography}{DHR97}

\bibitem[Ake69]{Ake69StoneWeierstrass}
\bgroup\scshape{}C.~A. Akemann\egroup{}, The general {S}tone-{W}eierstrass
  problem,  \emph{J. Functional Analysis} \textbf{4} (1969), 277--294.

\bibitem[Ake71]{Ake71GelfandRepr}
\bgroup\scshape{}C.~A. Akemann\egroup{}, A {G}elfand representation theory for
  \ca{s},  \emph{Pacific J. Math.} \textbf{39} (1971), 1--11.

\bibitem[AB15]{AkeBic15HerSubalgLattice}
\bgroup\scshape{}C.~A. Akemann\egroup{} and \bgroup\scshape{}T.~Bice\egroup{},
  Hereditary {$\rm C^*$}-subalgebra lattices,  \emph{Adv. Math.} \textbf{285}
  (2015), 101--137.

\bibitem[AW08]{AkeWea08PureStateNotMultAnyMasa}
\bgroup\scshape{}C.~A. Akemann\egroup{} and
  \bgroup\scshape{}N.~Weaver\egroup{}, {$\mathcal{B}(H)$} has a pure state that
  is not multiplicative on any masa,  \emph{Proc. Natl. Acad. Sci. USA}
  \textbf{105} (2008), 5313--5314.

\bibitem[APT20]{AntPerThi20CuntzUltraproducts}
\bgroup\scshape{}R.~Antoine\egroup{}, \bgroup\scshape{}F.~Perera\egroup{}, and
  \bgroup\scshape{}H.~Thiel\egroup{}, Cuntz semigroups of ultraproduct \ca{s},
  \emph{J. Lond. Math. Soc. (2)} \textbf{102} (2020), 994--1029.

\bibitem[Avi82]{Avi82FreeProd}
\bgroup\scshape{}D.~Avitzour\egroup{}, Free products of \ca{s},  \emph{Trans.
  Amer. Math. Soc.} \textbf{271} (1982), 423--435.

\bibitem[Bla06]{Bla06OpAlgs}
\bgroup\scshape{}B.~Blackadar\egroup{}, \emph{Operator algebras},
  \emph{Encyclopaedia of Mathematical Sciences} \textbf{122}, Springer-Verlag,
  Berlin, 2006, Theory of \ca{s} and von Neumann algebras, Operator Algebras
  and Non-commutative Geometry, III.

\bibitem[BO08]{BroOza08Book}
\bgroup\scshape{}N.~P. Brown\egroup{} and \bgroup\scshape{}N.~Ozawa\egroup{},
  \emph{\ca{s} and finite-dimensional approximations}, \emph{Graduate Studies
  in Mathematics} \textbf{88}, American Mathematical Society, Providence, RI,
  2008.

\bibitem[DH01]{DykHaa01InvSubspCircularOps}
\bgroup\scshape{}K.~Dykema\egroup{} and \bgroup\scshape{}U.~Haagerup\egroup{},
  Invariant subspaces of {V}oiculescu's circular operator,  \emph{Geom. Funct.
  Anal.} \textbf{11} (2001), 693--741.

\bibitem[DR98]{DykRor98ProjFreeProd}
\bgroup\scshape{}K.~J. Dykema\egroup{} and
  \bgroup\scshape{}M.~R{\o}rdam\egroup{}, Projections in free product \ca{s},
  \emph{Geom. Funct. Anal.} \textbf{8} (1998), 1--16.

\bibitem[Dyk93]{Dyk93FreeProdHyperfinite}
\bgroup\scshape{}K.~Dykema\egroup{}, Free products of hyperfinite von {N}eumann
  algebras and free dimension,  \emph{Duke Math. J.} \textbf{69} (1993),
  97--119.

\bibitem[DHR97]{DykHaaRor97SRFreeProd}
\bgroup\scshape{}K.~Dykema\egroup{}, \bgroup\scshape{}U.~Haagerup\egroup{}, and
  \bgroup\scshape{}M.~R{\o}rdam\egroup{}, The stable rank of some free product
  \ca{s},  \emph{Duke Math. J.} \textbf{90} (1997), 95--121.

\bibitem[Dyk99]{Dyk99SimplSRFreeProd}
\bgroup\scshape{}K.~J. Dykema\egroup{}, Simplicity and the stable rank of some
  free product \ca{s},  \emph{Trans. Amer. Math. Soc.} \textbf{351} (1999),
  1--40.

\bibitem[ER06]{EllRor06Perturb}
\bgroup\scshape{}G.~A. Elliott\egroup{} and
  \bgroup\scshape{}M.~R{\o}rdam\egroup{}, Perturbation of {H}ausdorff moment
  sequences, and an application to the theory of \ca{s} of real rank zero,  in
  \emph{Operator {A}lgebras: {T}he {A}bel {S}ymposium 2004}, \emph{Abel Symp.}
  \textbf{1}, Springer, Berlin, 2006, pp.~97--115.

\bibitem[GK18]{GhaKos18NCCantorBendixson}
\bgroup\scshape{}S.~Ghasemi\egroup{} and
  \bgroup\scshape{}P.~Koszmider\egroup{}, Noncommutative {C}antor-{B}endixson
  derivatives and scattered \ca{s},  \emph{Topology Appl.} \textbf{240} (2018),
  183--209.

\bibitem[HT70]{HerTak70StatesAutOA}
\bgroup\scshape{}R.~H. Herman\egroup{} and
  \bgroup\scshape{}M.~Takesaki\egroup{}, States and automorphism groups of
  operator algebras,  \emph{Comm. Math. Phys.} \textbf{19} (1970), 142--160.

\bibitem[Jen77]{Jen77ScatteredCAlg}
\bgroup\scshape{}H.~E.~n. Jensen\egroup{}, Scattered \ca{s},  \emph{Math.
  Scand.} \textbf{41} (1977), 308--314.

\bibitem[Jen78]{Jen78ScatteredCAlg2}
\bgroup\scshape{}H.~E.~n. Jensen\egroup{}, Scattered \ca{s}. {II},  \emph{Math.
  Scand.} \textbf{43} (1978), 308--310 (1979).

\bibitem[KR00]{KirRor00PureInf}
\bgroup\scshape{}E.~Kirchberg\egroup{} and
  \bgroup\scshape{}M.~R{\o}rdam\egroup{}, Non-simple purely infinite \ca{s},
  \emph{Amer. J. Math.} \textbf{122} (2000), 637--666.

\bibitem[MU14]{MarUed14GeomVNPreduals}
\bgroup\scshape{}M.~Mart\'{\i}n\egroup{} and \bgroup\scshape{}Y.~Ueda\egroup{},
  On the geometry of von {N}eumann algebra preduals,  \emph{Positivity}
  \textbf{18} (2014), 519--530.

\bibitem[NS97]{NicSpe97RdiagonalPairs}
\bgroup\scshape{}A.~Nica\egroup{} and \bgroup\scshape{}R.~Speicher\egroup{},
  {$R$}-diagonal pairs---a common approach to {H}aar unitaries and circular
  elements,  in \emph{Free probability theory ({W}aterloo, {ON}, 1995)},
  \emph{Fields Inst. Commun.} \textbf{12}, Amer. Math. Soc., Providence, RI,
  1997, pp.~149--188.

\bibitem[NS06]{NicSpe06LecturesFreeProb}
\bgroup\scshape{}A.~Nica\egroup{} and \bgroup\scshape{}R.~Speicher\egroup{},
  \emph{Lectures on the combinatorics of free probability}, \emph{London
  Mathematical Society Lecture Note Series} \textbf{335}, Cambridge University
  Press, Cambridge, 2006.

\bibitem[OO14]{OlsOsi14CsimpleGpsNoFreeSub}
\bgroup\scshape{}A.~Y. Olshanskii\egroup{} and \bgroup\scshape{}D.~V.
  Osin\egroup{}, {$C^*$}-simple groups without free subgroups,  \emph{Groups
  Geom. Dyn.} \textbf{8} (2014), 933--983.

\bibitem[Ols71]{Ols71SelfadjLattice}
\bgroup\scshape{}M.~P. Olson\egroup{}, The selfadjoint operators of a von
  {N}eumann algebra form a conditionally complete lattice,  \emph{Proc. Amer.
  Math. Soc.} \textbf{28} (1971), 537--544.

\bibitem[ORT11]{OrtRorThi11CuOpenProj}
\bgroup\scshape{}E.~Ortega\egroup{}, \bgroup\scshape{}M.~R{\o}rdam\egroup{},
  and \bgroup\scshape{}H.~Thiel\egroup{}, The {C}untz semigroup and comparison
  of open projections,  \emph{J. Funct. Anal.} \textbf{260} (2011), 3474--3493.

\bibitem[Ped71]{Ped71AtomicDiffuse}
\bgroup\scshape{}G.~K. Pedersen\egroup{}, Atomic and diffuse functionals on a
  \ca{},  \emph{Pacific J. Math.} \textbf{37} (1971), 795--800.

\bibitem[Ped79]{Ped79CAlgsAutGp}
\bgroup\scshape{}G.~K. Pedersen\egroup{}, \emph{\ca{s} and their automorphism
  groups}, \emph{London Mathematical Society Monographs} \textbf{14}, Academic
  Press, Inc. [Harcourt Brace Jovanovich, Publishers], London-New York, 1979.

\bibitem[Pop87]{Pop87CommutantModuloCompacts}
\bgroup\scshape{}S.~Popa\egroup{}, The commutant modulo the set of compact
  operators of a von {N}eumann algebra,  \emph{J. Funct. Anal.} \textbf{71}
  (1987), 393--408.

\bibitem[Pop95]{Pop95FreeIndepSeq}
\bgroup\scshape{}S.~Popa\egroup{}, Free-independent sequences in type {${\rm
  II}_1$} factors and related problems, no. 232, 1995, Recent advances in
  operator algebras (Orl\'{e}ans, 1992), pp.~187--202.

\bibitem[Tak02]{Tak02ThyOpAlgs1}
\bgroup\scshape{}M.~Takesaki\egroup{}, \emph{Theory of operator algebras. {I}},
  \emph{Encyclopaedia of Mathematical Sciences} \textbf{124}, Springer-Verlag,
  Berlin, 2002, Reprint of the first (1979) edition, Operator Algebras and
  Non-commutative Geometry, 5.

\end{thebibliography}

\providecommand{\bysame}{\leavevmode\hbox to3em{\hrulefill}\thinspace}
\providecommand{\noopsort}[1]{}
\providecommand{\mr}[1]{\href{http://www.ams.org/mathscinet-getitem?mr=#1}{MR~#1}}
\providecommand{\zbl}[1]{\href{http://www.zentralblatt-math.org/zmath/en/search/?q=an:#1}{Zbl~#1}}
\providecommand{\jfm}[1]{\href{http://www.emis.de/cgi-bin/JFM-item?#1}{JFM~#1}}
\providecommand{\arxiv}[1]{\href{http://www.arxiv.org/abs/#1}{arXiv~#1}}
\providecommand{\doi}[1]{\url{http://dx.doi.org/#1}}
\providecommand{\MR}{\relax\ifhmode\unskip\space\fi MR }
\providecommand{\MRhref}[2]{%
  \href{http://www.ams.org/mathscinet-getitem?mr=#1}{#2}
}
\providecommand{\href}[2]{#2}

\end{document}